\newcommand{\doublewidetilde}[1]{{%
  \mathpalette\double@widetilde{#1}%
}}
\newcommand{\double@widetilde}[2]{%
  \sbox\z@{$\m@th#1\widetilde{#2}$}%
  \ht\z@=.9\ht\z@
  \widetilde{\box\z@}%
}
\newcommand\seq{\, = \,}
\newcommand\define{\mathrel{\ := \ }}
\newcommand\ede{\define}
\newcommand\where{\, \vert \ }
\newcommand\esssup{\operatorname{ess-sup}}
\newcommand{\CC}{\mathbb C}
\newcommand{\NN}{\mathbb N}
\newcommand{\RR}{\mathbb R}
\newcommand{\ZZ}{\mathbb Z}
\newcommand{\CI}{{\mathcal C}^{\infty}}
\newcommand{\CIc}{{\mathcal C}^{\infty}_{\text{c}}}
\newcommand{\CIb}{{\mathcal C}^{\infty}_{\text{b}}}
\newcommand\pa{\partial}
\newcommand{\ord}{\operatorname{ord}}
\newcommand{\Hom}{\operatorname{Hom}}
\newcommand{\End}{\operatorname{End}}
\newcommand{\Diff}{\operatorname{Diff}}
\newcommand{\dvol}{\,\operatorname{dvol}}
\renewcommand{\div}{\operatorname{div}}
\newcommand{\BiDiff}{bi{\mbox{-}}\!\operatorname{Diff}}
\newcommand\tr{\mathop{\mathrm{tr}}}
\newcommand{\mfka}{\mathfrak a}
\newcommand{\mfkb}{\mathfrak b}
\newcommand{\maD}{\mathcal D}
\newcommand{\maF}{\mathcal F}
\newcommand{\maU}{\mathcal U}
\newcommand{\maW}{\mathcal W}
\newcommand\vect[1]{\mathbf{#1}}
\newtheorem{theorem}{Theorem}[section]
\newtheorem{proposition}[theorem]{Proposition}
\newtheorem{corollary}[theorem]{Corollary}
\newtheorem{lemma}[theorem]{Lemma}
\newtheorem{notation}[theorem]{Notations}
\theoremstyle{definition}
\newtheorem{definition}[theorem]{Definition}
\theoremstyle{remark}
\newtheorem{remark}[theorem]{Remark}
\newtheorem{example}[theorem]{Example}
\author[M. Kohr]{Mirela Kohr} \address{Faculty of Mathematics and
  Computer Science, Babe\c{s}-Bolyai University, 1 M. Kog\u alniceanu
  Str., 400084 Cluj-Napoca, Romania} \email{mkohr@math.ubbcluj.ro}
\author[V. Nistor]{Victor Nistor} \address{Universit\'{e} de Lorraine,
  UFR MIM, Ile du Saulcy, CS 50128, 57045 METZ, France and
  Inst. Math. Romanian Acad.  PO BOX 1-764, 014700 Bucharest Romania}
\email{victor.nistor@univ-lorraine.fr}
\thanks{M.K. has been partially supported by
  AGC35124/31.10.2018. V.N. has been partially supported by
  ANR-14-CE25-0012-01.\\
Manuscripts available from \textbf{http:{\scriptsize
    //}iecl.univ-lorraine.fr{\scriptsize
    /}$\tilde{}$Victor.Nistor{\scriptsize /}}\\
AMS Subject classification (2010): 
58J32 (primary), 47L80, 58J05, 58H05, 46L87}
\begin{document}

\title[Sobolev spaces and $\nabla$-differential operators]{Sobolev
  spaces and $\nabla$-differential operators on manifolds I: basic
  properties and weighted spaces}

\begin{abstract}
We study {\em $\nabla$-Sobolev spaces} and {\em $\nabla$-differential
  operators} with coefficients in general Hermitian vector bundles on
Riemannian manifolds, stressing a coordinate free approach that uses 
connections (which are typically denoted $\nabla$). These concepts 
arise naturally
from Partial Differential Equations, including some that are formulated
on plain Euclidean domains, such as
the weighted Sobolev spaces used to study PDEs on singular
domains. We prove several basic properties of the $\nabla$-Sobolev
spaces and of the $\nabla$-differential operators on general
manifolds. For instance, we prove mapping properties for our differential
operators and independence of the $\nabla$-Sobolev spaces on the choices of
the connection $\nabla$ with respect to totally bounded
perturbations. We introduce a {\em Fr\'echet finiteness condition}
(FFC) for totally bounded vector fields, which is satisfied, for
instance, by open subsets of manifolds with bounded geometry. When
(FFC) is satisfied, we provide several equivalent definitions of our
$\nabla$-Sobolev spaces and of our $\nabla$-differential operators. We
examine in more detail the particular case of domains in the Euclidean
space, including the case of weighted Sobolev spaces. We also introduce
and study the notion of a {\em $\nabla$-bidifferential} operator (a bilinear 
version of differential operators), obtaining results similar to those obtained 
for $\nabla$-differential operators.
Bilinear differential operators are necessary for a global, geometric discussion of
variational problems. We tried to write the paper so that it is accessible to a
large audience.
\end{abstract}

\maketitle

\tableofcontents

\section{Introduction}\label{sec.1}

Even if one is interested only in Partial Differential Equations
(PDEs) on Euclidean domains, one is quickly lead to consider also
equations on vector bundles on manifolds, as noticed in many earlier works, including
\cite{AronszajnMilgram} and \cite{Browder61}. Sobolev spaces on
manifolds are useful, for instance, for the investigation of the
weighted Sobolev spaces that arise in the study of PDEs on polyhedral
domains. Vector bundles arise when one considers systems, such
as the elasticity and Stokes systems. This motivates our interest in Sobolev spaces and
differential operators with coefficients in a general Hermitian vector
bundle $E \to M$ on a Riemannian manifold $M$ without boundary.

The main goal of this paper is to provide
the first steps in an approach to the study of
Sobolev spaces and of differential operators on manifolds that is as
independent as possible from local coordinates. This requires us to
use {\em connections} $\nabla$ instead of derivatives in our
definitions. In this paper, as, for instance, in \cite{HebeyBook}, we
introduce Sobolev spaces and differential operators using iterations
$\nabla^j$ of the connection $\nabla$. These objects will thus be
called {\em $\nabla$-Sobolev spaces} and {\em $\nabla$-differential
  operators}.

We prove a few basic, elementary properties of the $\nabla$-Sobolev
spaces and $\nabla$-differential operators such as multiplication
properties, mapping and restriction properties, and the independence
of these definitions on totally bounded perturbations of the
connection $\nabla$. We also recall the connection between weighted
Sobolev spaces and the usual Sobolev spaces (but for a conformally
equivalent metric) on manifolds.

To obtain more in depth results, we consider the set $\maW_b(M)$ of
vector fields on $M$ all of whose covariant derivatives are
bounded. We then say that $M$ satisfies the {\em Fr\'echet finiteness
  condition} (FFC) if $\maW_b(M)$ is finitely generated as a Fr\'echet
module over $\CIb(M)$, the space of functions all of whose covariant
derivatives are bounded (see Definition \ref{def.FFC}). This condition
is somewhat close to $M$ being of bounded geometry and is satisfied
if, for instance, $M$ is an open subset of a manifold with bounded
geometry. If the (FFC) is satisfied, we provide several additional
equivalent definitions of our $\nabla$-Sobolev spaces and
$\nabla$-differential operators.

We note that our $\nabla$-Sobolev spaces and $\nabla$-differential
operators depend on the choice of $\nabla$. While
very many works have been devoted to the role of geometry in the study
of the properties of Sobolev spaces and differential operators, much
fewer works have been devoted to the issues arising from the {\em
  twisting of the coefficient bundle}. See however the papers on
``magnetic Sobolev spaces'' by Nguyen, Pinamonti, Squassina, and
Vecchi \cite{MSSp1} and by Iftimie, M\u{a}ntoiu, and Purice
\cite{Purice2007}. From a practical point of view, considering
operators with coefficients in a vector bundle has practical
applications, as it is a framework that is necessary for the modeling
of systems, such as the ones arising in solid or continuum mechanics
(Lam\'e -- and elasticity in general -- and Stokes and Navier-Stokes
systems and their generalizations), as well as in the study
of the Hodge-Laplacian. For instance, fluid mechanics on a
curved space-time in a relativistic setting was considered recently by
Disconzi, Ifrim, and Tataru \cite{RelFluids} (see also
\cite{KohrWendland18}).

The role of the {\em geometry of the underlying manifold} in the study
of the properties of Sobolev spaces and differential operators was
much studied and we cannot do justice to all the people who have
worked on the subject; nevertheless, let us mention a few of the most
important contributions that have influenced this paper. In an early
paper, Aronszajn and Milgram \cite{AronszajnMilgram} have studied
scalar differential operators on general Riemannian manifolds,
obtaining, in particular, adjoint and Green-type formulas. The reader
will find a lot of useful geometric background material accessible to
analysts. Browder \cite{Browder61} has also worked in the scalar case
and has studied PDEs on a class of euclidean domains that these days are
called ``manifolds with boundary and bounded geometry.'' More
recently, Sobolev spaces and differential operators were studied in
very many papers, see for instance \cite{AldanaCarron, AmannFunctSp, 
ammann.grosse:16b, Aubin76, Carron2001,  HebeyRobert,
  MMMT16, strichartz} and the references therein. The monographs by
Aubin \cite{AubinBook}, Hebey \cite{HebeyBook}, and Taylor
\cite{Taylor1} provide even more references. Recently Herbert Amann
and his collaborators has started a general program to study maximal
regularity and PDEs in general on certain singular spaces that can be
modelled by manifolds with boundary and bounded geometry, see, for
instance \cite{AmannFunctSp, AmannParab, AmannCauchy, Disconzi}. A
related program (but with a completely different motivation and mostly
devoted to elliptic theory) was pursued by the second named author
together with several collaborators, see, for instance \cite{AGN1,
  GN17, MazzucatoNistor1}. This paper fits into this program of the
second named author, but the role of the geometry in the study
of $\nabla$-Sobolev and $\nabla$-differential operators will mostly
be relegated to the second paper of this series \cite{KohrNistor2},
since it takes us too far afield from the results obtained in this
paper.

\subsection*{Contents of the paper}
In Section \ref{sec.2} we recall some basic definitions concerning global differential operators with smooth
coefficients and Sobolev spaces in the context of smooth manifolds and smooth vector bundles. Thus, we
recall the definition of global, geometric $\nabla$-Sobolev spaces and of weighted $\nabla$-Sobolev spaces
and obtain related multiplication and mapping properties, as well as other basic results. Also in this section,
we define the {\it $\nabla$-differential operators} and prove additional mapping properties. The next section is
devoted to totally bounded vector fields and to differential operators generated by covariant derivatives $\nabla_X$.
We call them {\it mixed differential operators} since they make the connection between the $\nabla$-differential
operators and the classical differential operators. In addition, we introduce the {\it Fr\'echet finiteness condition}
(FFC) for the set of $\CIb$ vector fields on $M$ as a main tool in the study of mixed differential operators.
We show that if such a condition is satisfied, then
the mixed differential operators coincide with the
$\nabla$-differential operators. We also obtain various properties of mixed differential operators based on (FFC).
Among of them, we obtain a finite generation property for the algebra of bounded mixed differential operators.
We also describe equivalent definitions of $\nabla$-Sobolev spaces in terms of $\nabla$-differential and mixed
differential operators. In the last part of this section, we introduce the notion of $\nabla$-bidifferential operator
(a bilinear version of a differential operator),
and provide a parallel discussion to that of $\nabla$-differential operators, thus obtaining
analogies between the properties of $\nabla$-differential operators and those of $\nabla$-bidifferential
operators. Bilinear differential operators are necessary for a global, geometric discussion of
variational problems.



\subsection*{Acknowledgments}
We thank Herbert Amann, Bernd Ammann, Nadine Gro\ss e, Sergiu Moroianu, and Radu Purice
for useful discussions.

\section{Global differential operators and Sobolev spaces}\label{sec.2}
In this section we recall some basic definitions. For simplicity, we
shall stay as much as possible in the smooth category: smooth
manifolds, smooth vector bundles, smooth coefficients, ... Many of our
results, however, extend to operators with suitable lower regularity
of the coefficients. More often than not, these extensions to lower
regularity coefficients are straightforward, but we leave them for
another paper.

\subsection{Notation and convention}
All along the paper, $n$ is the dimension of the underlying space: $M$,
$\RR^n$,$\ldots $, and $\mu$ or $2m$ are the orders of our differential
operators. Also, we shall use the following conventions for the many types
of dual spaces that we use. If $T : V \to W$ is a linear map, then $T' : W' \to V'$ 
is its dual, where $V'$ is the dual space to $V$. If $V$ and $W$ are complex vector 
spaces endowed with inner products, then $T^* : W \to V$ is the adjoint of $T : V \to W$. 
In case of real vector spaces, instead of the adjoint we have the {\em transpose} $T^\top : W \to V$.

\subsection{Vector bundles and connections}
It will be convenient to use the language of manifolds and vector
bundles as in \cite{AronszajnMilgram, Hormander3, Seeley59, Taylor1},
for example, but there are many other possible references. In this
paper, we follow \cite{AGN1} to which we refer for concepts not
defined here, as well as for some other details. Thus, in the
following, $M$ will be a smooth Riemannian manifold without
boundary with metric $g$. We shall often consider manifolds that are subsets of some
Euclidean space, in which case, they will typically be denoted by
$\Omega$, possibly decorated with various subscripts. In this
subsection we present known, basic results needed in what follows. See
\cite{AubinBook, BesseBook, HebeyBook, Jost} for more background on
differential geometry and for the unexplained concepts and results.

The space of smooth sections of a vector bundle $E \to M$ will be
denoted $\CI(M; E)$, whereas the space of those sections that in
addition have {\em compact} support will be denoted $\CIc(M; E)$.  All
the vector bundles considered in this paper will be smooth. As usual,
$TM \to M$ is the tangent bundle to $M$ and $T^*M \to M$ is the
cotangent bundle to $M$ (the dual of $TM$). Recall that a connection
$\nabla = \nabla^E$ on $E$ is a first order (linear) differential
operator
  \begin{equation*}
    \nabla^E : \CIc(M; E) \to \CIc(M; T^*M \otimes E)\,, \quad
    \nabla^E (fu) = df \otimes u + f \nabla^E (u)\, ,
  \end{equation*}
where $f \in \CI(M)$ and $u \in \CIc(M; E).$ (All differential
operators in this paper will be linear.)  If $X \in \CI(M; TM)$ is a
smooth vector field, then $i_X : \CIc(M; T^*M \otimes E) \to \CIc(M;
T^*M \otimes E)$ is the contraction with $X$ and we have $\nabla^E_X
:= i_X \circ \nabla^E$. If $E, F \to M$ are two vector bundles endowed
with connections $\nabla^E$ and $\nabla^F$ respectively, then we shall
endow the tensor product vector bundle $E\otimes F$ with the induced
connection: $\nabla_X^{E \otimes F} = \nabla_X^{E} \otimes 1 + 1
\otimes \nabla_X^{F}.$ Let $\tau : E \otimes T^*M \otimes F \to T^*M
\otimes E \otimes F$ be the natural isomorphism permuting the first
two factors. Then $\nabla^{E \otimes F} = \nabla^E \otimes 1 + \tau
\circ (1 \otimes \nabla^F)$, which we shall write, by abuse of
notation, in the form
\begin{equation*}
  \nabla^{E \otimes F} = \nabla^{E} \otimes 1 + 1 \otimes
  \nabla^{F}\,.
\end{equation*}

We shall proceed similarly with endomorphism bundles.

\begin{remark}\label{rem.better}
Let $E$ and $F$ be two complex vector bundles on $M$ endowed with
connections.  We endow $\Hom(E, F)$ with the induced connection. More
precisely, let $\kappa : T^*M \otimes \Hom(E; F) \simeq \Hom(E; T^*M
\otimes F)$ denote the natural isomorphism. Then the connection on
$\Hom(E; F)$ is such that, for all $u \in \CIc(M; E)$ and $a \in
\CIc(M; \Hom(E; F))$,
\begin{equation*}
  \nabla^{F} (a u) = \kappa \nabla^{\Hom(E; F)}(a) u + (1 \otimes a)
  \nabla^{E} u\,,
\end{equation*}
The natural morphism $\kappa$ will be omitted from the notation from
now on. In particular, this construction for $F = \CC$, yields the
connection on the dual bundle $E':= \Hom(E; \CC)$, where the trivial
bundle is endowed with the trivial connection.  Thus, if we denote by
$\langle \, , \, \rangle : E' \otimes E \to \CC$ the natural pairing,
then the connection $\nabla^{E'}$ is such that, for all vector fields
$X$ and all smooth, compacted supported sections $u$ and $w$ of the
vector bundles $E$ and $E'$, we have
\begin{equation*}
  \langle \nabla_X^{E'} u, w \rangle \seq X \langle u, w
  \rangle q - \langle u, \nabla_X^{E} w \rangle\,.
\end{equation*}
\end{remark}

Whenever there is no danger of confusion, we shall drop the
superscripts of the connection, thus write $\nabla = \nabla^E$.  We
shall use the notation
\begin{equation*}
    V^{\otimes k} \ede \underbrace{V \otimes V \otimes \ldots \otimes
      V}_{\rm{k-times}} \ \mbox{ and } \ V^{\otimes 0} \ede \CC\,.
\end{equation*}
In particular, $T^{*\otimes j}M := (T^*M)^{\otimes j}$ will denote the
repeated tensor products of the cotangent space $T^*M$, which appears
in the range of the iterated connection map
\begin{equation*}
    \nabla^j \ede \nabla^{T^{* \otimes (j-1)} M \otimes E} \circ
    \ldots \circ \nabla^{T^{*} M \otimes E} \circ \nabla^{E} : \CIc(M;
    E) \to \CIc(M; T^{*\otimes j}M \otimes E)\,,
\end{equation*}
where $T^*M$ and $TM$ are endowed with the Levi-Civita connection
$\nabla = \nabla^{TM} = \nabla^{LC}$. The Levi-Civita connection is
the unique torsion-free, metric preserving connection on $TM$, a
concept that we recall next.

A {\em hermitian} vector bundle $E \to M$ is a complex vector bundle
endowed with a (smoothly varying, sesquilinear) inner product $(\cdot,
\cdot)_E$. Its bounded sections are denoted $L^\infty(M; E)$. A
connection $\nabla = \nabla^E : \CIc(M; E) \to \CIc(M; T^*M \otimes
E)$ is called {\em metric preserving} if, for all $\xi, \eta \in
\CIc(M; E)$, we have
\begin{equation}
\label{covariant-derivative}
   X(\xi, \eta)_E \seq (\nabla_X^E \xi, \eta)_E + (\xi, \nabla_X^E
   \eta)_E \,.
\end{equation}
The space of bounded sections $u$ of $E$ such that all their covariant
derivatives $\nabla^j u \in \CI(M; T^{*\otimes j} M \otimes E)$ are
also bounded is denoted by $\CIb(M; E)$. If the curvature of $E$ and
all its covariant derivatives are bounded (i.e. if the curvature is in
$\CI$), we shall say that $E$ has {\em totally bounded curvature}
\cite{AGN1}.

\subsection{Global, geometric $\nabla$-Sobolev spaces}
By $\dvol$ we shall denote the induced volume form (that is, measure)
on $M$ associated to the metric $g$ on $M$.
We then let, as usual,
\begin{equation}\label{eq.def.normp}
  \|u\|_{L^p(M; E)} \ede
  \begin{cases}
  \ \Big ( \int_{M} \|u(x)\|_E^p \dvol(x) \Big)^{1/p} & \mbox{ if } 1
  \le p < + \infty\\
  \ \inf_{\dvol(N) = 0} \sup_{x \in M \smallsetminus N} \|u(x)\|_E &
  \mbox{ if } p = + \infty
  \end{cases}
\end{equation}
(of course $\|u\|_{L^\infty(M; E)}$ is the essential supremum,
$\esssup \|u(x)\|_E$, of $u$). As usual, we identify sections of $E$
that coincide outside a set of measure zero to define the
$L^p$--spaces:
\begin{equation}\label{eq.def.Lp}
  L^p(M; E) \ede \{ u : M \to E \where \|u\|_p < + \infty \}/\ker (\|
  \cdot \|_p)\,.
\end{equation}

\subsubsection{Definition of $\nabla$-Sobolev spaces}
We now introduce the Sobolev spaces in which we are interested in a
global way, as in \cite{AGN1, HebeyBook}. We need to use the index
$\nabla$ in their notation since there will be other definitions of
Sobolev spaces as well and the definition depends on the choice of the
connection (in general). We consider only complex Sobolev
  spaces, but the real case can be treated similarly.

\begin{definition}\label{def.Sobolev}
Let $M$ be a Riemannian manifold with metric $g$ and volume form
$\dvol$. Let $E \to \CC$ be a finite dimensional, hermitian vector
bundle with metric preserving connection $\nabla^E$. We extend
$\nabla^E$ to connections $\nabla$ on the bundles $T^{*\otimes k}M
\otimes E$, $k \in \NN$, using the Levi-Civita connection
$\nabla^{LC}$ on $TM$ (we omit the superscrit to lighten the
notation). Let $s \in \NN$. Then
\begin{equation*}
  W^{s, p}_{\nabla }(M; E) \ede \{u \where \nabla^j(u) \in
  L^p(M;T^{*\otimes j } M \otimes E) \,,\, \mbox{ for }\, 0 \le j \le
  s \}
\end{equation*}
is the {\em order $s$, $L^p$--type $\nabla$--Sobolev space} of
sections of $E$ (so $W_\nabla ^{0, p} = L^p$) with norm
\begin{equation}
\label{Sobolev-norm}
\|u\|_{W^{s, p}_{\nabla}(M; E)} \ede \ell^p\mbox{--norm of } \{
\|\nabla^j (u)\|_{L^p(M;T^{* \otimes j} M \otimes E)}\,,\ 0 \le j
\le s \} \,.
\end{equation}
\end{definition}

When there is no danger of confusion, we shall write $\|u\|_{L^p(M)}$
{and even $\|u\|_{L^p}$} for $\|u\|_{L^p(M; E)}$ and
$\|u\|_{W^{s,p}_{\nabla}(M)}$ {(or even
  $\|u\|_{W^{s,p}_{\nabla}}$)} for $\|u\|_{W^{s, p}_{\nabla}(M; E)}$.
In particular, $\|u\|_{W^{s, \infty}_{\nabla}(M)} \ede \max_{j=0}^s
\|\nabla^j (u)\|_{L^\infty(M)}$.  We let $H_{\nabla}^s(M; E) :=
W_{\nabla}^{s, 2}(M; E)$, thus $\|u\|_{H^s_{\nabla}(M;E)}^2 \ede
\sum_{j=0}^s \|\nabla^j (u)\|_{L^2(M)}^2$.

As we shall see shortly below, the spaces $W^{s, p}_\nabla$ do have some of 
the usual properties of the Sobolev spaces on compact, smooth manifolds (with 
or without boundary) provided that $M$ has bounded geometry, see for instance 
\cite{AGN1, GrosseSchneider, HebeyBook, TriebelBG} and the references therein. 
See \cite{GM1, GM2} for related papers using Sobolev spaces that go beyond the bounded geometry setting.
The reader should be cautioned, however, not to take everything for granted since, 
for instance, the spaces $W^{s,p}_\nabla$ do depend on the choice of $\nabla$. 
See, for instance, Example \ref{ex.one} on ``magnetic Sobolev spaces.''

The space
\begin{equation}
\CIb(M; E) \ede W_{\nabla}^{\infty, \infty}(M; E) \ede \bigcap_{s
\in \NN} W_{\nabla}^{s,\infty }(M; E) \,,
\end{equation}
introduced above, that is, the space of bounded sections of $E$ such
that all their covariant derivatives are also bounded, will play an
important role in what follows. It is a Fr\'echet space endowed with
the increasing family of semi-norms $\| \cdot
\|_{W_{\nabla}^{s,\infty}(M; E)}$. Recall that a subset $S \subset
\CIb$ is bounded if, and only if, it is bounded in every semi-norm
(see, e.g., \cite[Theorem 1.37]{Rudin}).

\subsubsection{Negative and non-integer order $\nabla$-Sobolev spaces}
Let next $W_{0,\nabla }^{s,p}(M; E)$, $s \in \NN$, be the closure of
the space $\CIc(M;E)$ in $W_\nabla ^{s,p}(M; E)$. If $M$ is complete,
then, for any $1 \le p < \infty$, $W_{0,\nabla }^{1,p}(M;E)=W_\nabla
^{1,p}(M; E)$ (see \cite[Theorem 2.7]{HebeyBook}). Moreover, if $s\in
\NN$, $s\geq 2$, and $M$ is complete with positive injectivity radius
and Ricci curvature bounded up to the order $s-2$, then, for any $1
\le p < \infty$, $W_{0,\nabla }^{s,p}(M;E) = W_\nabla ^{s,p}(M; E)$
(see \cite[Theorem 2.8]{HebeyBook}).

We shall use the spaces $W_{0,\nabla }^{s,p}(M; E)$, for
$1<p<+\infty$, to define the Sobolev spaces with negative index
\begin{equation}\label{eq.def.neg.Sob}
W^{-s,p}(M; E^*) \ede W_{0,\nabla }^{s,p'}(M; E)^* \,,
\end{equation}
where $V^*$ is the complex conjugate dual of $V$ and
$\frac{1}{p}+\frac{1}{p'}=1$. For simplicity, in the following, we
shall identify $E^*$ with $E$ using the hermitian metric on $E$, so
$W^{-s,p}(M; E^*)\simeq W^{-s,p}(M; E)$. We also define the spaces
$W_{\nabla }^{s,p}(M; E)$ for $s \notin \ZZ$ by complex interpolation
(see \cite{LionsMagenes1, TriebelBook} for further details).

\subsubsection{Weighted $\nabla$-Sobolev spaces}
One of the main reason for our interest in general Sobolev spaces on manifolds is
that they are useful in the study of {\em weighted} Sobolev spaces, whose definition
we recall next (compare with Definition \ref{def.Sobolev}).

\begin{definition}
\label{rem.weighted}
Let $\rho, f_0: M \to (0, \infty)$ be given. Let $s\in {\mathbb N}$ and $p\in [1,+\infty ]$. Then
\begin{equation}
\label{weight-Sobolev}
f_0 W^{s, p}_{\nabla, \rho} (M; E) \ede \{u \where \rho^j \nabla^j
(f_0^{-1} u) \in L^p(M; E) \,,\, \mbox{ for }\, 0 \le j \le s\}\,,
\end{equation}
is the {\em order $s$}, $L^p$--type {\em $\nabla$--weighted Sobolev space} of sections of $E$,
 (in particular, $f_0W_{\nabla, \rho}^{0, p} = f_0L^p$), endowed with the norm
\begin{equation*}
\|u\|_{f_0 W^{s, p}_{\nabla, \rho} (M)} \ede \ell^p\mbox{--norm of }
\{ \|\rho^j \nabla^j (f_0^{-1} u)\|_{L^p(M)}\,,\ 0 \le j \le s \}
\,.
\end{equation*}
\end{definition}

We let $f_0H_{\nabla, \rho}^s(M; E):=
f_0W_{\nabla, \rho}^{s, 2}(M; E)$, and, thus, for any $u\in f_0H_{\nabla, \rho}^s(M; E)$,
$\|u\|_{f_0H^s_{\nabla, \rho}(M;E)}^2 \ede
\sum_{j=0}^s \|\rho^j \nabla^j (f_0^{-1} u)\|_{L^p(M)}^2$.

These weighted spaces are quite important in applications to geometry
\cite{AldanaCarron, AGN3, GM2, Gounoue, GN17, MWeyl} or to
PDEs \cite{BNZ3D1, Mironescu, CDN12, 
daugeBook, RelFluids, Kondratiev67, KMR}. Their study is similar to that of the standard 
(unweighted) Sobolev spaces. One is lead
to consider them even if one is interested only on PDEs on domains in $\RR^n$ and they can be studied by relating
them to Sobolev spaces on manifolds. If $f_0 = \rho = 1$, these spaces, of course, reduce to the standard
$L^p$--type Sobolev space $W_{\nabla}^{s, p} = 1W^{s,p}_{\nabla, 1}$ considered above. Often, the study of
weighted Sobolev spaces can be reduced to that of usual Sobolev spaces using conformal changes of metric, see,
for instance, \cite{AmannFunctSp,AGN_CR, AGN.wip}.

To illustrate this property, we denote by $g$ the metric tensor of $M$ with the associated Levi-Civita connection
$\nabla^{LC}$, and assume that $\rho ,f_0:M\to (0,\infty )$ are measurable functions. Let $g_0:=\rho ^{-2}g$. Assume
that $\rho $ is an {\it admissible weight} with respect to the metric $g_0$ (that is, $\rho $ is smooth and
$\rho ^{-1}d\rho \in \CIb(M; T^*M)$) and that $f_0$ is continuous.
Then for $\ell \in {\mathbb N}$ and $p\in [1,\infty ]$, we consider the weighted Sobolev space 
$f_0 W^{\ell , p}_{\nabla, \rho} (M;E)$ and the classical Sobolev space $W^{\ell , p}(M,g_0;E)$ 
(defined with respect to the new metric $g_0$). The relation between these spaces is given by the formula
\begin{equation}
\label{weight-classic}
    f_0 W^{\ell , p}_{\nabla, \rho} (M;E) = f_0\rho ^{-\frac{n}{p}}W^{\ell , p}(M,g_0; E),\ \
    \forall \ \ell \in {\mathbb N},\ p\in [1, \infty]
\end{equation}
\cite{AmannFunctSp, sobolev}, and, especially, Equation (1) in \cite{AGN_CR} (recall that we are assuming $\rho$ 
to be admissible). This relation between weighted and unweighted Sobolev spaces provides a strong motivation for 
our study, even if one is not interested in PDEs on manifolds, since weighted spaces appear naturally in the study of
polyhedral domains (thus even in the flat space).

\subsubsection{Negative order $\nabla$-weighted Sobolev spaces}
Let $\rho, f_0: M \to (0, \infty)$ be given. Let $s \in \NN$, $1<p<+\infty$, and
$f_0W_{0,\nabla ,\rho }^{s,p}(M; E)$ be the closure of the space $\CIc(M;E)$ in $f_0 W_{\nabla, \rho}^{s,p}(M; E)$.
We use the spaces $f_0W_{0,\nabla ,\rho }^{s,p}(M; E)$ to define the weighted Sobolev space  with negative index
\begin{equation}
\label{eq.def.neg.weight.Sob}
    f_0 W^{-s,p}_{\nabla ,\rho }(M; E^*) \ede \left(f_0^{-1} W_{0,\nabla ,\rho }^{s,p'}(M; E)\right )^* \,,
\end{equation}
where $\frac{1}{p}+\frac{1}{p'}=1$. Recall that we identify $E^*$ with $E$ using the hermitian metric on $E$, so
$f_0 W_{\nabla ,\rho }^{-s,p}(M; E^*)\simeq f_0 W_{\nabla ,\rho }^{-s,p}(M; E)$.


\subsection{First properties of $\nabla$-Sobolev spaces}
We now prove a multiplicative property of our Sobolev spaces

\begin{proposition}
\label{prop-Sobolev-new}
Let $\ell \in \NN$ and all $p,q,r \in [1, \infty]$ be such that $\frac{1}{p}+\frac{1}{q}=\frac{1}{r}$.
Then there is a constant
$C_{\ell, p,q} >0$ such that, for all $M$, all $E, E_1 \to M$, all
connections $\nabla^E$ and $\nabla^{E_1}$, and all $(a, u)\in
W_{\nabla}^{\ell,p}(M; \Hom(E; E_1)) \times
W_{\nabla}^{\ell,q}(M; E)$, we have
\begin{equation*}
  \|au\|_{W_{\nabla}^{\ell,r}(M; E_1)} \le C_{\ell, p,q}
\|a\|_{W_{\nabla}^{\ell,p}(M; \Hom(E; E_1))}
\|u\|_{W_{\nabla}^{\ell,q}(M; E)}\,.
\end{equation*}
In particular, the evaluation in $E$ defines a continuous bilinear map
\begin{equation*}
    W_{\nabla}^{\ell,p}(M; \Hom(E; E_1))\times
    W_{\nabla}^{\ell,q}(M; E) \ni (a, u)\, \mapsto \, au \in
    W_{\nabla}^{\ell,r}(M; E_1)\,.
\end{equation*}
\end{proposition}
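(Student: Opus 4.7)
The plan is to prove a pointwise iterated Leibniz formula for $\nabla^j(au)$, then apply H\"older's inequality term by term, and finally sum over the orders.

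First I would establish by induction on $j \in \NN$ the generalized Leibniz rule
\begin{equation*}
  \nabla^j(au) \seq \sum_{k=0}^j \binom{j}{k}\, \tau_{j,k}\bigl( \nabla^{j-k}(a) \otimes \nabla^k(u) \bigr),
\end{equation*}
where $\tau_{j,k} : T^{*\otimes(j-k)}M \otimes \Hom(E;E_1) \otimes T^{*\otimes k}M \otimes E \to T^{*\otimes j}M \otimes E_1$ is the natural isomorphism permuting tensor factors and evaluating $\Hom(E;E_1)\otimes E \to E_1$. The base case $j=0$ is tautological and $j=1$ is precisely the formula in Remark \ref{rem.better} (with $F=E_1$). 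The inductive step follows by applying one more $\nabla$ to both sides, distributing $\nabla$ across the tensor product via the induced connection formula $\nabla^{V\otimes W} = \nabla^V \otimes 1 + 1 \otimes \nabla^W$, and collecting terms via Pascal's identity.

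Next I would pass to pointwise norms. Since $\tau_{j,k}$ is a fiberwise isometry and the evaluation satisfies $\|a(x) u(x)\|_{E_1} \le \|a(x)\|_{\Hom(E;E_1)} \|u(x)\|_E$, the triangle inequality gives
\begin{equation*}
  \|\nabla^j(au)(x)\|_{T^{*\otimes j}M \otimes E_1} \le \sum_{k=0}^j \binom{j}{k}\, \|\nabla^{j-k}(a)(x)\| \, \|\nabla^k(u)(x)\|.
\end{equation*}
Applying H\"older's inequality with the exponent relation $1/p + 1/q = 1/r$ yields
\begin{equation*}
  \|\nabla^j(au)\|_{L^r} \le \sum_{k=0}^j \binom{j}{k}\, \|\nabla^{j-k}(a)\|_{L^p}\, \|\nabla^k(u)\|_{L^q}.
\end{equation*}

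Finally I would bound each factor on the right by the full Sobolev norms of $a$ and $u$, obtaining $\|\nabla^j(au)\|_{L^r} \le 2^j \|a\|_{W^{\ell,p}_\nabla} \|u\|_{W^{\ell,q}_\nabla}$ for all $0 \le j \le \ell$, and then take the $\ell^r$-norm over $j = 0, 1, \ldots, \ell$ to produce the desired inequality with an explicit constant $C_{\ell,p,q}$ depending only on $\ell$ (and trivially on $p,q$ through the H\"older step). The continuity of the bilinear evaluation map is then an immediate consequence.

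There is no substantive obstacle: the main item of care is the bookkeeping of tensor reorderings in the Leibniz formula so that the pointwise norm estimate is clean, but this is a standard consequence of the definition of the induced connection on $\Hom(E;E_1) \otimes E$ and the compatibility of the permutation isomorphisms with the Hermitian fiber metrics.
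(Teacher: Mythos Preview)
Your approach is sound and yields the result, but it differs from the paper's. The paper does not expand $\nabla^j(au)$ in full; instead it inducts on $\ell$ directly at the level of Sobolev norms, using only the first-order rule $\nabla(au) = \nabla(a)u + (1\otimes a)\nabla u$ from Remark~\ref{rem.better}. One bounds $\|au\|_{W^{\ell,r}_\nabla}^r$ by $\|au\|_{W^{\ell-1,r}_\nabla}^r + \|\nabla(au)\|_{W^{\ell-1,r}_\nabla}^r$ and then applies the induction hypothesis separately to $au$, $\nabla(a)u$, and $(1\otimes a)\nabla u$, each regarded as a product of a $W^{\ell-1,p}$ section with a $W^{\ell-1,q}$ section. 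This recursive route avoids all combinatorics of higher-order expansions, at the price of a less explicit constant. Your direct expansion buys a cleaner constant $C_{\ell,p,q}$ depending essentially only on $\ell$.

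One point of care: the iterated Leibniz formula as you wrote it is not literally correct. The $\binom{j}{k}$ cross terms are not all the image of $\nabla^{j-k}(a)\otimes\nabla^k(u)$ under a \emph{single} reordering $\tau_{j,k}$; they differ among themselves by the $\binom{j}{k}$ shuffle permutations of the $j$ cotangent slots (already visible at $j=2$, $k=1$, where the two cross terms are related by swapping the two $T^*M$ factors and are not equal in general). This does not harm your pointwise norm estimate, since each shuffle is a fiberwise isometry and there are exactly $\binom{j}{k}$ of them, so the inequality $\|\nabla^j(au)(x)\| \le \sum_k \binom{j}{k}\|\nabla^{j-k}a(x)\|\,\|\nabla^k u(x)\|$ survives; but you should either write the sum over shuffles explicitly or skip straight to the norm inequality. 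Relatedly, $\tau_{j,k}$ is not an isometry, since it contains the evaluation $\Hom(E;E_1)\otimes E \to E_1$; it is a contraction of fiberwise norm $\le 1$, which is precisely what your estimate needs.
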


\begin{proof}
Let us assume that at least one of the indices $p$ and $q$, for instance, $p$ belongs to $[1,\infty )$.
The case $p=\infty $ is similar, but easier. We show our property by induction on $\ell $. If $\ell = 0$
the result is known. Indeed, if $(a,u)\in L^p
(M; \Hom(E; E_1))\times L^q(M; E)$, then $au \in L^{r}(M;E_1)$, by the multiplication property
$L^{p}(M; \Hom(E; E_1))\times L^{q}(M; E)\hookrightarrow L^{r}(M; E_1)$. Moreover, we have
\begin{equation*}
    \|au\|_{L^{r}(M)}\leq \|a\|_{L^p(M)}\|u\|_{L^q(M)}\,,
\end{equation*}
and hence the bilinear map is continuous and $C_{0,p,q} = 1$ is
independent of all choices.

Assume now that our statement is valid for $\ell-1 \geq 0$, and show
that it holds also for $\ell$.  Let $(a, u)\in
W_{\nabla}^{\ell,p}(M; \Hom(E; E_1)) \times
W_{\nabla}^{\ell,q}(M; E)$. Then in view of the embedding
$W_{\nabla}^{\ell,p}(M; \Hom(E; E_1)) \times
W_{\nabla}^{\ell,q}(M; E)\hookrightarrow W_{\nabla}^{\ell-1,p}(M;
\Hom(E; E_1))\times W_{\nabla}^{\ell-1,q}(M; E)$ and by the induction
hypothesis we obtain that $au\in W_{\nabla}^{\ell-1,r}(M; E_1)$. Thus,
\begin{align}
\label{au-1}
   \nabla ^{j}(au)\in L^{r}(M; T^{* \otimes j }M \otimes
   E_1),\ \ 0\leq j\leq \ell -1\,.
\end{align}
It remains to show that $\nabla ^{\ell }(au)\in L^{r}(M; T^{* \otimes
  \ell}M \otimes E_1)$. To this end, we use the formula
\begin{align}
\label{au}
   \nabla(au) = \nabla(a)u + (1\otimes a)\nabla u
\end{align}
(see Remark \ref{rem.better}) and obtain that
\begin{equation}
\label{au-2}
\nabla ^{\ell }(au) =\nabla ^{\ell -1}\left(\nabla (au)\right)
=\nabla ^{\ell -1}\left(\nabla(a)u + (1\otimes a)\nabla u\right)\,.
\end{equation}
We have $\nabla a\in W_{\nabla}^{\ell -1,p}(M; \Hom(E, T^*M
\otimes E_1))$ and $u\in W_{\nabla}^{\ell -1,q}(M; E)$ and, thus,
$\nabla(a)u\in W_{\nabla}^{\ell -1,r}(M; E_1)$ by the induction
hypothesis. The same argument gives that $(1\otimes a)\nabla u\in
W_{\nabla}^{\ell -1,r}(M; E_1)$. Therefore, $\nabla ^{\ell }(au)\in
L^r(M; E_1)$ by formulas \eqref{au-1} and \eqref{au-2} with continuous
dependence. Hence $au\in W_{\nabla}^{\ell ,r}(M; E_1)$ and the induced
map is continuous, as asserted. More precisely, for all $p < \infty$, $q,r\in [1,\infty ]$, 
such that $\frac{1}{p}+\frac{1}{q}=\frac{1}{r}$, we obtain that
\begin{align*}
\|au\|_{W_\nabla ^{\ell, r}(M)}^r & \le \, \|au\|_{W_\nabla
^{\ell-1, r}(M)}^r + \|\nabla(au)\|_{W_\nabla ^{\ell-1,r}(M)}^r\\
& \leq C_{\ell-1, p,q} \|a\|_{{W_\nabla ^{\ell-1,p}(M)}}^r\|u\|_{{W_\nabla ^{\ell-1, q}(M)}}^r
+ \|\nabla(a)u + (1\otimes a)\nabla u\|_{W_\nabla ^{\ell-1, r}(M)}^r\\
%
%
& \leq C_{\ell-1, p,q} \Big(\|a\|_{W_\nabla ^{\ell-1,p}(M)}^r 
\|u\|_{W_\nabla ^{\ell-1, q}(M)}^p + 2^{r-1}\big(\|\nabla a\|_{W_\nabla ^{\ell-1,p}(M)}^r\|u\|_{W_\nabla ^{\ell-1,
q}(M)}^r\\
& \hspace{3em} + \|1\otimes a\|_{W_\nabla ^{\ell-1,p}(M)}^r\|\nabla u \|_{W^{\ell-1, q}(M)}^r\big)\Big)\\
%
%
%
%
&\leq C_{\ell-1, p,q} \left(1+2^{r}\right)\|a\|_{W_\nabla ^{\ell ,p}(M)}^r\|u\|_{W_\nabla ^{\ell , q}(M)}^r\\
& = C_{\ell ,p,q}^r \|a\|_{W_\nabla ^{\ell ,p}(M)}^r\|u\|_{W_\nabla ^{\ell , q}(M)}^p\,,
\end{align*}
where $C_{\ell ,p,q}^r:= C_{\ell-1, p,q}(1+2^{r})$.
\end{proof}

\begin{corollary} \label{prop-Sobolev}
Let $\ell \in \NN$ and all $q \in [1, \infty]$. Then there is a constant
$C_{\ell, q} >0$ such that, for all $M$, all $E, E_1 \to M$, all
connections $\nabla^E$ and $\nabla^{E_1}$, and all $(a, u)\in
W_{\nabla}^{\ell,\infty}(M; \Hom(E; E_1)) \times
W_{\nabla}^{\ell,q}(M; E)$, we have
\begin{equation*}
\|au\|_{W_{\nabla}^{\ell,q}(M; E_1)} \le C_{\ell, q}
\|a\|_{W_{\nabla}^{\ell,\infty}(M; \Hom(E; E_1))}
\|u\|_{W_{\nabla}^{\ell,q}(M; E)}\,.
\end{equation*}
In particular, the evaluation in $E$ defines a continuous bilinear map
\begin{equation*}
    W_{\nabla}^{\ell,\infty}(M; \Hom(E; E_1)) \times
    W_{\nabla}^{\ell,q}(M; E) \ni (a, u)\, \mapsto \, au \in
    W_{\nabla}^{\ell,q}(M; E_1).
\end{equation*}
\end{corollary}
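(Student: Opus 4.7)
The plan is simply to derive this corollary as the special case $p = \infty$ of Proposition \ref{prop-Sobolev-new}. Indeed, when $p = \infty$, the identity $\frac{1}{p} + \frac{1}{q} = \frac{1}{r}$ reduces to $\frac{1}{q} = \frac{1}{r}$, so $r = q$, and the hypothesis on the triple $(p,q,r)$ in the proposition is satisfied for every $q \in [1, \infty]$.

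Concretely, I would write: fix $\ell \in \NN$ and $q \in [1, \infty]$, set $p := \infty$ and $r := q$, and invoke Proposition \ref{prop-Sobolev-new} with these parameters. The proposition then yields a constant $C_{\ell, \infty, q} > 0$, independent of $M$, $E$, $E_1$, and the connections, such that
\begin{equation*}
\|au\|_{W_{\nabla}^{\ell, q}(M; E_1)} \,\le\, C_{\ell, \infty, q}
\,\|a\|_{W_{\nabla}^{\ell, \infty}(M; \Hom(E; E_1))}
\,\|u\|_{W_{\nabla}^{\ell, q}(M; E)}
\end{equation*}
for every $(a, u) \in W_{\nabla}^{\ell, \infty}(M; \Hom(E; E_1)) \times W_{\nabla}^{\ell, q}(M; E)$. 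Setting $C_{\ell, q} := C_{\ell, \infty, q}$ gives the stated inequality, and the continuity of the bilinear evaluation map follows at once from the inequality.

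There is no real obstacle here: the corollary is literally the $p = \infty$ instance of the preceding proposition, and the only small caveat is to observe that the proof of Proposition \ref{prop-Sobolev-new} begins by remarking that the case $p = \infty$ is ``similar but easier'' than the finite-$p$ case, so that instance is covered. Thus the corollary does not require any additional argument beyond citing Proposition \ref{prop-Sobolev-new}.
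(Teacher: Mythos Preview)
Your proposal is correct and matches the paper's approach: the corollary is stated immediately after Proposition~\ref{prop-Sobolev-new} with no separate proof, precisely because it is the specialization $p=\infty$, $r=q$ of that proposition.
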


Recall that $g$ is a metric of the $n$-dimensional manifold $M$ and $\nabla^{LC}$ is
the associated Levi-Civita connection. Assume that $\rho ,f_0,h_0:M\to (0,\infty )$ are
measurable functions. Let $g_0:=\rho ^{-2}g$ and $\nabla _0$ be the associated Levi-Civita
connection. Assume that $\rho $ is an admissible weight with respect to the metric $g_0$.
Let ${\rm{grad}}_{g_0}\phi $ denote the vector field which represents the image of $d\phi $ in $TM$
under the metric $g_0$.
If $\rho =e^\phi $ then we have the following the relation between $\nabla^{LC}$ and $\nabla _0$:
\begin{align}
\label{LC-0}
    (\nabla^{LC}-\nabla _0)_X Y \seq X(\phi )Y + Y(\phi )X -g_0(X,Y)
    \operatorname{grad}_{g_0}\phi\,,
\end{align}
(see, for instance, formula (5) in \cite{AGN2020}).

Let $W_0^{\ell ,p}(M,g_0;E)$ denote the classical Sobolev space defined as the completion of the space 
$\CIc(M; E)$ with respect to the norm given by \eqref{Sobolev-norm} and corresponding to the metric $g_0$.
Then formula \eqref{weight-classic} and Proposition \ref{prop-Sobolev-new} applied to the classical Sobolev
spaces $W^{\ell ,p}(M,g_0;E)$ lead to the following multiplication property of weighted Sobolev spaces.

\begin{proposition}
\label{prop-Sobolev-weight}
Let $\ell \in \NN$ and $p,q,r \in [1, \infty]$ be such that
$\frac{1}{p}+\frac{1}{q}=\frac{1}{r}$. Let $f_0,h_0:M\to (0,\infty )$ be given measurable functions.
If $\rho : M \to (0, \infty)$ is an admissible weight with respect to the metric $g_0=\rho ^{-2}g$ and
$f_0,h_0$ are continuous, then the bilinear map
\begin{equation*}
   f_0 W^{\ell , p}_{\nabla, \rho}(M; \Hom(E; E_1))\times 
   h_0W_{\nabla, \rho}^{\ell,q}(M;E) \ni (a, u)\, \mapsto \, au \in
   f_0 h_0 W_{\nabla, \rho}^{\ell,r}(M; E_1)
\end{equation*}
is continuous.
\end{proposition}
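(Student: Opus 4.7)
The plan is to reduce the weighted multiplication statement to the unweighted one (Proposition \ref{prop-Sobolev-new}) by passing to the conformally rescaled metric $g_0 = \rho^{-2} g$ via the identification \eqref{weight-classic}. The key arithmetic observation is that the hypothesis $\tfrac{1}{p} + \tfrac{1}{q} = \tfrac{1}{r}$ makes the weight exponents add up correctly, namely $\rho^{-n/p} \rho^{-n/q} = \rho^{-n/r}$, so the weighted framework is preserved under pointwise multiplication.

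First, I would apply \eqref{weight-classic} to each of the three spaces appearing in the statement:
\begin{align*}
f_0 W^{\ell,p}_{\nabla,\rho}(M;\Hom(E;E_1)) &= f_0 \rho^{-n/p}\, W^{\ell,p}(M,g_0;\Hom(E;E_1)),\\
h_0 W^{\ell,q}_{\nabla,\rho}(M;E) &= h_0 \rho^{-n/q}\, W^{\ell,q}(M,g_0;E),\\
f_0 h_0 W^{\ell,r}_{\nabla,\rho}(M;E_1) &= f_0 h_0 \rho^{-n/r}\, W^{\ell,r}(M,g_0;E_1).
\end{align*}
Thus, giving $a \in f_0 W^{\ell,p}_{\nabla,\rho}$ and $u \in h_0 W^{\ell,q}_{\nabla,\rho}$ amounts to writing $a = f_0 \rho^{-n/p} \widetilde{a}$ and $u = h_0 \rho^{-n/q} \widetilde{u}$, with $\widetilde{a} \in W^{\ell,p}(M,g_0;\Hom(E;E_1))$ and $\widetilde{u} \in W^{\ell,q}(M,g_0;E)$, and the norms on the weighted spaces are by definition the norms on the classical Sobolev spaces of $\widetilde{a}$ and $\widetilde{u}$.

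Next, pointwise multiplication of a section of $\Hom(E;E_1)$ and a section of $E$ does not depend on which Sobolev framework is used, so
\begin{equation*}
au \seq f_0 h_0 \, \rho^{-n/p - n/q} \, \widetilde{a}\, \widetilde{u} \seq f_0 h_0 \, \rho^{-n/r} \, \widetilde{a}\, \widetilde{u}\,,
\end{equation*}
using $\tfrac{1}{p} + \tfrac{1}{q} = \tfrac{1}{r}$. I now apply Proposition \ref{prop-Sobolev-new} to the classical Sobolev spaces associated with the metric $g_0$ (noting that this proposition is formulated for an arbitrary Riemannian manifold and hence applies verbatim to $(M, g_0)$ with its Levi-Civita connection $\nabla_0$): this yields $\widetilde{a}\,\widetilde{u} \in W^{\ell,r}(M,g_0;E_1)$ together with the estimate
\begin{equation*}
\|\widetilde{a}\,\widetilde{u}\|_{W^{\ell,r}(M,g_0)} \,\le\, C_{\ell,p,q}\, \|\widetilde{a}\|_{W^{\ell,p}(M,g_0)}\, \|\widetilde{u}\|_{W^{\ell,q}(M,g_0)}\,.
\end{equation*}
Unwinding the identification \eqref{weight-classic} in the opposite direction then gives $au \in f_0 h_0 W^{\ell,r}_{\nabla,\rho}(M;E_1)$ with a continuous bilinear dependence, which is the desired conclusion.

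The only subtlety I anticipate is a bookkeeping one: Proposition \ref{prop-Sobolev-new} is implicitly used with its connection on $(M,g_0)$, namely $\nabla_0$, whereas the weighted spaces in the statement are defined using $\nabla^{LC}$ together with the weights. However, this is exactly what \eqref{weight-classic} is designed to handle, and the relation \eqref{LC-0} between $\nabla^{LC}$ and $\nabla_0$ (which underlies \eqref{weight-classic}) is already encoded in that identification; so once we trust \eqref{weight-classic}, no further geometric computation is needed. The admissibility of $\rho$ and the continuity of $f_0, h_0$ are used precisely to guarantee that \eqref{weight-classic} holds as an equality of (topological) vector spaces.
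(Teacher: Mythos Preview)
Your proof is correct and follows essentially the same approach as the paper: reduce to the unweighted multiplication estimate of Proposition~\ref{prop-Sobolev-new} on $(M,g_0)$ via the identification~\eqref{weight-classic}, and use $\tfrac{1}{p}+\tfrac{1}{q}=\tfrac{1}{r}$ to match the weight exponents. Your write-up is in fact slightly more explicit than the paper's in tracking the decompositions $a=f_0\rho^{-n/p}\widetilde a$, $u=h_0\rho^{-n/q}\widetilde u$, but the argument is the same.
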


\begin{proof}
In view of Proposition \ref{prop-Sobolev-new} we obtain the continuity of the embedding
(of the product of classical Sobolev spaces defined with respect to the metric $g_0$)
$$W^{\ell , p}(M,g_0; \Hom(E; E_1))\times W^{\ell,q}(M,g_0;E)\hookrightarrow W^{\ell,r}(M,g_0;E)\,.$$
Then formula \ref{weight-classic} implies the continuity of the embedding
\begin{align*}
&f_0 W^{\ell , p}_{\nabla^{LC}, \rho}(M; \Hom(E; E_1))\times
h_0W_{\nabla^{LC}, \rho}^{\ell,q}(M;E)\\
&\hspace{2em}=f_0\rho ^{-\frac{n}{p}}W^{\ell , p}(M,g_0;\Hom(E; E_1))
\times h_0\rho ^{-\frac{n}{q}}W^{\ell,q}(M,g_0;E)\\
&\hspace{2em}\hookrightarrow (f_0h_0)\rho ^{-(\frac{n}{p}+\frac{n}{q})}W^{\ell,r}(M,g_0;E)\\
&\hspace{2em}=(f_0h_0)\rho ^{-\frac{n}{r}}W^{\ell,r}(M,g_0;E)\\
&\hspace{2em}=f_0h_0W_{\nabla^{LC}, \rho}^{\ell,r}(M;E)\,,
\end{align*}
that is, the desired result.
\end{proof}

To understand the complications introduced by the use of the connection, let us discuss the case
of ``magnetic Sobolev spaces'' \cite{MSSp1, Purice2007}.  Let us introduce first some notation.

\begin{notation}\label{not.local2}
\normalfont{Assume $M \subset \RR^n$, so we have global coordinates
  $x_j$. We shall use the following notation}
\begin{enumerate}[(i)]
\item $I \define \{1, 2, \ldots, n\}$ and $J_{\mu} \define
  \{\emptyset\} \cup I \cup I^2 \cup \ldots \cup I^{\mu}$;

\item $(e_j := \pa_j := \frac{\pa }{\pa x_j})_{j \in I}$ and $(e_j^*
  := dx_j)_{j \in I}$, are the standard basis of $T \Omega$ and,
  respectively $T^*\Omega$, and $e_{\vect{i}} := e_{i_0} \otimes
  e_{i_1} \otimes \ldots e_{i_r}$ and $e^*_{\vect{i}} := e^*_{i_0}
  \otimes e^*_{i_1} \otimes \ldots e^*_{i_r}$, where $\vect{i} =(i_1,
  i_2, \ldots, i_r) \in I^r \subset J_\mu $ and $e_\emptyset =
  e^*_\emptyset = 1 \in \CC$;

\item For $\vect{i} = (i_1, i_2, \ldots,i_r) \in I^r \subset J_\mu$,
  let $|\vect{i}| := r$

\item\label{not.item.iii}
For $\vect{i} = (i_1, i_2, \ldots,i_r) \in I^r$, let
$\nabla_{\vect{i}} \ede \nabla_{i_1} \nabla_{i_2} \ldots
\nabla_{i_r}$.
\end{enumerate}
\end{notation}

\begin{example}[Magnetic Sobolev spaces]\label{ex.one}
Let us assume that $M = \RR^n$ with the flat metric, but that the trivial bundle $E = M
\times V \to M$ has a non-trivial connection, where $V$ is some
given real vector space. Let $\nabla_j = \nabla_{e_j}$ be the covariant derivative
with respect to the vector field $e_j := \frac{\pa}{\pa x_j}$. Then
$\nabla_j (dx_k) = 0$ and
\begin{equation}\label{eq.ex.one.1}
    \nabla_j(\xi) \ede \pa_j \xi + A_j \xi \, ,
\end{equation}
where $\pa_j$ is the partial derivative with respect to $j$th variable
(which is defined since $E \to M = \RR^n$ is a trivial bundle)
and $A_j \in \CI(M; \End(V))$. Recall the notation of
\ref{not.local2}. We agree that $\nabla_{\emptyset} u = u$.

By induction, we then obtain that
    \begin{equation}\label{eq.ex.one.2}
        \nabla^r u \seq \sum_{\vect{i} \in J^r} e^*_{\vect{i}} \otimes
        \nabla_{\vect{i}} u\, .
  \end{equation}
We obtain, in particular, that
\begin{equation}\label{eq.actully.important}
    u \in W^{r,p}_{\nabla}(M; E) \ \Leftrightarrow \ \forall \vect{i}
    \in J_r\,,\ \nabla_{\vect{i}} u \in L^p(M; T^{*\otimes |\vect{i}|}
    M \otimes E)\,.
\end{equation}
Because of the lack of commutation (in general) of the operators
$\nabla_i$, these conditions may become more stringent than in the
classical case.  Let us assume furthermore that $n = 2$, $V = \CC^2$,
$A_1 = 0$, and
\begin{equation}\label{eq.ex.one.3}
    A_2 \ede \left (
    \begin{array}{cc}
    0 & e^{\imath x_1^3}\\
    - e^{-\imath x_1^3} & 0
  \end{array}
  \right) \,.
\end{equation}
(Notice that $A_2^* = - A_2$.) Then $H^1_{\nabla}(M; E) = H^1(M; E)$,
but $H^2_{\nabla}(M; E) \neq H^2(M; E)$. Indeed, let us use
Equation \eqref{eq.ex.one.1}. We shall write $\nabla_{\vect{i}} = \nabla_{(1,1)} = \nabla_{e_1}^2$
when $\vect{i} = (1, 1)$, and so on, according to \ref{not.local2}. Then we obtain
\begin{align*}
    &\nabla _{(1,1)}\xi =\partial _1^2\xi \,,\
\nabla _{(1,2)}\xi =\partial _1\partial _2\xi + \left(
3\imath x_1^2e^{\imath x_1^3}\xi _2 + e^{\imath x_1^3}\partial _1\xi _2,\
3\imath x_1^2e^{-\imath x_1^3}\xi _1-e^{-\imath x_1^3}\partial _1\xi _1
\right)\\
&\nabla _{(2,1)}\xi =\partial _2\partial _1\xi  + \left(
e^{\imath x_1^3}\partial _1\xi _2,
-e^{-\imath x_1^3}\partial _1\xi _1
\right),\,
\nabla _{(2,2)}\xi =\partial _2^2\xi +
2\left(
e^{\imath x_1^3}\partial_2\xi _2,
-e^{-\imath x_1^3}\partial _2\xi _1
\right)-\xi .
\end{align*}
See \cite{Purice2007, MSSp1}.
\end{example}

We continue with some further properties of $\nabla$-Sobolev spaces.

\begin{corollary}
The tensor product defines a continuous bilinear map
\begin{equation}
\label{eq.mult.Sobolev}
   W_\nabla ^{\ell,\infty}(M; E) \times W_\nabla ^{\ell,p}(M; E_1) \ni
   (u,v) \, \mapsto \, u \otimes v \in W_\nabla ^{\ell,p}(M; E \otimes
   E_1) \,.
\end{equation}
\end{corollary}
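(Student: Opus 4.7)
The plan is to reduce this to Corollary \ref{prop-Sobolev} by realizing the tensor product as a special case of the evaluation pairing. I introduce the natural bundle morphism
\begin{equation*}
  \iota : E \longrightarrow \Hom(E_1, E \otimes E_1)\,, \qquad \iota(u)(v) \ede u \otimes v\,,
\end{equation*}
which, under the canonical identification $\Hom(E_1, E \otimes E_1) \simeq E \otimes \End(E_1)$, corresponds to $\iota(u) = u \otimes \operatorname{id}_{E_1}$. The identity section $\operatorname{id}_{E_1} \in \CI(M; \End(E_1))$ is parallel: by the very definition of the induced connection on $\End(E_1)$ recalled in Remark \ref{rem.better}, for every $w \in \CIc(M; E_1)$ and every smooth vector field $X$,
\begin{equation*}
  (\nabla_X \operatorname{id}_{E_1})(w) \seq \nabla_X^{E_1}(\operatorname{id}_{E_1} w) - \operatorname{id}_{E_1}(\nabla_X^{E_1} w) \seq 0\,.
\end{equation*}
Consequently $\iota$ intertwines the connections in the sense that $\nabla^j \iota(u) = \iota(\nabla^j u)$ for every $j \in \NN$, and since $\iota$ is a pointwise isometry, it induces an isometric embedding
\begin{equation*}
  \iota_* : W_\nabla^{\ell,\infty}(M; E) \hookrightarrow W_\nabla^{\ell,\infty}(M; \Hom(E_1, E \otimes E_1))\,.
\end{equation*}

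Next, I apply Corollary \ref{prop-Sobolev}, with $E_1$ and $E \otimes E_1$ playing the roles of $E$ and $E_1$ from its statement, and with $q = p$, to the pair $(\iota(u), v)$. Using the identification $\iota(u)(v) = u \otimes v$, this yields
\begin{equation*}
  \|u \otimes v\|_{W_\nabla^{\ell,p}(M; E \otimes E_1)} \,\le\, C_{\ell, p} \|\iota(u)\|_{W_\nabla^{\ell,\infty}(M; \Hom(E_1, E \otimes E_1))} \|v\|_{W_\nabla^{\ell,p}(M; E_1)}\,,
\end{equation*}
and the middle factor equals $\|u\|_{W_\nabla^{\ell,\infty}(M; E)}$ by the intertwining observed above. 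This establishes the required continuity.

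I do not foresee any real obstacle: the only substantive point is the parallelism of $\operatorname{id}_{E_1}$, which reduces to a one-line Leibniz computation, and the multiplicative estimate itself is already delivered by Corollary \ref{prop-Sobolev}. An alternative proof of identical length would mimic the induction in Proposition \ref{prop-Sobolev-new} using the Leibniz rule $\nabla(u \otimes v) = \nabla u \otimes v + u \otimes \nabla v$; it is strictly parallel to the earlier argument and introduces no new difficulty.
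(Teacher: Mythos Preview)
Your proof is correct and takes essentially the same approach as the paper: both reduce to Corollary~\ref{prop-Sobolev} via the natural map $E \to \Hom(E_1, E \otimes E_1)$, $u \mapsto (v \mapsto u \otimes v)$. The paper's proof is a single sentence noting that this map is contractive; you supply the underlying detail (parallelism of $\operatorname{id}_{E_1}$, hence $\iota$ intertwines the connections and is an isometry on the $W_\nabla^{\ell,\infty}$ level), which in fact justifies the step the paper leaves implicit.
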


\begin{proof}
The natural map $E \to \Hom(E_1, E \otimes E_1)$ is contractive, so the
continuity of the map \eqref{eq.mult.Sobolev} follows from Proposition
\ref{prop-Sobolev}.
\end{proof}

\begin{proposition} Let $E, F \to M$ be hermitian vector bundles
  endowed with metric preserving connections. Then the pointwise trace
  $\tr_x\colon E_x \otimes E_x' \to \CC$, $x\in M$, $\tr_x(v' \otimes
  v) := \langle v', v \rangle$, defines the canonical contraction
  $\epsilon : E \otimes E' \otimes F\to F$, which satisfies $\nabla
  (\epsilon) = 0$. Consequently, $\epsilon \in W^{\infty, \infty}(M;
  \Hom( E \otimes E' \otimes F; F))$ and it thus induces a continuous
  map
\begin{equation}\label{eq.cont.Sobolev}
     \epsilon : W_{\nabla}^{\ell, p}(M; E \otimes E' \otimes F) \ \to
     \ W_{\nabla}^{\ell, p}(M; F)\,.
\end{equation}
\end{proposition}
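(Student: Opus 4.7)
The plan is to separate the proof into three steps: (a) verify that the contraction $\epsilon$ is parallel with respect to the induced connections, (b) deduce that $\epsilon \in W^{\infty,\infty}$, and (c) invoke the multiplication property already proved (Corollary \ref{prop-Sobolev}) to obtain the continuity of the induced map. The core of the argument is step (a); the rest will be bookkeeping.

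For step (a), I would check $\nabla \epsilon = 0$ on simple tensors. Given $u = v \otimes v' \otimes f \in \CIc(M; E \otimes E' \otimes F)$ and any vector field $X$, the induced connection decomposes as a sum of three Leibniz terms, so
\begin{equation*}
  \epsilon\bigl(\nabla^{E \otimes E' \otimes F}_X u\bigr) \seq \langle v', \nabla^E_X v \rangle f + \langle \nabla^{E'}_X v', v \rangle f + \langle v', v \rangle \nabla^F_X f\,.
\end{equation*}
On the other hand, $\epsilon(u) = \langle v', v \rangle f$, so by the Leibniz rule on $F$,
\begin{equation*}
  \nabla^F_X \epsilon(u) \seq X\langle v', v \rangle\, f + \langle v', v \rangle \nabla^F_X f\,.
\end{equation*}
The defining property of the dual connection recalled in Remark \ref{rem.better}, namely $X\langle v', v \rangle = \langle \nabla^{E'}_X v', v\rangle + \langle v', \nabla^E_X v\rangle$, matches the two expressions term by term. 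By the characterization of $\nabla^{\Hom(\cdot,\cdot)}$ in the same remark, this is exactly the assertion $\nabla \epsilon = 0$.

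For step (b), I note that $\epsilon$ is pointwise a contraction: by Cauchy--Schwarz applied to the hermitian pairing $\langle \cdot, \cdot\rangle\colon E'_x \otimes E_x \to \CC$,
\begin{equation*}
  \|\epsilon_x(v \otimes v' \otimes f)\|_F \seq |\langle v', v\rangle|\, \|f\|_F \le \|v\|_E\, \|v'\|_{E'}\, \|f\|_F\,,
\end{equation*}
so $\|\epsilon\|_{L^\infty} \le 1$. Combined with $\nabla \epsilon = 0$, every iterated covariant derivative $\nabla^j \epsilon$ vanishes for $j \ge 1$, and hence $\epsilon \in W^{s,\infty}_\nabla(M; \Hom(E \otimes E' \otimes F, F))$ for every $s$, with norm uniformly bounded by $1$.

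Finally, step (c) is an immediate application of Corollary \ref{prop-Sobolev} with $a = \epsilon$: for every $\ell \in \NN$ and $p \in [1,\infty]$, multiplication by $\epsilon$ defines a continuous map $W^{\ell,p}_\nabla(M; E \otimes E' \otimes F) \to W^{\ell,p}_\nabla(M; F)$, with norm controlled by $C_{\ell,p}\|\epsilon\|_{W^{\ell,\infty}_\nabla} \le C_{\ell,p}$. The only step I anticipate requiring some care is (a), specifically matching up which slot of the triple tensor product the pairing acts on; the swap isomorphism used implicitly in defining $\epsilon$ on $E \otimes E' \otimes F$ rather than $E' \otimes E \otimes F$ is parallel, so it causes no trouble, but it should be mentioned to keep the computation symbolically clean.
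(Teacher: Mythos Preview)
Your proposal is correct and follows essentially the same route as the paper's proof: both verify $\nabla\epsilon=0$ on simple tensors via the Leibniz rule and the defining relation for the dual connection from Remark~\ref{rem.better}, note that $\epsilon$ is pointwise a contraction (so $\|\epsilon\|_{L^\infty}\le1$), and then invoke Corollary~\ref{prop-Sobolev} to conclude. The only cosmetic differences are the ordering of the tensor factors and that the paper appeals to the projective tensor norm rather than Cauchy--Schwarz for the pointwise bound; your remark about the parallel swap isomorphism cleanly handles the former.
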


\begin{proof} We have $\|\tr_x\| \le 1$ by the definition of
  the norm on $E'$ (we endow the tensor product with the projective
  tensor product norm). Consequently $\|\epsilon\|_\infty \le 1$.  The
  relation $\nabla (\epsilon) = 0$ follows from the definition of the
  connection on $E'$ and on tensor products. Indeed, using Remark
  \ref{rem.better}, we obtain
  \begin{align*}
    \nabla(\epsilon)(v' \otimes v \otimes \xi) & = \nabla \big[
      \epsilon(v'\otimes v \otimes \xi)\big] - \epsilon\big[ \nabla
      (v'\otimes v \otimes \xi)\big]\\
& = \nabla \big (\langle v', v\rangle \otimes \xi \big) -
    \epsilon\big[ \nabla (v' \otimes v) \otimes \xi + v' \otimes v
      \otimes \nabla(\xi) \big]\\
 & = \big [ d\langle v', v \rangle + \epsilon \nabla (v'\otimes v)
      \big ] \otimes \xi + \langle v', v \rangle \otimes \nabla(\xi)
    - \epsilon\big( v' \otimes v \otimes \nabla(\xi) \big) \\
 & = \big [ d\langle v', v \rangle + \langle \nabla(v'), v \rangle +
      \langle v', \nabla(v) \rangle \big ] \otimes \xi \seq 0 \,.
    \end{align*}
  Therefore $\nabla^{\ell}(\epsilon) = 0 \in L^\infty$, for all
  $\ell$.

  Once we have established that $\epsilon \in W^{\infty, \infty}(M;
  \Hom( E \otimes E' \otimes F; F))$, the second part of the proposition
  follows from Proposition \ref{prop-Sobolev}.
\end{proof}

Although we shall not use the following result in this paper, it is an
interesting general result that may allow us to compare Sobolev norms
with their Euclidean counter-parts.

Let $\maU = (U_i)_{i \in I}$ be a covering of $M$, that is, $M =
\cup_{i \in I} U_i$. Recall that its {\em covering multiplicity} is
$N(\maU) = N((U_i)_{i \in I})$
\begin{equation}
  \label{eq.def.c.i}
     N(\maU) \ede \max \{r \, \vert \ \exists i_1 , i_2, \ldots, i_r
     \in I \mbox{ distinct with } U_{i_1} \cap U_{i_2} \cap
     \ldots \cap U_{i_r} \neq \emptyset \}\,.
\end{equation}
Also, recall that $\maU = (U_i)_{i \in I}$ of $M$ is {\em uniformly
  locally finite} if $N(\maU) < \infty$. Assume that the index set $I$
labeling the open sets of the covering $\maU$ is countable. Then we
also let
\begin{equation}\label{eq.norm3}
  ||| u |||_{\maU, s, p} \ede
  \begin{cases}
    \, \Big( \sum_{j \le s, i \in I} \|\nabla^i u\|^p_{L^p(U_i)}
    \Big)^{1/p} & \mbox{ if } 1 \le p < \infty\\
    \ \sup_{j \le s, i \in I} \|\nabla^i u\|_{L^\infty(U_i)} &
    \mbox{ if } p = \infty
    \end{cases}
\end{equation}

\begin{proposition}\label{prop.comparison} We
  have $ ||| u |||_{\maU, s, \infty} = \|u\|_{W^{s, \infty}(M)}$. If
  $1 \le p < \infty$, then $\|u\|_{W^{s, \infty}(M)} \le ||| u
  |||_{\maU, s, p} \le N(\maU)^{1/p} \|u\|_{W^{s, \infty}(M)}.$
\end{proposition}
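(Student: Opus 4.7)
The plan is to prove both parts by direct computation, noting that the second displayed inequality as written compares $\||\cdot|\|_{\maU,s,p}$ to $\|\cdot\|_{W^{s,\infty}}$, but the natural (and I believe intended) bound is against $\|\cdot\|_{W^{s,p}}$; I will carry out the argument for $W^{s,p}$, from which the stated $W^{s,\infty}$ version (if that is what is really wanted) would follow only under the additional assumption $\vol(M)<\infty$.

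For the case $p=\infty$, I would observe that, since $\bigcup_{i\in I} U_i = M$,
\begin{equation*}
 \|\nabla^j u\|_{L^\infty(M)} \seq \sup_{i\in I}\, \|\nabla^j u\|_{L^\infty(U_i)}\,,
\end{equation*}
for each $0 \le j \le s$. Taking the maximum over $j$ then yields $\|u\|_{W^{s,\infty}(M)} = |||u|||_{\maU,s,\infty}$, which is the first statement.

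For $1\le p<\infty$, the key observation is that the sum of indicator functions $\sigma(x) \ede \sum_{i\in I}\chi_{U_i}(x)$ satisfies $1\le \sigma(x)\le N(\maU)$ for every $x\in M$, the lower bound because $\maU$ is a covering and the upper bound by the very definition of $N(\maU)$. Hence, for any non-negative measurable function $f$ on $M$,
\begin{equation*}
 \int_M f \dvol \, \le \, \sum_{i\in I}\int_{U_i} f \dvol \seq \int_M \sigma f \dvol \, \le \, N(\maU) \int_M f \dvol\,.
\end{equation*}
Applying this with $f = \|\nabla^j u\|^p$ and summing over $j=0,1,\ldots,s$ gives
\begin{equation*}
 \|u\|_{W^{s,p}_\nabla(M)}^p \, \le \, |||u|||_{\maU,s,p}^p \, \le \, N(\maU) \|u\|_{W^{s,p}_\nabla(M)}^p\,,
\end{equation*}
and taking $p$-th roots yields the desired double inequality $\|u\|_{W^{s,p}_\nabla(M)}\le |||u|||_{\maU,s,p}\le N(\maU)^{1/p}\|u\|_{W^{s,p}_\nabla(M)}$.

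There is no real obstacle here; the only subtlety is the bookkeeping between sums indexed by $i\in I$ and integrals over $M$, handled cleanly by the $\sigma(x)$ trick. If the paper really intends the $W^{s,\infty}$ norm on the right, I would flag this as a likely typo and proceed with the $W^{s,p}$ version, since that is both natural and genuinely useful when the authors later compare $\nabla$-Sobolev norms to their Euclidean counter-parts via a uniformly locally finite atlas of bounded geometry.
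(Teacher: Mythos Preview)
Your proof is correct and follows essentially the same approach as the paper: the paper's proof is a two-line sketch that simply records the inequality $\int_M f\,\dvol \le \sum_{i\in I}\int_{U_i} f\,\dvol \le N(\maU)\int_M f\,\dvol$ for $f\ge 0$ measurable, which is precisely what your $\sigma(x)=\sum_i\chi_{U_i}(x)$ argument establishes. You are also right to flag the $W^{s,\infty}$ on the right-hand side of the second inequality as a typo for $W^{s,p}$; the paper's own proof (using integrals, not sup-norms) only supports the $W^{s,p}$ version.
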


\begin{proof} This follows from the definitions of the
  norms $\| \, \cdot \, \|_{W^{s, \infty}(M)}$ and $||| \, \cdot \,
  |||_{\maU, s, p}$ and, for $f \ge 0$ measurable, the inequalities
\begin{equation*}
    \int_M f \dvol \, \le \, \sum_{i \in I} \, \int_{U_i} f \dvol \, \le
    \, N(\maU) \int_M f \dvol\,.
\end{equation*}
This completes the proof.
\end{proof}

To continue  our study of $\nabla$-Sobolev spaces, we need to take a
look also at ``$\nabla$-differential operators,'' which  we introduce
in the next section.

\subsection{Global, geometric $\nabla$-differential operators}
We now start our study of differential operators. We shall consider
globally defined differential operators on $M$ with smooth
coefficients acting on sections of smooth vector bundles. We provide
several definitions. We are especially interested in definitions that
do not rely on local coordinates (unlike the classical one). We may
assume that these vector bundles are endowed with metrics and
metric preserving connections, denoted generically by $\nabla$, as
agreed above. We let $\nabla^0 = id$.

As mentioned already, {\em all our vector bundles will be smooth.}
Also, {\em we shall consider differential operators with smooth
  coefficients, unless otherwise mentioned.} The case of differential
operators with non-smooth coefficients is to a large extent very
similar.

Recall that $V^{\otimes k} \ede V \otimes V \otimes \ldots \otimes V$
($k$-times) and that $V^{\otimes 0} \ede \CC\,.$ It will be convenient
to consider the ``truncated Fock space''
\begin{equation} \label{eq.def.FmE}
   \maF^M_\mu(E) \define \oplus_{j=0}^\mu\, T^{*\otimes j}M \otimes E
   \,.
\end{equation}
We endow the truncated Fock space $ \maF^M_m(E)$ with the induced
connection from $E$ and $T^*M$. Given $a \in \CI(M; \maF_\mu^M (E);
F))$, we shall write $a^{[j]} \in \CI(M;\Hom(T^{*\otimes j} M \otimes
E; F))$, $j=0,\ldots ,\mu $, for the resulting component and we
shall also define $a \cdot \nabla^{tot}:=\sum_{j=0}^{\mu}a^{[j]}\nabla^j$.

\begin{definition}\label{def.diff.op}
Let $E, F \to M$ be vector bundles, with $E$ endowed with a
connection, and let {$a = (a^{[0]}, a^{[1]}, \ldots, a^{[\mu]})$ be a
  suitable section of $\Hom(\maF_\mu^M (E); F))$.} A {\em
  $\nabla$--differential operator} is a map
\begin{equation*}
   P \seq a \cdot \nabla^{tot} \ede \sum_{j=0}^{\mu}a^{[j]}\nabla^j:
   \CIc(M;E) \to \CIc(M; F)\,.
\end{equation*}
We let $\ord(P)$ denote the least $\mu$ for which such a writing
exists and call it the {\em order} of $P$. Suitable extensions by
continuity of $P$ will also be called $\nabla$--differential operators
and will be denoted by the same letter.
\end{definition}

Unless stated otherwise, all our differential operators will have
smooth coefficients. Thus, by ``a $\nabla$-differential operator''
will mean a ``$\nabla$-differential operator with smooth
coefficients.'' The case of operators with non-smooth coefficients
will only rarely be considered, but it usually can be treated in a
similar way.

\begin{notation}\label{not.Diff}
Let us introduce now some further notation and terminology related a
differential operator $P := a \cdot \nabla^{tot} :=
\sum_{j=0}^{\mu}a^{[j]}\nabla^j$ as in the definition.
\begin{itemize}
  \item If $a \in \CI(M; \Hom(\maF_\mu^M (E); F))$, we shall say
    that $P$ {\em has $\CI$-coefficients.} The set of such operators
    is denoted $\Diff_{\nabla}^\mu(M; E, F)$.

\hspace*{-1.3cm} Assume now also that $E$ and $F$ are endowed with
hermitian metrics.

\item
  If $a \in W_{\nabla}^{\ell,\infty}(M; \Hom(\maF_\mu^M (E); F))$, we
  shall say that $P$ {\em has coefficients in $W^{\ell,\infty}$.}
\item If, in fact, $\ell = 0$, we shall say that $P$ has {\em bounded}
  coefficients.
\item On the other hand, if $\ell = \infty$, we shall say that $P$ has
  {\em totally bounded} (or $\CIb$) coefficients.  The set of such
  operators is denoted {$\Diff_{b, \nabla}^\mu(M; E, F)$.}
\end{itemize}
\end{notation}

We shall often drop the index $\nabla$ from $\Diff_{\nabla}^\mu(M; E,
F)$ and $\Diff_{b, \nabla}^\mu(M; E, F)$.  Here are some comments.

\begin{remark}\label{rem.easy}
We use the notation of Definition \ref{def.diff.op}.
\begin{enumerate}
\item
If $j \ge 2$, the coefficient $a^{[j]}$ in $a = (a^{[0]}, a^{[1]},
\ldots, a^{[\mu]}) \in \CI(M; \maF_\mu^M (E); F))$ is {\em not}
uniquely determined by the map $P = a \cdot \nabla^{tot} :=
\sum_{j=0}^{\mu} a^{[j]} \nabla^j :\CIc(M; E) \to \CIc(M; F)$.

\item Peetre's Theorem \cite{Hormander3,
    Peetre} characterizes the explicit structure of
  $\nabla$--differential operators and shows that such an operator is,
  indeed, a classical differential operator in any coordinate
  chart.

\item
Note that in our approach avoiding local coordinates, one also needs
to consider vector valued Sobolev spaces (with values in tensor
products of the cotangent bundle) even if one is interested only in
scalar equations.
\end{enumerate}
\end{remark}

We have the following ``easy'' continuities.

\begin{lemma}\label{lemma.easy.cont}
The definition of $\nabla$-differential operators gives for $k \in \NN$:
\begin{enumerate}
\item $\nabla : W_{\nabla}^{k+1, p}(M; E) \to W_{\nabla}^{k, p}(M;
  T^*M \otimes E)$ is continuous.
\item \label{item.easy.cont} For $P$ as in Definition
  \ref{def.diff.op} with coefficients in $W^{\ell, \infty}$, we have
  that
  \begin{equation}\label{eq.cont.diff}
   P \seq \sum_{j=0}^\mu a_j \nabla^j \colon {W_{\nabla}^{k+\mu ,
       p}}(M; E) \, \to \, { W_{\nabla}^{k, p}}(M; E),\ \ 0 \leq k
   \leq \ell\,,
  \end{equation}
  is well-defined and continuous.
\end{enumerate}
\end{lemma}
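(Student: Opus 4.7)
The plan is to deduce both parts directly from the definition of the $\nabla$-Sobolev norm (Definition \ref{def.Sobolev}), combined with Corollary \ref{prop-Sobolev} for the multiplication by coefficients in part (2).

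For part (1), I would argue straight from the definition of the norm on $W^{k,p}_\nabla(M; T^*M \otimes E)$. The key observation is that the connection on $T^{*\otimes j}M \otimes (T^*M \otimes E)$ is the one induced by the Levi-Civita connection on $TM$ and the connection $\nabla^E$, and under the canonical identification $T^{*\otimes j}M \otimes T^*M \otimes E \simeq T^{*\otimes (j+1)}M \otimes E$ we have $\nabla^j(\nabla u) = \nabla^{j+1} u$. Thus for $1 \le p < \infty$,
\begin{equation*}
\|\nabla u\|_{W^{k,p}_\nabla(M;T^*M\otimes E)}^p \seq \sum_{j=0}^{k} \|\nabla^{j+1} u\|_{L^p}^p \seq \sum_{i=1}^{k+1} \|\nabla^{i} u\|_{L^p}^p \le \|u\|_{W^{k+1,p}_\nabla(M;E)}^p,
\end{equation*}
and the $p = \infty$ case is analogous with $\max$ replacing the sum. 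This gives the continuity of $\nabla$ with operator norm at most $1$.

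For part (2), I would proceed term by term in the sum $P = \sum_{j=0}^\mu a^{[j]} \nabla^j$. Iterating part (1) shows that $\nabla^j : W^{k+\mu,p}_\nabla(M;E) \to W^{k+\mu-j,p}_\nabla(M; T^{*\otimes j}M \otimes E)$ is continuous for each $j \in \{0, 1, \ldots, \mu\}$. Since $k + \mu - j \ge k$, the natural inclusion $W^{k+\mu-j,p}_\nabla \hookrightarrow W^{k,p}_\nabla$ is continuous. The coefficient $a^{[j]}$ lies in $W^{\ell,\infty}_\nabla(M; \Hom(T^{*\otimes j}M \otimes E; F))$, hence, because $k \le \ell$, also in $W^{k,\infty}_\nabla$. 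Corollary \ref{prop-Sobolev} (applied with the exponent $q = p$) then yields that the evaluation $(a^{[j]}, v) \mapsto a^{[j]} v$ is continuous from $W^{k,\infty}_\nabla(M; \Hom(T^{*\otimes j}M \otimes E; F)) \times W^{k,p}_\nabla(M; T^{*\otimes j}M \otimes E)$ into $W^{k,p}_\nabla(M; F)$. Composing these three continuous maps and summing over $j$ gives the claimed continuity of $P : W^{k+\mu,p}_\nabla(M; E) \to W^{k,p}_\nabla(M; F)$.

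There is no serious obstacle here: both statements are essentially unravellings of the definitions, once one has Corollary \ref{prop-Sobolev} and the induced-connection conventions from Remark \ref{rem.better} at hand. The only point requiring a little care is the natural identification $\nabla^j(\nabla u) \simeq \nabla^{j+1}u$ used in part (1), which is built into the definition of the iterated covariant derivative stated just before Definition \ref{def.Sobolev}; once that is granted, the bound is an equality of sums of $L^p$ norms. (I also note that the target bundle in \eqref{eq.cont.diff} should read $F$ rather than $E$, correcting an apparent typo in the statement.)
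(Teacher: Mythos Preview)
Your proof is correct and follows essentially the same approach as the paper: for part (1) you use that $\nabla^j(\nabla u)=\nabla^{j+1}u$ and read off the norm bound from Definition~\ref{def.Sobolev}, and for part (2) you combine iterated applications of part (1) with Corollary~\ref{prop-Sobolev} for the multiplication by the coefficients $a^{[j]}$, exactly as the paper does. Your version is slightly more explicit (writing out the norm identity and the intermediate inclusion $W^{k+\mu-j,p}_\nabla \hookrightarrow W^{k,p}_\nabla$), and your observation about the typo $E\to F$ in the target of \eqref{eq.cont.diff} is correct.
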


\begin{proof}
For all $j \le k$ and $u \in W_{\nabla}^{k+1, p}(M; E)$, we have
$\nabla^j \nabla u \in L^2(T^{^\otimes (j+1)} M \otimes E)$, by
definition, and hence $\nabla u \in W_{\nabla}^{k, p}(M; E)$, again by
definition. The continuity in the second part follows by combining the
first part with Proposition \ref{prop-Sobolev}, which gives the
continuity of the maps
\begin{equation*}
    W_{\nabla}^{k, \infty}(M; \Hom( TM^{*\otimes j} \otimes E;
      F) ) \otimes W_{\nabla}^{k,p}(M; T^*M^{\otimes j}\otimes E) \,
    \to \, W_{\nabla}^{k, p}(M; F)\, .
\end{equation*}
In particular, $P$ is well-defined with the stated domain and range.
\end{proof}

We want to extend this mapping property to other spaces. It extends to
$k \ge 0$ real immediately by interpolation, since our fractional
order Sobolev spaces were defined by interpolation, see, for instance,
\cite{LionsMagenes1, Taylor1}, or \cite[Chapter 2]{TriebelBook}.
To extend also to $k \le 0$, we shall need also the following basic
algebraic properties, whose statements rely on the following
notation.

\begin{notation}\ref{not.Diff}
Recall that $\Diff^{\mu}(M; E, F)$is the set of $\nabla$--differential
operators $\CIc(M; E) \to \CIc(M; F)$ of order $\le \mu$ with smooth
coefficients. (On the rare occasions when we shall need to show the
dependence on the connection, we shall also write
$\Diff_{\nabla}^{\mu}(M; E, F)$ for this space). Similarly,
$\Diff_b^{\mu}(M; E, F) \subset \Diff^{\mu}(M; E, F)$ is the set of
$\nabla$--differential operators of order $\le \mu$ with
$\CIb$-coefficients.  We shall also write:
\begin{itemize}
\item $\Diff^\mu(M; E) := \Diff^\mu(M; E,
  E)$ and $\Diff_b^\mu(M; E) := \Diff_b^\mu(M; E, E)$;

\item $\Diff^\infty(M; E) = \cup_\mu \Diff^\mu(M; E)$ and
  $\Diff_b^\infty(M; E) = \cup_\mu \Diff_b^\mu(M; E)$, which will be
  seen to be algebras in the next proposition.

\item We will omit $E$ from the notation when $E = F = \CC$. Thus
  $\Diff^\mu(M) := \Diff^\mu(M; \CC) = \Diff^\mu(M; \CC, \CC)$ and
  $\Diff_b^\mu(M) := \Diff_b^\mu(M; \CC) = \Diff_b^\mu(M; \CC, \CC)$,
  and so on.
\end{itemize}
\end{notation}

\begin{proposition}\label{prop.diff.algebra}
We use the notation introduced in Definition \ref{def.diff.op} and
\ref{not.Diff}.
\begin{enumerate}[(i)]
\item The spaces $\Diff^{\mu}(M; E, F)$ and $\Diff_b^{\mu}(M; E, F)$
  are linear vector spaces.

\item \label{item.comp.prop} Let $P \in \Diff^j(M; E, F)$ and $Q \in
  \Diff^N(M; F, G)$, then $QP \in \Diff^{N+j}(M; E, G)$.

\item If $P$ and $Q$ have $\CIb$--coefficients, then $QP$ has
  $\CIb$-coefficients as well.

\item \label{item.pda.iv} In particular,
\begin{equation*}
  \Diff^\infty(M; E) \ede \bigcup_\mu \Diff^\mu(M; E, E) \mbox{ and }
  \Diff_b^\infty(M; E) \ede \bigcup_\mu \Diff_b^\mu(M; E, E)
\end{equation*}
are algebras.
\end{enumerate}
\end{proposition}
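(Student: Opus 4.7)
The plan is to prove (i) directly from Definition \ref{def.diff.op}, then reduce (ii) and (iii) to a single key claim about how $\nabla$ composes with a $\nabla$-differential operator, and finally deduce (iv) as a formal consequence.

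Part (i) is immediate: if $P = \sum_{j=0}^{\mu} a^{[j]} \nabla^j$ and $P' = \sum_{j=0}^{\mu} a'^{[j]} \nabla^j$ are two $\nabla$-differential operators of order at most $\mu$, then $\alpha P + \beta P' = \sum_{j=0}^{\mu} (\alpha a^{[j]} + \beta a'^{[j]}) \nabla^j$ is again of order at most $\mu$, and its coefficients lie in $\CI$ (respectively $\CIb$) because these coefficient spaces are closed under linear combinations.

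The engine for (ii) and (iii) is the following key claim: \emph{if $P \in \Diff^j(M; E, F)$, then $\nabla \circ P \in \Diff^{j+1}(M; E, T^*M \otimes F)$, and if the coefficients of $P$ lie in $\CIb$ then so do those of $\nabla P$.} To prove the claim I would write $P = \sum_{k=0}^j a^{[k]} \nabla^k$, viewing each $a^{[k]}$ as a section of $\Hom(T^{*\otimes k}M \otimes E; F)$, and apply the product rule from Remark \ref{rem.better} termwise:
\begin{equation*}
    \nabla(a^{[k]} \nabla^k u) \seq \nabla(a^{[k]}) \nabla^k u + (1 \otimes a^{[k]}) \nabla^{k+1} u \, .
\end{equation*}
Thus $\nabla P$ is a sum of terms, each of the form (smooth coefficient) $\cdot \nabla^m u$ with $m \le j+1$; the new coefficients are obtained from the $a^{[k]}$ by taking at most one covariant derivative together with the natural tensor permutations (which the paper agrees to suppress from the notation), so they remain in $\CI$ (respectively $\CIb$).

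Granting the claim, (ii) follows by induction on $N = \ord(Q)$. For $N = 0$, $Q$ is left multiplication by $b^{[0]} \in \CI(M; \Hom(F; G))$, so $QP = b^{[0]} \circ P$ is visibly in $\Diff^{j}(M; E, G)$. In general, writing $Q = \sum_{i=0}^{N} b^{[i]} \nabla^i$ gives $QP = \sum_{i=0}^{N} b^{[i]} \nabla^i P$, and $i$-fold iteration of the key claim shows that $\nabla^i P \in \Diff^{j+i}(M; E, T^{*\otimes i}M \otimes F)$; multiplying by the smooth section $b^{[i]}$ yields a $\nabla$-differential operator of order $\le N + j$, and by (i) the sum lies in $\Diff^{N+j}(M; E, G)$. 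Exactly the same bookkeeping, tracking $\CIb$ at each step, gives (iii). Item (iv) is then immediate, since the unions $\Diff^\infty(M; E) = \bigcup_\mu \Diff^\mu(M; E)$ and $\Diff_b^\infty(M; E) = \bigcup_\mu \Diff_b^\mu(M; E)$ inherit the vector space structure from (i) and are closed under composition by (ii)--(iii). The main (very mild) obstacle is purely notational: one must be comfortable with the tensor rearrangements implicit in applying Remark \ref{rem.better} repeatedly, and confirm that they do not create any new derivatives of $u$ beyond order $j + 1$ at each application of $\nabla$.
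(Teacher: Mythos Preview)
Your proof is correct and follows essentially the same route as the paper: both arguments reduce everything to the product rule $\nabla(a\,\nabla^k u) = (\nabla a)\,\nabla^k u + (1\otimes a)\,\nabla^{k+1} u$ from Remark~\ref{rem.better} and then induct on the order of the outer operator $Q$. The only cosmetic difference is that the paper first reduces both $P$ and $Q$ to single monomials $a\nabla^N$ and $b\nabla^j$ by linearity and runs the induction directly on the product, whereas you isolate the step ``$\nabla\circ P \in \Diff^{j+1}$'' as a standalone claim and then iterate it across the terms of $Q$; the underlying computation is identical.
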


\begin{proof}
The statement (i) follows right away from the definitions of the
spaces $\Diff_b^{\mu}(M; E, F) \subset \Diff^{\mu}(M; E, F)$ and the
fact that the spaces of $\CIb \subset \CI$-sections (over $M$ of
various vector bundles) are vector spaces.

By linearity (part (i)), it is enough to assume that $Q = b \nabla^j$
and $P = a \nabla^N$, where $a$ and $b$ are suitable sections of
endomorphism bundles, as in Definition \ref{def.diff.op}. We shall
prove the statements (ii) and (iii) by induction on $j$.

Let us prove the statement (ii). If $j = 0$, this is true since $QP =
ba \nabla^N$ and $ba$ is a smooth endomorphism. Moreover, Remark
\ref{rem.better} gives
\begin{equation}
   \label{eq.product}
   (b \nabla^j) \circ (a \nabla^N) (u) \seq b \nabla^{j-1} \big
         [(\nabla a)\nabla^N u + (1 \otimes a)\nabla^{N+1}u \big ]\,,
\end{equation}
so the first statement follows by induction on $j$, since $(\nabla
a)\nabla^N + (1 \otimes a)\nabla^{N+1}$ is also a
$\nabla$-differential operator with smooth coefficients (if $a \in
\CI$, then $\nabla (a)$ and $1 \otimes a$ are also in $\CI$).

For the statement (iii), let us assume that $P$ and $Q$ have
$\CIb$--coefficients and prove that $QP$ has the same property. We
then proceed in exactly the same way, using the formula
\eqref{eq.product}. So assume $a, b \in \CIb$. When $j = 0$, we obtain
similarly that $QP = ba\nabla^N$ and $ba$ is in $\CIb$ due to
Proposition \ref{prop-Sobolev}. For $j > 0$, we notice that $(\nabla
a)\nabla^N + (1 \otimes a)\nabla^{N+1}$ has $\CIb$ coefficients (if $a
\in \CIb$, then $\nabla (a)$ and $1 \otimes a$ are also in $\CIb$),
which yields the induction step.

The statement (iv) follows right away from the previous two.
\end{proof}

We can now address the dependence of the definitions of
$\nabla$-Sobolev spaces and $\nabla$-differential operators on
$\nabla$.

\begin{remark}
The composition property proved in Proposition
\ref{prop.diff.algebra}\eqref{item.comp.prop} gives that, in
particular, if we are given two connections, $\widetilde \nabla$ and
$\nabla$ on $E$ (so that $\widetilde \nabla - \nabla =: A \in
\CI(M; \End(E))$), then $\Diff_{\nabla}^\mu(M; E, F) =
\Diff_{\widetilde \nabla}^\mu(M; E, F)$, which justifies dropping the
index $\nabla$. The analogous equality $\Diff_{b, \nabla}^\mu(M; E, F)
= \Diff_{b, \widetilde \nabla}^\mu(M; E, F)$ for operators with
$\CIb$-coefficients is true if, and only if, $A \in
\CIb(M; \End(E))$. This is stated as part of the following
proposition.
\end{remark}

We can now prove the independence on the connection $\nabla$ of our
definitions, under certain conditions.

\begin{proposition}\label{prop.ind.A}
Let $A \in \CI(M; \End(E))$, $A^* = -A$, and $\widetilde \nabla =
\nabla + A$. Let also $p \in [1, \infty]$ and $\mu \in \NN$. We then
have the following:
\begin{enumerate}[(i)]

\item $A \in W_{\nabla}^{\infty, \infty}(M; \End(E))$ if, and only if,
  $\Diff_{b, \nabla}^\mu(M; E, F) = \Diff_{b, \widetilde
  \nabla}^\mu(M; E, F)$.

\item If $A \in W_{\nabla}^{\ell-1, \infty}(M; \End(E))$, then there
  is $C_\ell \ge 1$ that depends only on
  $\|A\|_{W_{\nabla}^{\ell-1,\infty }(M)}$ such that, for all $u \in
  \CIc(M; E)$,
\begin{equation*}
    C_\ell^{-1} \|u\|_{W_{\nabla}^{\ell, p}(M)} \le
    \|u\|_{W_{\widetilde \nabla}^{\ell, p}(M)} \le C_\ell
    \|u\|_{W_{\nabla}^{\ell, p}(M)}\,.
\end{equation*}
Consequently,
\begin{equation*}
    W_{\nabla}^{\ell, p}(M; E) \seq W_{\widetilde \nabla}^{\ell, p}(M;
    E).
\end{equation*}
\end{enumerate}
\end{proposition}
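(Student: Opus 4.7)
The plan is to prove part (ii) first by induction on $\ell$, and then to derive part (i) by combining (ii) with a direct inspection of orders. For (ii), the base case $\ell = 0$ is immediate since both norms coincide with $\|u\|_{L^p(M;E)}$, so $C_0 = 1$. The key ingredient for the inductive step is the Leibniz-type expansion, obtained by iterating $\widetilde\nabla = \nabla + A$ on each tensor bundle $T^{*\otimes j}M \otimes E$ (on which the induced difference of connections is still $1\otimes A$, since the Levi-Civita piece on $T^*M$ is shared by both extensions):
\begin{equation*}
  \widetilde\nabla^\ell u \seq \nabla^\ell u + \sum_{\mathbf{j}} c_{\mathbf{j}}\, (\nabla^{j_1}A) \cdot (\nabla^{j_2}A) \cdots (\nabla^{j_k}A) \cdot \nabla^{j_{k+1}}u\,,
\end{equation*}
where the finite sum runs over tuples $\mathbf{j} = (j_1,\ldots,j_{k+1})$ with $k \ge 1$, each $j_i \le \ell - 1$ for $i \le k$, and $j_{k+1} \le \ell$; the $c_{\mathbf{j}}$ are combinatorial constants and the dots denote the appropriate pointwise tensor/contraction operations. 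This is proved by a separate induction on $\ell$: apply $\widetilde\nabla = \nabla + A$ to the expansion at level $\ell - 1$ and use the Leibniz rule for $\nabla$. Crucially, each step raises any single index $j_i$ by at most one unit, so $\nabla^\ell A$ never appears.

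Taking $L^p$-norms and applying Corollary \ref{prop-Sobolev} (the $L^\infty \cdot L^p \hookrightarrow L^p$ estimate), each summand is bounded by $\|\nabla^{j_1}A\|_\infty \cdots \|\nabla^{j_k}A\|_\infty\, \|\nabla^{j_{k+1}}u\|_{L^p}$, and the hypothesis $A \in W_\nabla^{\ell-1,\infty}$ controls each $\|\nabla^{j_i}A\|_\infty \le \|A\|_{W_\nabla^{\ell-1,\infty}}$, since $j_i \le \ell - 1$. Combined with the inductive hypothesis for $\|\widetilde\nabla^j u\|_{L^p}$ with $j < \ell$, this gives the upper bound $\|u\|_{W_{\widetilde\nabla}^{\ell,p}} \le C_\ell \|u\|_{W_\nabla^{\ell,p}}$, with $C_\ell$ depending polynomially on $\|A\|_{W_\nabla^{\ell-1,\infty}}$. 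The reverse inequality is obtained by symmetry: write $\nabla = \widetilde\nabla + (-A)$ and run the same argument, after first using the already-established upper bound to control $\|A\|_{W_{\widetilde\nabla}^{\ell-1,\infty}}$ by $\|A\|_{W_\nabla^{\ell-1,\infty}}$ (thus avoiding circularity). This establishes the two-sided estimate and hence $W_\nabla^{\ell,p}(M;E) = W_{\widetilde\nabla}^{\ell,p}(M;E)$ with equivalent norms.

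For (i), the forward implication uses the same expansion: under $A \in W_\nabla^{\infty,\infty} = \CIb$, every $\nabla^j A$ lies in $\CIb$, so by Proposition \ref{prop-Sobolev} the coefficients appearing in the expansion of $\widetilde\nabla^k$ also lie in $\CIb$. Thus each $\widetilde\nabla^k$ is a $\nabla$-differential operator with $\CIb$-coefficients, and by the composition property in Proposition \ref{prop.diff.algebra} any sum $P = \sum_k b^{[k]} \widetilde\nabla^k$ with $b^{[k]} \in \CIb$ lies in $\Diff_{b,\nabla}^\mu(M;E,F)$; the reverse inclusion is symmetric. Conversely, if $\Diff_{b,\nabla}^\mu = \Diff_{b,\widetilde\nabla}^\mu$ (with $\mu \ge 1$), then $\widetilde\nabla \in \Diff_{b,\widetilde\nabla}^1 \subset \Diff_{b,\nabla}^\mu(M;E,T^*M\otimes E)$; reading off the order-zero part yields $A = \widetilde\nabla - \nabla \in \Diff_{b,\nabla}^0$, i.e.\ $A \in \CIb(M;\End(E)) = W_\nabla^{\infty,\infty}$, as required.

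The main obstacle I anticipate is establishing the Leibniz-type expansion above with careful tracking of the orders of derivatives of $A$ and $u$; once this combinatorial bookkeeping is in place, the norm estimates reduce to routine applications of Corollary \ref{prop-Sobolev}, and the algebraic identification of operator classes in (i) falls out of the same expansion.
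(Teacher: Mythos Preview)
Your proposal is correct and rests on the same underlying idea as the paper---iterate $\widetilde\nabla = \nabla + A$ and use the $L^\infty\cdot L^p\to L^p$ multiplication of Proposition~\ref{prop-Sobolev}---but the execution for part~(ii) differs. You expand $\widetilde\nabla^\ell u$ fully into a sum of monomials $(\nabla^{j_1}A)\cdots(\nabla^{j_k}A)\cdot\nabla^{j_{k+1}}u$ and then bound term by term; the paper instead runs a cleaner recursive estimate, writing
\[
  \|u\|_{W_\nabla^{\ell,p}}^p \le \|u\|_{W_\nabla^{\ell-1,p}}^p + \|\nabla u\|_{W_\nabla^{\ell-1,p}}^p
  = \|u\|_{W_\nabla^{\ell-1,p}}^p + \|\widetilde\nabla u - Au\|_{W_\nabla^{\ell-1,p}}^p
\]
and invoking the induction hypothesis at level $\ell-1$ on each piece. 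This avoids the combinatorial bookkeeping you flagged as the main obstacle, at the cost of a slightly less explicit dependence of $C_\ell$ on $\|A\|_{W_\nabla^{\ell-1,\infty}}$. For part~(i) the paper also argues slightly differently: rather than re-using the expansion, it appeals directly to the composition property of Proposition~\ref{prop.diff.algebra}(iii), which already says that a product of $\CIb$-coefficient operators has $\CIb$ coefficients; your expansion is essentially a hands-on reproof of that fact in this special case. Your converse for~(i) matches the paper's exactly.
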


\begin{proof}
The first statement follows from Proposition \ref{prop.diff.algebra}.
Indeed, the third statement of this proposition combined with the
assumption that $A \in C_{b}^{\infty }(M; \End(E))$ shows that the
operator $(\nabla +A)^j$ has $\CIb$--coefficients, for all $0\leq
j\leq \mu $. Moreover, any operator $P$ in $\Diff_{b, \widetilde
  \nabla}^\mu(M; E, F)$ has the form $P =\sum_{j=0}^{\mu}\widetilde
a^{[j]}\widetilde\nabla^j$, where $\widetilde a^{[j]}$ are
$\CIb$--coefficients. Thus, $P$ can be written in the equivalent form
$P=\sum_{j=0}^{\mu}\widetilde a^{[j]}(\nabla
+A)^j=\sum_{j=0}^{\mu}b^{[j]}\nabla ^j$, where $b^{[j]}$ are
$\CIb$--coefficients as well. This shows the inclusion $\Diff_{b,
  \widetilde\nabla}^\mu(M; E, F)\subseteq \Diff_{b, \nabla}^\mu(M; E,
F)$. The converse inclusion follows by symmetry.

Next we assume that $\Diff_{b, \nabla}^\mu(M; E, F) = \Diff_{b,
  \widetilde \nabla}^\mu(M; E, F)$. Then the operators $\nabla$ and
$\widetilde \nabla =\nabla +A$ belong to $\Diff_{b, \nabla}^\mu(M; E,
F)$, and, thus, $A = \widetilde \nabla - \nabla \in C_{b}^{\infty }(M; \End(E))$,
since the later is a vector space.

The second statement is proved by induction on $\ell$ using
Propositions \ref{prop-Sobolev} and \ref{prop.diff.algebra}.  Indeed,
first of all, if $\ell =0$ then $W_{\nabla}^{0, p}(M; E)=L^p(M; E) =
W_{\widetilde\nabla}^{0, p}(M; E)$ and the property follows, with $C_0
=1$.

Assume now that the property we want to prove holds for $\ell - 1 \ge
0$ and show it for $\ell$. The definition of the norm in
$W_{\nabla}^{\ell , p}(M;E)$ and the induction hypothesis imply that
\begin{align*}
    \|u\|_{W_{\nabla}^{\ell , p}(M)}^p & \le \|u\|_{W_{\nabla}^{\ell
        -1,p}(M)}^p + \|\nabla u\|_{W_{\nabla}^{\ell -1,p}(M)}^p \\
    & = \|u\|_{W_{\nabla}^{\ell -1,p}(M)}^p + \|\widetilde \nabla u -
    A u\|_{W_{\nabla}^{\ell -1,p}(M)}^p \\
    & \leq C_{\ell-1}^p \|u\|_{W_{\widetilde\nabla}^{\ell -1,p}(M)}^p
    + 2^{p-1} \|\widetilde \nabla u\|_{W_{\nabla}^{\ell -1,p}(M)}^p +
    2^{p-1}\| A u \|_{W_{\nabla}^{\ell -1,p}(M)}^p \\
    & \leq C_{\ell-1}^p (1 + 2^{p-1}) \| u \|_{W_{\widetilde
        \nabla}^{\ell, p}(M)}^p + 2^{p-1}C_{\ell -1 , p}^p \|
    A\|^p_{W_{\nabla}^{\ell -1,\infty}(M)} \| u \|_{W_{\nabla}^{\ell
        -1,p}(M)}^p \\
    & \leq C_{\ell-1}^p 2^{p-1} \Big [2+ C_{\ell -1 , p}^p \|
      A\|^p_{W_{\nabla}^{\ell -1,\infty}(M)} \Big ] \| u
    \|_{W_{\widetilde \nabla}^{\ell,p}(M)}^p \,,
\end{align*}
where $C_{\ell-1, p}$ is the (absolute) constant of Proposition
\ref{prop-Sobolev}. The first desired inequality for the norm then
follows if $C_{\ell-1}^p 2^{p-1} \Big [2+ C_{\ell -1 , p}^p \|
  A\|^p_{W_{\nabla}^{\ell -1,\infty}(M)} \Big ] =:
C_{\ell}^{p}$. Since we can bound $\| A\|^p_{W_{\widetilde
    \nabla}^{\ell -1,\infty}(M)}$ in terms of $\|
A\|^p_{W_{\nabla}^{\ell -1,\infty}(M)}$, by the induction hypothesis,
the second of the desired inequalities for the norms follows by
symmetry. In particular, we deduce the equality $W_{\nabla}^{\ell,
  p}(M; E) \seq W_{\widetilde \nabla}^{\ell, p}(M;E)$.
\end{proof}

\section{Mixed differential operators and totally bounded vector fields}
\label{sec.3}

In this section we look at a different type of differential
  operators, which we call ``mixed differential operators,'' since
  they will be used to relate the $\nabla$-differential operators of
  the previous section to the classical differential operators.  To
  study them, we introduce the Fr\'echet finiteness condition (FFC)
  and we show that if (FFC) is satisfied, then the mixed differential
  operators coincide with the $\nabla$ ones.

\subsection{The Fr\'echet finite generation condition}
It is known \cite[p. 71]{Milnor-Stasheff} that every vector bundle $E \to M$ on a manifold
has an embedding $\Phi : E \to M \times \RR^N$ into a trivial
bundle. In this subsection, we shall use the existence of such an
embedding for $E = TM$ and we shall deduce some geometric consequences.

\begin{remark}\label{rem.generation1}
Let $\Phi : TM \to M \times \RR^N$ be a smooth embedding of the
tangent bundle into a trivial vector bundle. We endow the trivial
vector bundle with the constant metric. Then the transpose $\Phi^\top
: M \times \RR^N \to TM$ is onto. (The transpose is the analog of the
adjoint, but in the real case.) Moreover, $\Phi^\top \Phi$ is a
smooth, invertible section of $\End(TM)$. Let $\Psi := (\Phi^\top
\Phi)^{-1} \Phi^\top $, so that $\Psi \in \CI(M; \Hom(\RR^N ; TM)$ and
$\Psi \Phi = 1$. Let
\begin{equation*}
  Z_1 \ede \Psi(e_1),\ Z_2 \ede \Psi(e_2),\ \ldots\ ,\ Z_N \ede
  \Psi(e_N) \, \in\, \CI(M; TM)
\end{equation*}
be the vector fields corresponding to the constant basis
$(e_j)_{j=1}^N$ of $\RR^N$ via $\Phi^\top$. Since $\Psi \in \CI(M;
\Hom(\RR^N; TM))$, we have that $Z_j$ are all in $\CI(M; TM)$. Let
\begin{equation*}
  \xi_j \ede p_j \circ \Phi : TM \to \RR
\end{equation*}
be the 1-form obtained from the projection of $\RR^N$ onto the $j$th
component. Then $\xi_j \in \CI(M; T^*M)$ and the relation $\Psi \Phi =
1$ gives, for every $X \in \CI(M; TM)$,
  \begin{equation*}
    X \seq \Psi(\Phi(X)) \seq \Psi \Big (\sum_{j=1}^N \xi_j(X) e_j
    \Big ) \seq \sum_{j=1}^N \xi_j(X) Z_j\,.
  \end{equation*}
In particular,
  \begin{equation*}
    \CI(M; TM) \seq \sum_{j=1}^N \CI(M) Z_j\,.
  \end{equation*}
Let $\omega \in \CIb(M; T^*M)$. By evaluating $\omega$ in the above
relation, we obtain that $\omega(X) = \sum_{j=1}^N \xi_j(X)
\omega(Z_j)$, and hence we have the dual relation $\omega \seq
\sum_{j=1}^N \omega(Z_j) \xi_j\,.$
\end{remark}

We now take a look at a different global definition of differential
operators.

\begin{definition}\label{def.diff.mixed}
We let $\widetilde \Diff^\mu(M; E, F)$ to be the set of all linear
operators of order $\le \mu$ linearly generated by $a \nabla_{X_1}
\ldots \nabla_{X_r}$, $0 \le r \le \mu$, $a \in \CI(M; \Hom(E, F))$,
$X_j \in \CI(M; TM)$. An operator $P$ of this type will be called a {\em
  mixed differential operator of order $\le \mu$.} If $a \in \CIb(M;
\Hom(E, F))$ and all $X_j \in \maW_b(M) := \CIb(M; TM)$, then $P$ is
called a a {\em mixed differential operator of order $\le \mu$ with
  $\CIb$-coefficients}, and the set of all such operators is denoted by $\widetilde
\Diff_b^\mu(M; E, F)$.
\end{definition}

As we will see in Subsection \ref{sec.4}, mixed differential operators
are, sometimes, easier to deal with than $\nabla$-differential
operators and form a convenient intermediate class between $\nabla$-
and classical differential operators. The following lemma is standard,
except for the fact that the system $(Z_j)$ is only a system of
generators of $\CI(M; TM)$ as a $\CI(M)$-module, and not a basis.

\begin{lemma}\label{lemma.just.smooth}
Let $Z_j$ and $\xi_j$ be as in Remark \ref{rem.generation1}.
\begin{enumerate}[(i)]
   \item If $X \in \CI(M; TM)$, then $\nabla_X \in \Diff^{1}(M; E,
     T^*M \otimes E)$.

   \item\label{item.FFC+div.i} $\nabla^E : \CIc(M; E) \to \CIc(M; T^*M
     \otimes E)$ satisfies $\nabla^E = \sum_{j=1}^N \xi_j \otimes
     \nabla^E_{Z_j}$.

    \item \label{item.div} For any $X \in \CI(M; TM)$, we have
      $\div(X) \in \CI(M)$.

    \item We have $\nabla_{Z_i}Z_j= \sum_{k} G_{ij}^k Z_k$, where
      $G_{ij}^k \in \CI(M)$.

    \item We have $[Z_i, Z_j] = \sum_{k} L_{ij}^k Z_k$, where
      $L_{ij}^k \in \CI(M)$.
\end{enumerate}
\end{lemma}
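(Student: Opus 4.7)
The proof of each clause is short once one has Remark~\ref{rem.generation1} in hand; the key technical point is that although $(Z_j)$ is only a generating system for $\CI(M;TM)$ (not a basis), the frame $(\xi_j)$ produces \emph{canonical} smooth coefficients via $X = \sum_j \xi_j(X) Z_j$ for any $X \in \CI(M; TM)$. This single formula, together with the smoothness of $\xi_j$ and $Z_j$, drives the whole argument.

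For~(i), I would simply observe that $\nabla_X = i_X \circ \nabla$ with $i_X \in \CI(M; \Hom(T^*M, \CC))$ whenever $X \in \CI(M;TM)$, so in the notation of Definition~\ref{def.diff.op} the operator has $a^{[0]}=0$ and $a^{[1]} = i_X \otimes 1_E \in \CI(M; \Hom(T^*M \otimes E, E))$, exhibiting $\nabla_X$ as a $\nabla$-differential operator of order $1$. For~(ii), since $X = \sum_j \xi_j(X) Z_j$ for every $X$, contraction with $X$ gives $i_X(\nabla^E u) = \nabla^E_X u = \sum_j \xi_j(X)\, \nabla^E_{Z_j} u$; this identity holds for every $X$, so $\nabla^E u = \sum_j \xi_j \otimes \nabla^E_{Z_j} u$, as desired.

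For~(iii), $\div(X) = \tr(\nabla^{TM} X)$ where $\nabla^{TM} X \in \CI(M; T^*M \otimes TM) \simeq \CI(M; \End(TM))$ because the Levi-Civita connection preserves smoothness; the fiberwise trace is a smooth bundle map, hence $\div(X) \in \CI(M)$. For~(iv), the point is that $\nabla_{Z_i} Z_j \in \CI(M; TM)$ (both $Z_i$ and $Z_j$ are smooth, and $\nabla^{LC}$ sends smooth sections to smooth sections). Applying the identity of Remark~\ref{rem.generation1} to $X := \nabla_{Z_i} Z_j$ yields
\begin{equation*}
   \nabla_{Z_i} Z_j \seq \sum_{k=1}^N \xi_k(\nabla_{Z_i} Z_j)\, Z_k\,,
\end{equation*}
so one sets $G_{ij}^k := \xi_k(\nabla_{Z_i} Z_j)$, which lies in $\CI(M)$ since $\xi_k \in \CI(M; T^*M)$ and $\nabla_{Z_i} Z_j \in \CI(M; TM)$.

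For~(v), the fastest route is to invoke the torsion-freeness of the Levi-Civita connection: $[Z_i, Z_j] = \nabla_{Z_i} Z_j - \nabla_{Z_j} Z_i$, which is smooth, and then apply the same generation argument as in~(iv) to obtain $L_{ij}^k := \xi_k([Z_i, Z_j]) = G_{ij}^k - G_{ji}^k \in \CI(M)$. The only mild subtlety in the whole proof is precisely the non-uniqueness of the coefficients in~(iv) and~(v); that is handled cleanly by choosing the \emph{particular} coefficients $\xi_k(\cdot)$, which are manifestly smooth because they are obtained by pairing smooth vector fields with smooth 1-forms.
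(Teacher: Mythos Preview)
Your proof is correct and follows essentially the same approach as the paper: both use $\nabla_X = i_X \circ \nabla$ for~(i), the contraction-against-arbitrary-$X$ argument for~(ii), and the explicit choice $G_{ij}^k := \xi_k(\nabla_{Z_i}Z_j)$ for~(iv). The only notable differences are that for~(iii) the paper writes out $\div(X)$ via a local orthonormal frame and then re-expresses it through the $Z_k$, $\xi_k$ system (a computation that is really aimed at the later $\CIb$ version, Lemma~\ref{lemma.FFC+div}), whereas your one-line ``trace of a smooth endomorphism'' argument is cleaner for the smooth case; and for~(v) the paper argues directly from $[Z_i,Z_j]\in\CI(M;TM)$ rather than reducing to~(iv) via torsion-freeness, but both routes are equally short.
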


\begin{proof}
Let $Z_1, Z_2, \ldots, Z_N \in \CI(M; TM)$ be the vector fields
introduced in Remark \ref{rem.generation1}.

To prove (i), let $i_X : T^*M \otimes E \to E$ be the contraction with
the vector $X \in \CI(M ;TM)$. Then $i_X \in \CI(M; \Hom(T^*M \otimes E;
E))$ since $X \in \CI(M; TM)$, and hence $\nabla_X = i_X \circ \nabla$ is
a $\nabla$-differential operator with smooth coefficients, by
definition.

To prove (ii), let $u \in \CIc(M; E)$ and $X \in \CIc(M; TM)$ be
arbitrary.  Let $i_X : T^*M \otimes E \to E$ be the contraction with
the vector $X \in \CIc(M; TM)$, $i_X(u) = \langle X, u \rangle$. Then
the formula $X = \sum_{j=1}^N \xi_j(X) Z_j$ of Remark
\ref{rem.generation1} gives
  \begin{equation*}
    \langle X, \nabla^E(u) \rangle \seq \nabla^E_X (u) \seq
    \sum_{j=1}^N \xi_j(X) \nabla^E_{Z_j}(u) \seq \Big \langle X,
    \sum_{j=1}^N \xi_j \otimes \nabla^E_{Z_j}(u) \Big \rangle\,.
  \end{equation*}
Since $u \in \CIc(M ; T^*M)$ and $X \in \CIc(M; TM)$ were arbitrary,
the result follows.

To prove (iii), let $X_1, X_2, \ldots, X_n$ be a local {\em
  orthonormal} basis of $TM$. Then we obtain
\begin{align*}
  \div(X) & \seq \tr (\nabla X)\\
  & \seq \sum_{i=1}^n (\nabla_{X_i}X , X_i)\\
  & \seq \sum_{j=1, k, l }^n \xi_k (X_j) \xi_l(X_j) (\nabla_{Z_k}X ,
  Z_l)\\
  & \seq \sum_{j=1, k, l }^n (\xi_k, \xi_l) (\nabla_{Z_k}X ,
  Z_l)
\end{align*}
since the map $T^*M \ni \xi \to \big ( (\xi, X_j) \big)_{j=1}^n$ is an
isometry (since $(X_i)$ was chosen to be an orthonormal basis). Hence, the result follows.

To prove (iv), we notice that, since $\nabla_{Z_i}Z_j \in \CI(M; TM)$,
we have $\nabla_{Z_i}Z_j = \sum_{k=1}^N \xi_k(\nabla_{Z_i}Z_j) Z_k$,
by Remark \ref{rem.generation1}, and hence we can take $G_{ij}^k :=
\xi_k(\nabla_{Z_i}Z_j)$, which is in $\CIb(M)$ by Proposition
\ref{prop-Sobolev}. For (v), we proceed in exactly the same way since
the Lie bracket of vector fields $[Z_i, Z_j] \in \CI(M; TM)$.
\end{proof}

We have the following generation property for mixed differential
operators.

\begin{proposition}
\label{prop.inclusion.mixed1}
Let $\mu ,\nu \in \NN$.
\begin{enumerate}[(i)]
\item $\widetilde \Diff^{\nu}(M; F, G) \widetilde \Diff^\mu(M;
  E, F) \subset \widetilde \Diff^{\mu + \nu}(M; E, G)$.

\item $\widetilde \Diff^\mu(M; E, F) = \Diff^\mu(M; E, F)$.

\item Let $Z_1, Z_2, \ldots, Z_N \in \CI(M)$ be a systems of
  generators for $\CI(M)$ as in \ref{rem.generation1}, then
  $\Diff^\mu(M; E, F)$ is linearly generated by $a
  \nabla_{X_1}\nabla_{X_2} \ldots \nabla_{X_r}$, where $r \le \mu$,
  $X_1, X_2, \ldots, X_r \in \{Z_1, Z_2, \ldots, Z_N\}$ and $a \in
  \CI(M; \Hom(E, F))$.
\end{enumerate}
\end{proposition}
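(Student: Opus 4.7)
The plan is to prove the three assertions in the order (i), (ii), (iii), with (iii) being a byproduct of the harder direction of (ii). For (i), by linearity it suffices to treat a composition $PQ$ with $P = b\,\nabla_{Y_1}\cdots \nabla_{Y_\nu}$ and $Q = a\,\nabla_{X_1}\cdots \nabla_{X_\mu}$, where $a,b$ are smooth sections of the appropriate $\Hom$-bundles and $X_i,Y_j\in \CI(M;TM)$. The only obstacle is the factor $a$ sitting between the two chains of covariant derivatives; the Leibniz rule of Remark \ref{rem.better},
\begin{equation*}
   \nabla_Y(a v) \seq (\nabla_Y a)\, v + a\, \nabla_Y v\,,
\end{equation*}
allows me to move $a$ one step to the left past $\nabla_{Y_\nu}$ at the cost of an extra summand whose coefficient $\nabla_{Y_\nu}a$ is again a smooth section of the corresponding $\Hom$-bundle. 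Iterating $\nu$ times rewrites $PQ$ as a finite linear combination of generators $c\,\nabla_{W_1}\cdots\nabla_{W_k}$ of $\widetilde\Diff^{\mu+\nu}(M;E,G)$ with $k\le \mu+\nu$.

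For the inclusion $\widetilde\Diff^\mu\subset \Diff^\mu$ in (ii), Lemma \ref{lemma.just.smooth}(i) places every $\nabla_X=i_X\circ \nabla$ in $\Diff^1$, and every smooth $a\in\CI(M;\Hom(E,F))$ defines an element of $\Diff^0$; the composition property of Proposition \ref{prop.diff.algebra}(ii) then gives $a\,\nabla_{X_1}\cdots \nabla_{X_r}\in \Diff^r(M;E,F)\subseteq\Diff^\mu(M;E,F)$. For the reverse inclusion $\Diff^\mu\subset\widetilde\Diff^\mu$, which will simultaneously yield (iii), I would prove by induction on $j$ that for every smooth $a\in\CI(M;\Hom(T^{*\otimes j}M\otimes E,F))$ the operator $u\mapsto a\,\nabla^j u$ is a finite linear combination of terms $c\,\nabla_{Z_{k_1}}\cdots\nabla_{Z_{k_r}}$ with $r\le j$, $k_\alpha\in\{1,\ldots,N\}$, and $c$ smooth. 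The case $j=0$ is trivial. For the inductive step, Lemma \ref{lemma.just.smooth}(ii) applied to the induced connection on $T^{*\otimes(j-1)}M\otimes E$ gives
\begin{equation*}
a\,\nabla^j u \seq \sum_{k=1}^N b_k\,\nabla_{Z_k}\bigl(\nabla^{j-1}u\bigr)\,,\qquad b_k\ede a\circ(\xi_k\otimes\cdot)\,,
\end{equation*}
with each $b_k$ smooth. Applying the Leibniz rule of Remark \ref{rem.better} to $b_k$ rewrites each summand as
\begin{equation*}
b_k\,\nabla_{Z_k}\bigl(\nabla^{j-1}u\bigr) \seq \nabla_{Z_k}\bigl(b_k\,\nabla^{j-1}u\bigr)-(\nabla_{Z_k}b_k)\,\nabla^{j-1}u\,.
\end{equation*}
The second term lies in $\widetilde\Diff^{j-1}$ by the inductive hypothesis applied to the smooth coefficient $\nabla_{Z_k}b_k$. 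The first term is a composition $\nabla_{Z_k}\circ T_k$, with $T_k\in\widetilde\Diff^{j-1}$ again by the inductive hypothesis and $\nabla_{Z_k}\in\widetilde\Diff^1$; part (i) then places the composition in $\widetilde\Diff^j$. Summing over $k$ closes the induction, and since a general $P\in\Diff^\mu(M;E,F)$ has the form $P=\sum_{j=0}^\mu a^{[j]}\nabla^j$, this delivers both the generation statement (iii) and the inclusion $\Diff^\mu\subset \widetilde\Diff^\mu$.

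The hardest part is the inductive step above: one must arrange for (i) to be established first so it can be invoked inside the induction, and one must verify that every application of Remark \ref{rem.better} to the induced connections on iterated tensor bundles preserves smoothness of the coefficients. Both are routine, but it is here that the smoothness of the generators $Z_k$ and the forms $\xi_k$ from Remark \ref{rem.generation1} is genuinely used; the remaining bookkeeping is mechanical.
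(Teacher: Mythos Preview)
Your proof is correct and uses the same ingredients as the paper---the Leibniz rule of Remark~\ref{rem.better}, Lemma~\ref{lemma.just.smooth}, and Proposition~\ref{prop.diff.algebra}---with the mild reorganization that you merge the reverse inclusion of (ii) with (iii) into a single induction landing directly in the $Z$-span, whereas the paper first proves $\Diff^\mu=\widetilde\Diff^\mu$ and then separately reduces arbitrary vector fields to the $Z_j$'s. One small point: when you invoke (i) to place $\nabla_{Z_k}\circ T_k$ in $\widetilde\Diff^{\,j}$, that literally gives only (ii); for (iii) you should remark that the Leibniz step used in (i), applied when every vector field involved lies in $\{Z_1,\ldots,Z_N\}$, produces generators that remain in the $Z$-span---this is immediate but worth making explicit.
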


\begin{proof}
We prove the first statement by induction on $\mu$. For $\mu = 0$, the
property (i) follows from the multiplication properties
\begin{equation*}
  \CI(M; \Hom(F; G)) \CI(M; \Hom(E; F)) \subset \CI(M; \Hom(E; G))
  \,.
\end{equation*}
The induction step is obtained using also the equation $\nabla_{X} a =
\nabla_X(a) + a \nabla_X$.

To prove (ii), let us notice that Lemma \ref{lemma.just.smooth}(i)
states that $\nabla_X \in \Diff^1(M; E, F)$. The composition property
of Proposition \ref{prop.diff.algebra}\eqref{item.comp.prop}, yields
the inclusion  $\widetilde \Diff^\mu(M; E, F) \subset \Diff^\mu(M; E, F)$.
Let us prove now the opposite inclusion.

Let $\tau_\xi(\zeta) := \xi \otimes \zeta$ and $\nabla \seq
\sum_{i=1}^N \tau_{\xi_i}\nabla_{Z_i},$ as in Lemma
\ref{lemma.just.smooth}. This gives $\nabla \in
\widetilde{\Diff}^1(M; E, T^*M \otimes E)$. Part (i), already
proved, then proves, by induction on $j$, that $a \nabla^j \in
\widetilde \Diff^\mu(M; E, F)$ if $a \in \CI(M; \Hom(E; F))$ and $j
\le \mu$. Therefore $\Diff^\mu(M; E, F) \subset \widetilde
\Diff^\mu(M; E, F)$. Hence we have equality.

Let $\maD_\mu$ be the linear span of $a \nabla_{X_1}\nabla_{X_2}
\ldots \nabla_{X_r}$, where $r \le \mu$ and $X_1, X_2, \ldots, X_r \in
\{Z_1, Z_2, \ldots, Z_N\}$ and $a \in \CI(M; \Hom(E, F))$. It is
enough to prove that $\maD_\mu = \Diff^\mu(M; E, F)$.

Let $Q := a \nabla_{X_1}\nabla_{X_2} \ldots \nabla_{X_r}$, where $r
\le \mu$ and $X_1, X_2, \ldots, X_r \in \CI(M)$ and $a \in \CI(M;
\Hom(E, F))$. We shall prove, by induction on $r$, that $Q \in
\maD_\mu$, that is, that $Q$ is a linear combination of terms of the
same kind, but with all $X_j \in \{Z_1, Z_2, \ldots, Z_N\}$. By
induction, we can assume that this is true for products of up to $r -
1$ covariant derivatives. The induction step is obtained using the
equation $\nabla_{X} a = \nabla_X(a) + a \nabla_X$. Indeed, let us
consider then $a$ and $b$ be $\CIb$--endomorphisms, $X_2, X_3, \ldots,
X_r \in \{Z_1, Z_2, \ldots, Z_N\}$ and $X_1$ an arbitrary smooth
vector field. Then
\begin{equation*}
  Q_1 \ede a \nabla_{X_1} b \nabla_{X_2} \ldots \nabla_{X_r} \seq a
  \big [X_1(b) + b \sum_{j=1}^N \xi_j(X_1) \nabla_{Z_j} \big ]
  \nabla_{X_2} \ldots \nabla_{X_r}
\end{equation*}
We thus have $Q_1 \in \maD_\mu$. This proves the equality of
$\maD_\mu$ and $\widetilde{\Diff}^\mu(M; E, F)$.
\end{proof}

We obtain the following consequence.

\begin{corollary}\label{cor.order1}
Let $(Z_j)$, $1 \le j \le N$, $Z_j \in \CI(M; TM)$ be as in Remark
\ref{rem.generation1}. Then $\Diff^\mu(M; E, F)$ is linearly generated
by $a \nabla^E_{Z_{k_1}} \ldots \nabla^E_{Z_{k_r}}$, where $1 \le k_1
\le k_2 \le \ldots \le k_r \le N$, $r \le \mu$, and $a \in \CI(M;
\Hom(E; F))$.
\end{corollary}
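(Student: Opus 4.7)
The proof naturally proceeds by induction on $\mu$, building directly on Proposition \ref{prop.inclusion.mixed1}(iii). The base case $\mu=0$ is immediate since $\Diff^0(M;E,F) = \CI(M;\Hom(E;F))$, corresponding to $r=0$.

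For the inductive step, let $P\in\Diff^\mu(M;E,F)$. By Proposition \ref{prop.inclusion.mixed1}(iii), $P$ is a finite linear combination of monomials $a\nabla^E_{Z_{k_1}}\cdots\nabla^E_{Z_{k_r}}$ with $r\le\mu$, $a\in\CI(M;\Hom(E;F))$, and $k_j\in\{1,\ldots,N\}$ arbitrary (in particular, possibly unordered). Monomials with $r<\mu$ already lie in $\Diff^{\mu-1}(M;E,F)$ and can be written in the desired ordered form by the outer induction hypothesis, so it suffices to reorder each monomial of length exactly $\mu$, modulo $\Diff^{\mu-1}(M;E,F)$.

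The key algebraic tool is the commutator--curvature identity
\[
\nabla^E_{Z_i}\nabla^E_{Z_j} - \nabla^E_{Z_j}\nabla^E_{Z_i} \seq \nabla^E_{[Z_i,Z_j]} + R^E(Z_i,Z_j),
\]
where $R^E$ is the (smooth) curvature of $\nabla^E$. By Lemma \ref{lemma.just.smooth}(v) we have $\nabla^E_{[Z_i,Z_j]} = \sum_k L_{ij}^k \nabla^E_{Z_k}$ with $L_{ij}^k\in\CI(M)$, and $R^E(Z_i,Z_j)\in\CI(M;\End(E))$ because $Z_i,Z_j\in\CI(M;TM)$. Consequently, each swap of adjacent indices $k_s>k_{s+1}$ in a length-$\mu$ monomial produces one term of the same length with strictly one fewer inversion, plus correction terms containing only $\mu-1$ covariant derivatives (with an extra smooth scalar $L_{k_sk_{s+1}}^k$ or endomorphism $R^E(Z_{k_s},Z_{k_{s+1}})$ inserted mid-monomial). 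A secondary induction on the number of inversions $\#\{(s,t):s<t,\ k_s>k_t\}$, which is bounded by $\binom{\mu}{2}$, then drives the reordering to completion.

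The only remaining bookkeeping is to move the mid-monomial scalar/endomorphism coefficients produced by the swap to the leftmost position, so they may be absorbed into the leading coefficient $a$. This is accomplished by repeated use of the elementary identity $\nabla^E_Z\circ f = f\nabla^E_Z + Z(f)$ (and the analogous identity for endomorphism-valued $f$); each application either pushes a coefficient one step to the left or strictly decreases the number of covariant derivatives, staying inside $\Diff^{\mu-1}(M;E,F)$ throughout. The main obstacle is precisely this organizational accounting: verifying that every correction term really does land in $\Diff^{\mu-1}(M;E,F)$ so that the outer induction hypothesis can be applied. No new ingredients beyond the commutator--curvature relation, Lemma \ref{lemma.just.smooth}, and Proposition \ref{prop.inclusion.mixed1}(iii) are needed.
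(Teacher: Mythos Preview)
Your proof is correct and follows essentially the same approach as the paper: start from Proposition~\ref{prop.inclusion.mixed1}(iii), then use the curvature identity $[\nabla^E_{Z_i},\nabla^E_{Z_j}] = \nabla^E_{[Z_i,Z_j]} + R^E(Z_i,Z_j)$ to commute the covariant derivatives modulo lower-order terms, and finish by induction. Your version is more explicit about the double induction (on $\mu$ and on the number of inversions) and the coefficient-moving bookkeeping, which the paper's proof leaves implicit.
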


\begin{proof}
We know that $\Diff^\mu(M; E, F)$ is linearly generated by terms of
the form $a \nabla^E_{Z_{k_1}} \ldots \nabla^E_{Z_{k_r}}$, where $k_1,
k_2, \ldots, k_r \in \{1, 2, \ldots, N\}$, $r \le \mu$, $a \in \CI(M;
\Hom(E; F))$. It just remains to show that we can choose the indices
$k$ to form a non-decreasing sequence. To this end, we shall use the
relation
\begin{equation}\label{eq.def.R}
    R^E(X, Y) \ede [\nabla_X^E, \nabla_Y^E]-\nabla_{[X, Y]}^E
   \in \CI(M; \End(E))\,.
\end{equation}
This shows, that, up to lower order terms, we can commute
the operators $\nabla_{Z_j}^E$. The proof is completed by induction.
\end{proof}

\subsection{Totally bounded vector fields and the (FFC) condition}

Recall that we write $\Diff_b^\infty(M) := \Diff_b^\infty(M; \CC)$ for
the algebras of differential operators on $M$ with
$\CIb(M)$-coefficients introduced in Proposition
\ref{prop.diff.algebra}\eqref{item.pda.iv} when $E$ is the trivial
vector bundle with fiber $\CC$. Then, by sepparating the order zero
part of a differential operator, we obtain
\begin{equation*}
    \Diff^1_b(M) \seq \CIb(M) \oplus \maW_b(M)\,,
\end{equation*}
where
\begin{equation*}
  \maW_b(M) \ede \CIb(M; TM) \ede W^{\infty, \infty}(M; TM)\,,
\end{equation*}
that is, the space of bounded vector fields on $M$ all of whose
covariant derivatives are bounded. Then $\maW_b(M)$ is a module over
$\CIb(M)$ with respect to multiplication by Proposition
\ref{prop-Sobolev}. This space (which will turn out to be a Lie
algebra) will play an important role in what follows and this section
is devoted, to a large extent, to the study of their role in the
definition of $\nabla$-Sobolev spaces and $\nabla$-differential
operators.

\begin{lemma}\label{lemma.WbM}
Let $\maW_b(M) \ede \CIb(M; TM)$
\begin{enumerate}[(i)]
     \item If $X \in \maW_b(M)$, then $\nabla_X \in \Diff_b^{1}(M; E,
       T^*M \otimes E)$.

   \item $\maW_b(M)$ is a Lie algebra, that is, $[X, Y] := XY - YX \in
     \maW_b(M)$ for all $X, Y \in \maW_b(M)$.

   \item If $X, Y \in \maW_b(M)$, then $\nabla_X Y \in \maW_b(M)$.
\end{enumerate}
\end{lemma}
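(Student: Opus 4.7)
The plan is to handle (iii) first, then deduce (ii) immediately from the torsion-freeness of the Levi-Civita connection, and finally dispatch (i) as a bounded-coefficient refinement of Lemma \ref{lemma.just.smooth}(i). The common thread is that $\maW_b(M)=W_\nabla^{\infty,\infty}(M;TM)$ and every assertion reduces to the multiplicative closedness of $W^{\infty,\infty}$-sections under smooth, parallel bundle maps (Proposition \ref{prop-Sobolev-new} / Corollary \ref{prop-Sobolev}), together with the fact that iterated connections preserve $W_\nabla^{\infty,\infty}$.

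For (iii), the main observation is that $\nabla_X Y$ is the pointwise contraction of $X$ against $\nabla Y$ via the canonical pairing $TM\otimes(T^*M\otimes TM)\to TM$. If $Y\in\maW_b(M)$, then by definition $\nabla^{j}Y\in L^\infty$ for all $j\ge 0$, hence $\nabla Y\in W_\nabla^{\infty,\infty}(M;T^*M\otimes TM)$; similarly $X\in W_\nabla^{\infty,\infty}(M;TM)$. The contraction is induced by a parallel section of the corresponding $\Hom$-bundle (it is the natural trace pairing, so its covariant derivative vanishes, as in the proof surrounding \eqref{eq.cont.Sobolev}). Invoking Corollary \ref{prop-Sobolev} with $\ell=\infty$ and $q=\infty$, I would conclude that $\nabla_X Y\in W_\nabla^{\infty,\infty}(M;TM)=\maW_b(M)$.

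Statement (ii) then follows at once: since $\nabla=\nabla^{LC}$ is torsion-free, $[X,Y]=\nabla_X Y-\nabla_Y X$, and both summands lie in $\maW_b(M)$ by (iii); vector-space closure finishes the job. For (i), the strategy is identical to Lemma \ref{lemma.just.smooth}(i), only with $\CIb$ in place of $\CI$: write $\nabla_X=i_X\circ\nabla$, where $i_X:T^*M\otimes E\to E$ is contraction with $X$. Because the metric is parallel and $X\in\maW_b(M)$, the section $i_X$ belongs to $W_\nabla^{\infty,\infty}(M;\Hom(T^*M\otimes E,E))$, so $\nabla_X$ is the composition of $\nabla\in\Diff^1$ with a $\CIb$-bundle morphism, i.e.\ a first-order operator with $\CIb$-coefficients in the sense of Notation \ref{not.Diff}.

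The only step requiring genuine care is the one in (iii) where I identify $i_X(\nabla Y)$ with a contracted product that falls under the scope of the multiplicative property: I must check that the bundle map in question is covariantly constant (so that it contributes no extra derivatives of the coefficients) and that the $\Hom$-bundle identifications used by Proposition \ref{prop-Sobolev-new} are compatible with the tensor contraction. This is routine given Remark \ref{rem.better}, but it is the one place where a sloppy invocation of ``multiplication of $W^{\infty,\infty}$-sections'' would leave a gap, so I would spell this out before chaining the three parts together.
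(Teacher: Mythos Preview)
Your argument is correct. The paper proceeds in the opposite order: it establishes (i) first (exactly as you do, via $\nabla_X=i_X\circ\nabla$ with $i_X\in\CIb$), then derives (ii) from (i) by noting that $\Diff_b^\infty(M;\CC)$ is an algebra (Proposition \ref{prop.diff.algebra}\eqref{item.pda.iv}), so that $[X,Y]=XY-YX$ lies in $\Diff_b^\infty(M)$ and hence, being a vector field, in $\maW_b(M)$; finally (iii) is obtained by applying the mapping property of Lemma \ref{lemma.easy.cont} to the operator $\nabla_X\in\Diff_b^1(M;TM)$ furnished by (i). Your route---proving (iii) directly via the parallel contraction map and the $W^{\infty,\infty}$-multiplication of Corollary \ref{prop-Sobolev}, and then deducing (ii) from the torsion-free identity $[X,Y]=\nabla_X Y-\nabla_Y X$---is equally short and arguably cleaner for (ii): it sidesteps the small step the paper leaves implicit, namely that a vector field lying in $\Diff_b^2(M)$ automatically lies in $\maW_b(M)$ (which requires a principal-symbol/jet argument together with the splitting $\Diff_b^1(M)=\CIb(M)\oplus\maW_b(M)$). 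On the other hand, the paper's algebraic argument for (ii) does not use torsion-freeness of $\nabla^{LC}$, so it would survive unchanged for a general metric connection.
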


\begin{proof}
To prove (i), let $i_X : T^*M \otimes E \to E$ be the contraction
with the vector $X \in \maW_b(M)$. Then $i_X \in W^{\infty,
\infty}(M; \Hom(T^*M \otimes E; E))$ since $X \in \maW_b(M)$, and
hence $\nabla_X = i_X \circ \nabla$ is a $\nabla$-differential
operator with coefficients in $\CIb$, by definition. The property
(ii) follows from (i) since $\Diff_b^\infty(M) = \Diff_b^\infty(M;
\CC)$ is an algebra (see Proposition \ref{prop.diff.algebra}\eqref{item.pda.iv}).
Property (iii) follows from (i) and the ``easy'' mapping property of Lemma
\ref{lemma.easy.cont} (which gives, in particular, that $P \in
\Diff_b^{\mu}(M; E, F)$ maps $W_\nabla^{\infty, \infty}(M; E)$ to
$W_\nabla^{\infty, \infty}(M; F)$ continuously.
\end{proof}

\begin{definition}\label{def.FFC}
We say that $(M, g)$ satisfies the {\em Fr\'echet finiteness condition
  (FFC)} if, there exists $N \in \NN$ and an isometric (vector bundle)
embedding $\Phi : TM \subset M \times \RR^N$, $\Phi \in W^{\infty,
  \infty}(M; \Hom(TM; \RR^N))$, where on $\RR^N$ we consider the flat
connection.
\end{definition}

This condition may seem strong, but it was proved in Lemma 3.1 of
\cite{GN17} that it is satisfied by a manifold with bounded
geometry. In fact, in that paper, instead of constructing a $\Phi$
with the property that it is an isometry, it was proved that
$\Phi^{-1} \in \CIb(M; \Hom(\RR^N; TM))$. By replacing that $\Phi$
with its polar part, one can assume it to be isometric.  In
particular, the (FFC) property is {\em hereditary}, in the sense that
if it is satisfied by a manifold $M$, then it is satisfied by any open
subset $M_0 \subset M$. Indeed, it is enough to restrict $\Phi$ to
$M_0$.

We have the following analog of Remark \ref{rem.generation1}.

\begin{remark}\label{rem.generation2}
We use the notation of Remark \ref{rem.generation1}.  Since we have
assumed that $\Phi$ is an isometry now, we have $\Psi = \Phi^\top : M
\times \RR^N \to TM$ and, for all $1 \le j \le N$, we have
\begin{equation*}
  \begin{gathered}
    Z_j \ede \Phi^\top(e_j) \in \maW_b(M) \ede \CIb(M; TM) \ \mbox{
      and }\\
    \xi_j \ede p_j \circ \Phi \in \CIb(M; T^*M)
  \end{gathered}
\end{equation*}
so that $\maW_b(M) = \sum_{j=1}^N \CIb(M) Z_j$.
The relations
  \begin{equation*}
    X \seq \sum_{j=1}^N \xi_j(X) Z_j\ \mbox{ and } \
    \omega \seq \sum_{j=1}^N \omega(Z_j) \xi_j\
  \end{equation*}
remain, of course, valid.  The set $\{Z_1, Z_2, \ldots, Z_N\}$ will be
called a {\em Fr\'echet system of generators} for $\maW_b(M)$.
\end{remark}

Let $\div := - d^\prime$ be the negative of the dual map
$d^\prime : \CIc(M; TM) \to \CIc(M)$ of $d : \CIc(M) \to \CIc(M;
T^*M)$. We shall need the following result.

\begin{lemma}\label{lemma.FFC+div}
Assume $M$ satisfies the (FFC)
condition and let $\xi_j \in \CIb(M; T^*M)$ and $Z_j \in \maW_b(M)
:= \CIb(M; TM)$, $j = 1, \ldots, N$, be as in Remark
\ref{rem.generation2}.
\begin{enumerate}[(i)]

   \item If $X \in \maW_b(M)$, then $\nabla_X \in \Diff_b^{1}(M; E,
     T^*M \otimes E)$.

    \item \label{item.div} For any $X \in \maW_b(M)$,
      we have $\div(X) \in \CIb(M)$.

    \item {We have $\nabla_{Z_i}Z_j= \sum_{k} G_{ij}^k Z_k$,
      where $G_{ij}^k \in W^{\infty,\infty}(M)$.}

    \item {We have $[Z_i, Z_j] = \sum_{k} L_{ij}^k Z_k$, where
      $L_{ij}^k \in W^{\infty,\infty}(M)$.}
\end{enumerate}
\end{lemma}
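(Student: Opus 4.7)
The plan is to obtain this lemma as the totally bounded counterpart of Lemma \ref{lemma.just.smooth}, replacing $\CI$ by $\CIb$ at every step. The main new ingredient beyond the smooth case is the (FFC) hypothesis, which via Remark \ref{rem.generation2} guarantees that the vector fields $Z_j$ and the dual 1-forms $\xi_j$ actually lie in $\CIb$, not merely in $\CI$; once this is in place, the multiplication property of $\CIb$ (Proposition \ref{prop-Sobolev}, applied with $p = q = r = \infty$) together with Lemma \ref{lemma.WbM} will do essentially all the work.

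Part (i) is already contained in Lemma \ref{lemma.WbM}(i) and requires no further argument; I would simply cite it. For part (ii), I would start from the explicit formula derived in Lemma \ref{lemma.just.smooth}(iii):
\begin{equation*}
    \div(X) \seq \sum_{k,l=1}^{N}(\xi_k, \xi_l)(\nabla_{Z_k}X, Z_l)\,.
\end{equation*}
Now by Remark \ref{rem.generation2} we have $\xi_k \in \CIb(M; T^*M)$ and $Z_l \in \maW_b(M)$, while Lemma \ref{lemma.WbM}(iii) gives $\nabla_{Z_k}X \in \maW_b(M)$. Thus each summand is a product of sections each of which lies in $\CIb$ of an appropriate bundle, and Proposition \ref{prop-Sobolev} (in the form applied to hermitian pairings, as in the second corollary/proposition following it) shows that the pairings $(\xi_k,\xi_l)$ and $(\nabla_{Z_k}X, Z_l)$ belong to $\CIb(M)$. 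Since the sum is finite, $\div(X) \in \CIb(M)$.

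For part (iii), the point is that $\nabla_{Z_i} Z_j \in \maW_b(M)$ by Lemma \ref{lemma.WbM}(iii). Using the expansion from Remark \ref{rem.generation2} applied to this vector field, one writes
\begin{equation*}
    \nabla_{Z_i} Z_j \seq \sum_{k=1}^{N} \xi_k(\nabla_{Z_i}Z_j)\, Z_k\,,
\end{equation*}
so that we may take $G_{ij}^k := \xi_k(\nabla_{Z_i} Z_j)$. Since $\xi_k \in \CIb(M; T^*M)$ and $\nabla_{Z_i}Z_j \in \maW_b(M) = \CIb(M; TM)$, another application of Proposition \ref{prop-Sobolev} yields $G_{ij}^k \in \CIb(M) = W^{\infty,\infty}(M)$. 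Part (iv) is handled by the exact same argument, starting instead from the fact that $\maW_b(M)$ is a Lie algebra (Lemma \ref{lemma.WbM}(ii)), so $[Z_i, Z_j] \in \maW_b(M)$, and then setting $L_{ij}^k := \xi_k([Z_i, Z_j]) \in \CIb(M)$.

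There is no real obstacle here beyond bookkeeping: the only reason the smooth statements of Lemma \ref{lemma.just.smooth} do not immediately upgrade is that there the $\xi_k$ and $Z_j$ were merely smooth, whereas under (FFC) Remark \ref{rem.generation2} promotes them to $\CIb$ elements. Once this is observed, the multiplicative property of $\CIb$ closes all four items. The most delicate point to state cleanly is that the covariant derivative of a $\CIb$ vector field in the direction of another $\CIb$ vector field stays in $\maW_b$, but this is exactly Lemma \ref{lemma.WbM}(iii), so I would invoke it rather than reprove it.
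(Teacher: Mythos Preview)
Your proposal is correct and follows essentially the same route as the paper: both arguments upgrade Lemma \ref{lemma.just.smooth} to the $\CIb$-setting by invoking Remark \ref{rem.generation2} for the boundedness of the $Z_j$ and $\xi_j$, Lemma \ref{lemma.WbM} for the closure of $\maW_b(M)$ under brackets and covariant derivatives, and Proposition \ref{prop-Sobolev} for the multiplicativity of $\CIb$. Your treatment of (i) by directly citing Lemma \ref{lemma.WbM}(i) is in fact cleaner than the paper's own proof of that item, which somewhat redundantly rederives the decomposition $\nabla^E = \sum_j \xi_j \otimes \nabla^E_{Z_j}$ rather than simply quoting the earlier lemma.
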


\begin{proof}
To prove (i), let $u \in \CIc(M; E)$ and $X \in \CIb(M; TM)$ be
arbitrary. Let $i_X : T^*M \otimes E \to E$ be the contraction with
the vector $X \in \maW_b(M)$, $i_X(u) = \langle X, u \rangle$. Then
the second displayed formula in the last remark gives
\begin{equation*}
\langle X, \nabla^E(u) \rangle \seq \nabla^E_X (u) \seq
\sum_{j=1}^N \xi_j(X) \nabla^E_{Z_j}(u) \seq \langle X,
\sum_{j=1}^N \xi_j \otimes \nabla^E_{Z_j}(u) \rangle\,.
\end{equation*}
Since $u \in \CIc(M ; E)$ and $X \in \CIb(M; TM)$ were arbitrary, the
result follows.

Let $X_1, X_2, \ldots, X_n$ be a local {\em orthonormal} basis of $TM$.
Let $Z_1, Z_2, \ldots, Z_N \in \maW_b(M) := W^{\infty, \infty}(M; TM)$ be
the Fr\'echet system of generating vector fields introduced in Remark
\ref{rem.generation2}.  Then we obtain
\begin{align*}
  \div(X)
  & \seq \sum_{i=1}^n (\nabla_{X_i}X , X_i)\\
  & \seq \sum_{j=1, k, l }^n \xi_k (X_j) \xi_l(X_j) (\nabla_{Z_k}X ,
  Z_l)\\
  & \seq \sum_{j=1, k, l }^n (\xi_k, \xi_l) (\nabla_{Z_k}X ,
  Z_l)
\end{align*}
by the isometry property of the map $T^*M \ni \xi \to \big ( (\xi, X_j) \big)_{j=1}^n$
(since $(X_i)$ was chosen to be an orthonormal basis).

Lemma \ref{lemma.WbM} gives that $\nabla_{Z_i}Z_j \in \maW_b(M)$.
Hence $\nabla_{Z_i}Z_j = \sum_{k=1}^N \xi_k(\nabla_{Z_i}Z_j) Z_k$ and
we can take $G_{ij}^k := \xi_k(\nabla_{Z_i}Z_j)$, which is in
$\CIb(M)$ by Proposition \ref{prop-Sobolev}. For (iv), we proceed in
exactly the same way by using Lemma \ref{lemma.WbM}(iii).
\end{proof}

We can now formulate and prove the following proposition, which
provides us with the usual properties of the Hilbert space adjoints
$\nabla_X^*$ and $\nabla^*$.

\begin{proposition}
\label{mapping-properties}
Let $\mu \in \NN$ and $1<p<+\infty$ and let us assume that $M$
satisfies the Fr\'echet finiteness condition (Definition
\ref{def.FFC}). Then we also have the following properties.
\begin{enumerate}[(i)]
  \item If $X \in \maW_b(M)$, then $\nabla_X^* = - \nabla_X -
    \div(X) \in \Diff_b^{1}(M; T^*M \otimes E, E)$.
  \item $\nabla^* \in \Diff_b^{1}(M; T^*M \otimes E, E)$.
\end{enumerate}
\end{proposition}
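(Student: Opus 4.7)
The plan is to prove (i) directly by integration by parts against the hermitian metric on $E$, and then to deduce (ii) from (i) by invoking the FFC-based decomposition of $\nabla$ along the Fr\'echet generators $Z_j$ of $\maW_b(M)$.

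For part (i), I would fix $X \in \maW_b(M)$ and pair $\nabla_X u$ against a test section $\eta \in \CIc(M; E)$. By the metric-preservation identity \eqref{covariant-derivative},
\[
   X(u,\eta)_E \seq (\nabla_X u,\eta)_E + (u,\nabla_X \eta)_E \,,
\]
and by the scalar divergence identity $\int_M X(f)\dvol = -\int_M f\,\div(X)\dvol$ for $f \in \CIc(M)$ (which is Stokes' theorem applied to the compactly supported $(n-1)$-form $i_X(f\,\dvol)$), integration yields
\[
   \int_M (\nabla_X u,\eta)_E \dvol \seq \int_M \big(u,\, -\nabla_X \eta - \div(X)\eta\big)_E \dvol \,.
\]
Since $u$ and $\eta$ range over $\CIc(M;E)$, this identifies the formal adjoint as $\nabla_X^* = -\nabla_X - \div(X)$. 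To conclude this operator lies in $\Diff_b^1$, Lemma \ref{lemma.WbM} gives that $\nabla_X$ has $\CIb$-coefficients, while Lemma \ref{lemma.FFC+div} (which genuinely relies on FFC) supplies $\div(X) \in \CIb(M)$; the claim then follows from Proposition \ref{prop.diff.algebra}.

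For part (ii), I would decompose $\nabla$ using the Fr\'echet system $(Z_j,\xi_j)_{j=1}^N$ of Remark \ref{rem.generation2}. By Lemma \ref{lemma.just.smooth}(\ref{item.FFC+div.i}), whose formula remains valid in the FFC setting with the additional information that $\xi_j \in \CIb(M; T^*M)$ and $Z_j \in \maW_b(M)$, one has
\[
   \nabla \seq \sum_{j=1}^N (\xi_j \otimes \cdot) \circ \nabla_{Z_j} \,:\, \CIc(M; E) \to \CIc(M; T^*M \otimes E) \,.
\]
Taking formal adjoints term-by-term produces
\[
   \nabla^* \seq \sum_{j=1}^N \nabla_{Z_j}^* \circ (\xi_j \otimes \cdot)^* \,,
\]
where $(\xi_j \otimes \cdot)^* : T^*M \otimes E \to E$ is the pointwise bundle morphism contracting the $T^*M$-factor against $\xi_j$ through the metric on $T^*M$. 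Since $\xi_j \in \CIb(M; T^*M)$, this bundle morphism has $\CIb$-coefficients and so is an order-zero operator in $\Diff_b^0(M; T^*M \otimes E, E)$. Composing with $\nabla_{Z_j}^* \in \Diff_b^1$ from part (i) applied with $X = Z_j$, and invoking Proposition \ref{prop.diff.algebra}(\ref{item.comp.prop}), each summand, and hence their finite sum, lies in $\Diff_b^1(M; T^*M \otimes E, E)$.

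The integration-by-parts step at the start of (i) is routine because test sections are compactly supported and $M$ has no boundary, so it reduces to Stokes' theorem and the definition of $\div$. The only delicate point is that both the formula $\nabla_X^* = -\nabla_X - \div(X)$ and the decomposition of $\nabla$ involve coefficients ($\div(X)$, the connection coefficients of $\nabla_X$, and the generators $\xi_j, Z_j$) that must actually belong to $\CIb$; this is precisely what the FFC guarantees through Lemma \ref{lemma.FFC+div} together with Remark \ref{rem.generation2}, and it is the main reason the statement is placed in this section rather than the previous one.
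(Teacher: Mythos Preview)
Your proof is correct and follows essentially the same route as the paper: integration by parts using the metric-preservation identity to obtain $\nabla_X^* = -\nabla_X - \div(X)$, followed by the FFC decomposition $\nabla = \sum_j \tau_{\xi_j}\nabla_{Z_j}$ and termwise adjoints for (ii). The only cosmetic difference is that the paper writes the scalar step as $X(f) = \div(fX) - f\,\div(X)$ before integrating, whereas you invoke Stokes' theorem on $i_X(f\,\dvol)$ directly; these are equivalent.
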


\begin{proof}
To prove (i), let $\div = -d^\prime : \CIc(M; T^*M) \to \CIc(M)$, the
negative of the transpose of the de Rham differential. Then $\div(fX)
= f\div(X) + X(f)$. Let us write $\nabla_X$ for $\nabla_X^E$.  Then,
using formula \eqref{covariant-derivative}, we obtain for all $\xi,
\eta\in \CIc(M; E)$
\begin{align*}
   (\nabla_X \xi, \eta)_E & \seq X(\xi, \eta)_E - (\xi, \nabla_X
  \eta)_E \\
  & \seq \div\left((\xi, \eta)_E X\right) - (\xi, \eta)_E\div(X) -
  (\xi, \nabla_X \eta)_E\,,
\end{align*}
and then, integrating over M and using the assumption that $\xi$ and
$\eta $ have compact support (so the integral of the ``div'' part is
zero), we get
\begin{align*}
    \int _M(\nabla_X \xi, \eta)_E \dvol & \seq - \int_M (\xi, \div
    (X)\eta +\nabla_X \eta)_E\dvol\,.
\end{align*}
Thus, $\nabla_X^* \seq -\nabla _X - \div(X),$ as stated, and hence
$\nabla_X^*$ belongs to $\Diff_b^{1}(M; T^*M \otimes E, E)$ by Lemma
\ref{lemma.WbM}(i) and by Lemma \ref{lemma.FFC+div}\eqref{item.div}.

Let $\tau_\xi(\zeta) := \xi \otimes \zeta$. Then, using the notation
of Remarks \ref{rem.generation1} and \ref{rem.generation2}, we can
reformulate the result of Lemma
\ref{lemma.FFC+div}\eqref{item.FFC+div.i} as
\begin{equation}\label{eq.reform}
    \nabla \seq \sum_{i=1}^N \tau_{\xi_i}\nabla_{Z_i}\,.
\end{equation}
The relation (ii) thus follows from this relation by taking adjoints
and using (i) and the composition property of Proposition
\ref{prop.diff.algebra}\eqref{item.comp.prop}.
\end{proof}

We can now prove the following extension of the standard continuity of
differential operators.

\begin{corollary}\label{cor.neg.order}
Let $1 < p < \infty$. If $P = \sum_{j=0}^{\mu} a_j \nabla^j $ and
$a_{j} \in \CIb(M; \Hom(E; F))$, that is, if $P$ is a
$\nabla$--differential operator with coefficients in $\CIb$, then $P$
extends by continuity to maps
\begin{equation*}
\begin{gathered}
    W_{0,\nabla }^{s,p}(M; E) \to W_{0,\nabla }^{s-\mu,p}(M; F)\,,
    \quad s\in \NN,\ s \ge \mu, \\
%
   W^{s,p}_\nabla (M; E) \to W_\nabla^{s-\mu,p}(M; F)\,, \quad s \in
   \RR \,.
\end{gathered}
\end{equation*}
\end{corollary}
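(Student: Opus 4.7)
The plan is to handle the Sobolev scale in three stages — non-negative integer order, non-positive integer order by duality, and finally real order by interpolation.

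First, for $s \in \NN$ with $s \ge \mu$, Lemma \ref{lemma.easy.cont}(\ref{item.easy.cont}) applied with $k = s - \mu$ and any $\ell \ge s - \mu$ (which is permitted since $a_j \in \CIb$) gives immediately the continuity
\[
P : W^{s,p}_\nabla(M; E) \to W^{s-\mu,p}_\nabla(M; F).
\]
Because each summand $a_j \nabla^j$ preserves compact support, $P$ sends $\CIc(M; E)$ into $\CIc(M; F)$. Combined with the density of $\CIc$ in $W^{s,p}_{0,\nabla}$ and in $W^{s-\mu,p}_{0,\nabla}$ and with the continuity above, this restricts to a continuous map $W^{s,p}_{0,\nabla}(M; E) \to W^{s-\mu,p}_{0,\nabla}(M; F)$.

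Second, for integer $s \le 0$, I would argue by duality. The formal hermitian adjoint of $P = \sum_{j=0}^{\mu} a_j \nabla^j$ is
\[
P^* \seq \sum_{j=0}^{\mu} (\nabla^*)^j a_j^*.
\]
Proposition \ref{mapping-properties}(ii) gives $\nabla^* \in \Diff_b^1$; moreover $a_j^* \in \CIb$ since hermitian conjugation preserves $\CIb$. The algebra property in Proposition \ref{prop.diff.algebra}(\ref{item.pda.iv}) therefore yields $P^* \in \Diff_b^\mu(M; F, E)$. Applying the first step to $P^*$ with exponent $p'$ and integer order $t := \mu - s \ge \mu$ gives
\[
P^* : W^{\mu - s, p'}_{0,\nabla}(M; F) \to W^{-s, p'}_{0,\nabla}(M; E)
\]
continuously. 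Taking the conjugate dual and using the definition \eqref{eq.def.neg.Sob} (together with the identification $E^* \simeq E$) transposes this into a continuous extension
\[
P : W^{s, p}_\nabla(M; E) \to W^{s-\mu, p}_\nabla(M; F)
\]
for every integer $s \le 0$. A brief consistency check is needed: on $\CIc(M; E)$, the dual extension coincides with the original $P$, which amounts to the standard integration-by-parts identity $(P^*)^* = P$ on smooth compactly supported sections, itself coming from iterating Proposition \ref{mapping-properties}(i).

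Third, for general real $s$, I would invoke complex interpolation — the defining mechanism of the non-integer order $\nabla$-Sobolev spaces. The integer endpoints from the previous two parts cover $s \le 0$ and $s \ge \mu$; interpolating between $s = 0$ and $s = \mu$ fills in $0 \le s \le \mu$, and interpolating between any two consecutive integers handles the remaining real values on the whole scale.

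The main obstacle is the duality step: one has to keep the pairings and the identification $E^* \simeq E$ consistent so that the Banach-space adjoint of the densely defined operator $P$ is identified with the differential operator $P^*$ produced formally. Once that is in place, Proposition \ref{mapping-properties}(ii) and Proposition \ref{prop.diff.algebra}(\ref{item.pda.iv}) do all the heavy lifting, and the remaining interpolation step is routine.
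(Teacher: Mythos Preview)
Your strategy matches the paper's: positive integer orders via Lemma~\ref{lemma.easy.cont}, negative integer orders by duality using Proposition~\ref{mapping-properties}, and the rest by complex interpolation. The use of $P^* = \sum_j (\nabla^*)^j a_j^*$ together with Proposition~\ref{prop.diff.algebra}\eqref{item.pda.iv} is fine and parallels what the paper does for $\nabla$ alone.

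There is, however, a gap in your third step. Steps one and two give you boundedness only for integer $s \ge \mu$ and integer $s \le 0$, leaving the integers $1, 2, \ldots, \mu-1$ uncovered. You propose to ``interpolate between $s=0$ and $s=\mu$'' to fill this in, but that is not justified by the paper's setup: the non-integer $\nabla$-Sobolev spaces are \emph{defined} by complex interpolation between \emph{consecutive} integers, and nothing in the paper establishes a reiteration property of the form $[L^p, W^{\mu,p}_\nabla]_{k/\mu} = W^{k,p}_\nabla$ for integer $0<k<\mu$. Without that identity you cannot conclude that the interpolated operator lands in the correct space, and you then also lack the integer endpoints needed for your subsequent ``consecutive integers'' interpolation on $(0,\mu)$.

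The paper sidesteps this cleanly by first reducing to $\mu=1$: since any $P \in \Diff_b^\mu$ is a finite sum of compositions of operators of the form $a \in \CIb$ (order~$0$) and $\nabla$ (order~$1$), Proposition~\ref{prop.diff.algebra} lets one treat only $P=\nabla$. With $\mu=1$ the two integer ranges $s\ge 1$ and $s\le 0$ already exhaust $\ZZ$, so interpolation between consecutive integers is all that is needed. Inserting this reduction at the start of your argument closes the gap with no further changes.
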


\begin{proof}
We have already seen that $P:W^{s,p}_\nabla (M; E) \to
W_\nabla^{s-\mu,p}(M; F)$ is continuous if $s \ge \mu$, $s \in \NN$,
see Lemma \ref{lemma.easy.cont}\eqref{item.easy.cont}. Moreover, a
differential operator will send compactly supported sections to
compactly supported sections. The first statement thus follows.

Let us turn now to the general case. Since for non-integer $s$, the
spaces $W^{s, p}_{\nabla}$ are defined by interpolation between
consecutive integers, it suffices to prove our statement for integer
values of $s$. (The general case is obtained by interpolation.)
Furthermore, using Proposition \ref{prop.diff.algebra}, we see that it
is also enough to consider the case $P = \nabla$ (so $\mu = 1$). We
have then two possibilities for $s$, either $s \ge \mu = 1$ or $s \le
0$. The first case was already proved, as we have just mentioned. To
prove the case $s \le 0$, recall that the adjoint operator is also a
$\nabla$--differential operator by Proposition
\ref{mapping-properties} and that the negative order Sobolev spaces
$W^{-s,p}(M; E^*) \ede W_{0,\nabla }^{s,p'}(M; E)^*$, see Equation
\eqref{eq.def.neg.Sob}. As the statement is known for $\nabla^* :
W_{0,\nabla }^{1-s,p'}(M; E)^* \to W_{0,\nabla }^{-s,p'}(M; E)^*$
since $\nabla^* \in \Diff_b^{1}(M; T^*M \otimes E, E)$ by the first
part (since we reduced to $s \in \ZZ_+$), the desired statement is
obtained by taking adjoints.
\end{proof}

We shall use $P$ to denote all the maps in Corollary
\ref{cor.neg.order} induced by the original $P$. We shall not need the
case $s \notin \ZZ$, but that case can usually be handled by
interpolation \cite{LionsMagenes1, TriebelBook}.

Recall that $M$ is an $n$-dimensional manifold with metric $g$ and
$\nabla^{LC}$ is the associated Levi-Civita connection.

\begin{corollary}
\label{diff-oper-weight}
Let $1<p<\infty $. Let $\rho , f_0: M \to (0, \infty)$
be admissible weights with respect to the metric $g_0=\rho ^{-2}g$.
Let $P = \sum_{j=0}^{\mu} a_j \nabla^j $ and $a_{j} \in \CIb(M,g_0;\Hom(E; F))$, that
is $P$ is a $\nabla$--differential operator with coefficients in $\CIb$ with respect to the metric $g_0$.
Then $P$ extends by continuity to maps
\begin{equation}
\label{continuity-weight}
   f_0 W^{\ell , p}_{\nabla, \rho}(M; E) \, \to \,
   f_0 \rho^{-\mu} W^{\ell -\mu , p}_{\nabla, \rho}(M; F)\,,
   \quad \ell \in \NN\,.
\end{equation}
\end{corollary}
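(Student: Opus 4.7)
The plan is to reduce to Corollary~\ref{cor.neg.order} applied to the metric $g_0$ and its Levi-Civita connection $\nabla_0$, mirroring the proof of Proposition~\ref{prop-Sobolev-weight}. By the identification \eqref{weight-classic}, multiplication by the scalar function $f_0\rho^{-n/p}$ (resp.\ $f_0\rho^{-\mu-n/p}$) will give isometric isomorphisms
\[
   W^{\ell,p}(M,g_0;E)\xrightarrow{\sim} f_0 W^{\ell,p}_{\nabla,\rho}(M;E), \qquad W^{\ell-\mu,p}(M,g_0;F)\xrightarrow{\sim} f_0\rho^{-\mu} W^{\ell-\mu,p}_{\nabla,\rho}(M;F).
\]
Under these identifications, the continuity claimed for $P$ is equivalent to the continuity of the conjugated operator
\[
   \widetilde P \ede \rho^{\mu+n/p}f_0^{-1}\circ P\circ f_0\rho^{-n/p}\colon W^{\ell,p}(M,g_0;E)\to W^{\ell-\mu,p}(M,g_0;F).
\]

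\textbf{Verifying $\widetilde P\in\Diff^{\mu}_{b,\nabla_0}(M,g_0)$.} I would next show that $\widetilde P$ is a $\nabla_0$-differential operator of order $\mu$ with $\CIb(M,g_0)$ coefficients. By \eqref{LC-0} and the admissibility of $\rho$ with respect to $g_0$, the tensor $\nabla-\nabla_0$ has entries in $\CIb(M,g_0)$; together with Proposition~\ref{prop.diff.algebra} and the hypothesis $a_j\in\CIb(M,g_0;\Hom(E;F))$, this lets me rewrite $P=\sum_{j}a_j\nabla^{j}$ as a $\nabla_0$-differential operator of order $\mu$ with $\CIb(M,g_0)$ coefficients. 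The conjugation by the scalar function $m\ede f_0\rho^{-n/p}$ preserves this structure, since
\[
   d\log m = f_0^{-1}df_0 - (n/p)\rho^{-1}d\rho \in \CIb(M,g_0;T^*M)
\]
by the admissibility of $f_0$ and $\rho$. Finally, the remaining factor $\rho^{\mu}$ combines with the conformal rescaling $|v|_{g_0}=\rho^{k}|v|_g$ on $T^{*\otimes k}M$; this cancellation is exactly what justifies the $\rho^{-\mu}$ shift in the target space and yields $\CIb(M,g_0)$ coefficients for $\widetilde P$.

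\textbf{Conclusion.} With $\widetilde P\in\Diff^{\mu}_{b,\nabla_0}(M,g_0)$ in hand, I would apply Corollary~\ref{cor.neg.order}, with $(\nabla_0,g_0)$ in place of $(\nabla,g)$, to deduce its continuity $W^{\ell,p}(M,g_0;E)\to W^{\ell-\mu,p}(M,g_0;F)$, and hence the continuity of $P$ between the weighted spaces as stated.

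\textbf{Main obstacle.} The difficult point is the second step: the factor $\rho^{\mu}$ produced by the target weight is not bounded on its own, so one must carefully track how the conformal rescalings on $T^{*\otimes k}M$ interact with the coefficients of $P$ to see that this factor is fully absorbed into the $\CIb(M,g_0)$ coefficients of $\widetilde P$.
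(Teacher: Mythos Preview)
Your proposal is correct and follows essentially the same approach as the paper: both reduce to Corollary~\ref{cor.neg.order} for the metric $g_0$ via the identification \eqref{weight-classic}, by checking that the suitably conjugated operator has $\CIb(M,g_0)$ coefficients. The paper works with the slightly simpler conjugate $f_0^{-1}\rho^{\mu}Pf_0$ (the extra conjugation by $\rho^{-n/p}$ you include is harmless by admissibility of $\rho$), but otherwise the arguments coincide.
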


\begin{proof}
Recall that the relation between the weighted Sobolev space $f_0 W^{\ell , p}_{\nabla^{LC}, \rho} (M;E)$
(defined with respect to the metric $g$) and the classical Sobolev space $W^{\ell , p}(M,g_0;E)$
(defined with respect to the metric $g_0=\rho ^{-2}g$) is given by the formula
\begin{equation}
\label{weight-classic-1}
  f_0 W^{\ell , p}_{\nabla, \rho} (M;E) \seq f_0
  \rho ^{-\frac{n}{p}}W^{\ell , p}(M,g_0; E)\,.
\end{equation}
In addition, the assumption that $f_0$ is admissible shows that $f_0^{-1}
\rho^{\mu} Pf_0$ is a $\nabla$--differential
operator with coefficients in $\CIb$ with respect to the metric $g_0$, that is, $f_0^{-1}Pf_0$ and $P$ are
both $\nabla$--differential operators of the same type.
Then, in view of Corollary \ref{cor.neg.order}, this operator extends by continuity to the maps
\begin{equation}
\label{map-classic-space}
  W^{\ell , p}(M,g_0;E)\to W^{\ell -\mu,p}(M,g_0;F)\,, \quad \ell \in {\mathbb N}\,.
\end{equation}
Then formula \eqref{weight-classic-1} and the mapping property \eqref{map-classic-space} show that
$P$ extends by continuity to the maps given in \eqref{continuity-weight}.
\end{proof}

\subsection{Bounded mixed differential operators}
\label{sec.4}
The (FFC) condition gives the following finite generation property for
the algebra of $\nabla$-differential operators with
$\CIb$-coefficients. It is analogous to Proposition
\ref{prop.inclusion.mixed1}.

\begin{proposition}
\label{prop.inclusion.mixed2}
Let $\mu \in \NN$. Assume $M$ satisfies (FFC) and let $Z_1, Z_2,
\ldots, Z_N \in \CI(M)$ be a Fr\'echet systems of generators for
$\CI(M)$ as in \ref{rem.generation1},
\begin{enumerate}[(i)]
\item $\widetilde {\Diff}_b^{\nu}(M; F, G) \widetilde \Diff_b^\mu(M;
  E, F) \subset \widetilde \Diff_b^{\mu + \nu}(M; E, G)$.

\item $\widetilde \Diff_b^\mu(M; E, F) = \Diff_b^\mu(M; E, F)$.

\item then $\Diff_b^\mu(M; E, F)$ is linearly generated by $a
  \nabla_{X_1}\nabla_{X_2} \ldots \nabla_{X_r}$, where $r \le \mu$,
  $X_1, X_2, \ldots, X_r \in \{Z_1, Z_2, \ldots, Z_N\}$ and $a \in
  \CIb(M; \Hom(E, F))$.
\end{enumerate}
\end{proposition}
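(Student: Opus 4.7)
The plan is to mirror the proof of Proposition \ref{prop.inclusion.mixed1}, but carefully tracking that at every step the coefficients stay in $\CIb$. The (FFC) condition is what makes this possible: it supplies generators $Z_j \in \maW_b(M)$ and dual forms $\xi_j \in \CIb(M; T^*M)$ via Remark \ref{rem.generation2}.

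For part (i), I would argue by induction on $\mu$. The base case $\mu=0$ amounts to the inclusion $\CIb(M; \Hom(F;G)) \cdot \CIb(M; \Hom(E;F)) \subset \CIb(M; \Hom(E;G))$, which is a special case of Corollary \ref{prop-Sobolev} (taking $\ell = \infty$, $q = \infty$). For the induction step I would use the identity $\nabla_X a = \nabla_X(a) + a \nabla_X$, noting that if $X \in \maW_b(M)$ and $a \in \CIb$ then $\nabla_X(a) \in \CIb$ thanks to Lemma \ref{lemma.easy.cont}(\ref{item.easy.cont}) applied to the operator $\nabla_X \in \Diff_b^1$ (Lemma \ref{lemma.WbM}(i)) acting on $W_\nabla^{\infty,\infty}$.

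For part (ii), the inclusion $\widetilde{\Diff}_b^\mu \subset \Diff_b^\mu$ is immediate from Lemma \ref{lemma.WbM}(i) combined with the composition property of part (i) (and the identification $\widetilde \Diff_b^\mu \subset \Diff^\mu$ given by Proposition \ref{prop.inclusion.mixed1}(ii)). For the reverse inclusion, I would use the (FFC) decomposition
\begin{equation*}
\nabla \seq \sum_{i=1}^N \tau_{\xi_i} \nabla_{Z_i}
\end{equation*}
from Equation \eqref{eq.reform}, together with $\xi_i \in \CIb(M; T^*M)$ and $Z_i \in \maW_b(M)$. By part (i) and induction on $j$, it follows that $a \nabla^j \in \widetilde \Diff_b^j(M; E, F)$ whenever $a \in \CIb(M; \Hom(T^{*\otimes j}M \otimes E; F))$, which yields $\Diff_b^\mu(M;E,F) \subset \widetilde \Diff_b^\mu(M; E, F)$.

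Finally, for part (iii), I would let $\maD_\mu^b$ be the linear span of the products $a \nabla_{Z_{k_1}} \cdots \nabla_{Z_{k_r}}$ with $a \in \CIb(M; \Hom(E,F))$ and $r \le \mu$, and prove by induction on $r$ that every generator $a \nabla_{X_1} \cdots \nabla_{X_r}$ of $\widetilde \Diff_b^\mu(M; E, F)$ (with $X_j \in \maW_b(M)$) lies in $\maD_\mu^b$. Expand $X_1 = \sum_j \xi_j(X_1) Z_j$ using Remark \ref{rem.generation2}, observing $\xi_j(X_1) \in \CIb(M)$ by Corollary \ref{prop-Sobolev}; then use $\nabla_{X_1} a = X_1(a) + a \nabla_{X_1}$ iteratively to push the resulting $\CIb$-coefficients to the left past any coefficient block that might appear. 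The main obstacle, though largely bookkeeping, is to verify at each commutation that the newly created coefficient remains in $\CIb$; this follows since $\maW_b(M)$ acts on $\CIb$-sections by Lemma \ref{lemma.easy.cont}(\ref{item.easy.cont}), and by part (i) already proved. Combining these three parts then gives the desired finite generation of $\Diff_b^\mu(M;E,F)$.
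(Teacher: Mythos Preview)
Your proposal is correct and follows essentially the same approach as the paper: the paper's own proof simply says the argument is ``completely similar'' to that of Proposition~\ref{prop.inclusion.mixed1}, replacing the $\CI$ multiplication property by the $\CIb$ one, and invoking $\nabla_X \in \Diff_b^1$ for $X \in \maW_b(M)$, the (FFC) decomposition $\nabla = \sum_i \tau_{\xi_i}\nabla_{Z_i} \in \widetilde{\Diff}_b^1$, and the fact that $\nabla_X(a) \in \CIb$ when $a \in \CIb$ and $X \in \maW_b(M)$---exactly the ingredients you identify. One minor point: in part~(ii), the inclusion $\widetilde{\Diff}_b^\mu \subset \Diff_b^\mu$ is more naturally obtained from Lemma~\ref{lemma.WbM}(i) together with the composition property of Proposition~\ref{prop.diff.algebra}(iii) (closure of $\Diff_b$ under composition), rather than from part~(i) of the present proposition, which concerns composition within $\widetilde{\Diff}_b$.
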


\begin{proof}
The proof is completely similar to that of Proposition
\ref{prop.inclusion.mixed1}, by using the composition property
\begin{equation*}
\CIb(M; \Hom(F; G)) \CIb(M; \Hom(E; F)) \subset \CIb(M; \Hom(E; G))\,,
\end{equation*}
and the properties $\nabla_X \in \Diff_b^1(M; E, F)$ if $X \in \maW_b(M) := \CIb(M;
TM)$, $\nabla \in
\widetilde{\Diff}_m^1(M; E, T^*M \otimes E)$, and
$\nabla_{X} a = \nabla_X(a) + a \nabla_X \in \CIb(M; \Hom(E, F))$
if $a \in \CIb(M; \Hom(E, F))$ and $X \in \maW_b(M)$ instead
of the corresponding statements in that proof.
\end{proof}

Let us recall that a vector bundle $E \to M$ is said to have {\em
  totally bounded curvature} if its curvature $R^E \in \CIb(M;
\Lambda^2 T^*M \otimes \End(E))$. Recall that if $M$ satisfies the
(FFC) condition, then $\widetilde \Diff^\mu = \Diff^\mu$ and
$\widetilde \Diff_b^\mu = \Diff_b^\mu$. We next show that we can
choose the vector fields $X_j$ in the above proposition in the right
order.

\begin{corollary}\label{cor.order2}
Assume $M$ satisfies the (FFC) condition and let $(Z_j)$, $1 \le j \le
N$, be a Fr\'echet generating system for $\maW_b(M)$.  Let us assume
also that $E \to M$ has totally bounded curvature
\begin{enumerate}[(i)]
\item If $X, Y \in \maW_b(M)$, then $\nabla_X^E \nabla_Y^E -\nabla_Y^E
  \nabla_X^E - \nabla_{[X, Y]}^E \in \CIb(M; \End(E))$.

\item Consequently, $\Diff_b^\mu(M; E, F)$ is linearly generated by $a
  \nabla^E_{Z_{k_1}} \ldots \nabla^E_{Z_{k_r}}$, where $1 \le k_1 \le
  k_2 \le \ldots \le k_r \le N$, $r \le \mu$, and $a \in \CIb(M;
  \Hom(E; F))$.
\end{enumerate}
\end{corollary}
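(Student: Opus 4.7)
The plan is to prove (i) directly from the curvature formula and the totally bounded curvature hypothesis, and then derive (ii) from Proposition \ref{prop.inclusion.mixed2}(iii) by reordering the factors via a double induction on the order and on the number of ``inversions'' in the index sequence.

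For part (i), note that by the defining formula \eqref{eq.def.R} for the curvature, we have $\nabla_X^E \nabla_Y^E - \nabla_Y^E \nabla_X^E - \nabla_{[X,Y]}^E = R^E(X, Y)$. The totally bounded curvature hypothesis means $R^E \in \CIb(M; \Lambda^2 T^*M \otimes \End(E))$. Since $X, Y \in \maW_b(M) = \CIb(M; TM)$, the contraction $R^E(X, Y)$ is the pointwise evaluation of three $\CIb$ objects, so by the multiplication property (Proposition \ref{prop-Sobolev-new} and the corollaries that followed) it lies in $\CIb(M; \End(E))$. Note also that $[X, Y] \in \maW_b(M)$ by Lemma \ref{lemma.WbM}(ii), so $\nabla_{[X, Y]}^E \in \Diff_b^1$ is well-defined.

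For part (ii), start from Proposition \ref{prop.inclusion.mixed2}(iii), which writes every $P \in \Diff_b^\mu(M; E, F)$ as a linear combination of monomials $Q = a\, \nabla^E_{Z_{k_1}} \nabla^E_{Z_{k_2}} \cdots \nabla^E_{Z_{k_r}}$ with $r \le \mu$, $a \in \CIb(M; \Hom(E, F))$, and arbitrary indices $k_j \in \{1, \ldots, N\}$. The goal is to rewrite $Q$ as a linear combination of monomials of the same type but with $k_1 \le k_2 \le \ldots \le k_r$. Proceed by induction on $\mu$; for a fixed $\mu$ do a secondary induction on the number of inversions in the index sequence. If $(k_1, \ldots, k_r)$ is not non-decreasing, pick an adjacent pair with $k_j > k_{j+1}$ and use (i) to write
\begin{equation*}
    \nabla^E_{Z_{k_j}} \nabla^E_{Z_{k_{j+1}}} \seq \nabla^E_{Z_{k_{j+1}}} \nabla^E_{Z_{k_j}} + \nabla^E_{[Z_{k_j}, Z_{k_{j+1}}]} + R^E(Z_{k_j}, Z_{k_{j+1}})\,.
\end{equation*}
Substituting this identity into $Q$ yields three summands: the first has strictly fewer inversions and the same order $r$, so it is handled by the inner induction; the second, after using Lemma \ref{lemma.FFC+div}(iv) to write $[Z_{k_j}, Z_{k_{j+1}}] = \sum_k L^k_{k_j k_{j+1}} Z_k$ with $L^k_{ij} \in \CIb(M)$, becomes a sum of monomials of order $r - 1$ with $\CIb$-coefficients; the third has an $\End(E)$-valued $\CIb$-factor in the middle and hence, after commuting that factor past the preceding $\nabla^E_{Z_{k_i}}$'s via the identity $\nabla_X b = b \nabla_X + X(b)$ (with each resulting $X(b)$, $\nabla_X(b)$, etc.\ staying in $\CIb$ by Lemma \ref{lemma.WbM}(i) and Proposition \ref{prop.diff.algebra}), also becomes a sum of monomials of order at most $r - 1$ with $\CIb$-coefficients. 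By the outer induction on $\mu$, those lower-order pieces already admit non-decreasing orderings, completing the induction.

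The main obstacle, and essentially the only one, is the bookkeeping in the third (curvature) summand and in the reduction of the second: one must verify that after pulling $R^E(Z_{k_j}, Z_{k_{j+1}})$ or the coefficients $L^k_{k_j k_{j+1}}$ from the middle of a product of covariant derivatives out to the left, the result remains in $\Diff_b^{r-1}(M; E, F)$. This is ensured by closure of $\CIb$ under multiplication and under differentiation along $\maW_b(M)$-vector fields, combined with Proposition \ref{prop.inclusion.mixed2}(i). Part (i) is routine once the curvature identity is invoked; the bulk of the work in (ii) is organizing this reordering procedure inductively.
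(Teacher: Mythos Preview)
Your proof is correct and follows essentially the same approach as the paper. The paper's own argument is a two-line sketch: for (i) it invokes the curvature identity \eqref{eq.def.R} together with $R^E \in \CIb$ and $X, Y \in \maW_b(M)$, and for (ii) it simply says ``proved in the same way as Corollary \ref{cor.order1}, but taking into account also (i),'' where Corollary \ref{cor.order1} in turn just says that the curvature identity lets one commute adjacent $\nabla_{Z_j}^E$'s ``up to lower order terms'' and that ``the proof is completed by induction.'' Your double induction on order and on inversions, and your explicit tracking of the three summands (swapped term, Lie-bracket term via Lemma \ref{lemma.FFC+div}(iv), curvature term), is exactly the natural way to make that sketch rigorous.
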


\begin{proof}
The statement (i) follows also from the formula \eqref{eq.def.R}
taking into account that, in this case, $R^E(X, Y) \in
\CIb(M; \End(E))$, since $R^E \in \CIb(M; \Lambda^2 T^*M
\otimes \End(E))$ and $X, Y \in \maW_b(M)$.

Finally, the last part is proved in the same way Corollary
\ref{cor.order1}, but taking into account also (i).
\end{proof}

\subsection{Equivalent definitions of $\nabla$-Sobolev spaces}

Let $Z_1, Z_2, \ldots, Z_N \in \maW_b(M) := W^{\infty,
  \infty}(M; TM)$ be a Fr\'echet system of generators of $\maW_b(M)$
as $\CIb$-module, and let $\{\xi_1,
\xi_2, \ldots, \xi_\mu\}$ be the dual system,
as in Remark \ref{rem.generation2}. We shall
write $\vect{k} = (k_1, k_2, \ldots, k_\mu)$, $1 \le k_1, k_2,
\ldots k_\mu \le N$.

\begin{lemma}
\label{decomposition}
Assume that $M$ satisfies the (FFC) condition. Then any $w \in L^p(M;
T^{*\otimes \mu}M \otimes E)$ can be written as
\begin{equation*}
w \seq \sum_{\vect{k}} \xi_{k_1} \otimes \xi_{k_2} \otimes \ldots
\otimes \xi_{k_\mu} \otimes i_{Z_\mu}i_{Z_{\mu - 1}} \ldots
i_{Z_1}(w)\,.
\end{equation*}
\end{lemma}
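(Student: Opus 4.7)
The plan is to reduce this to the pointwise identity established in Remark~\ref{rem.generation2}, namely that any 1-form $\omega \in T^*_x M$ can be expanded as
\begin{equation*}
   \omega \seq \sum_{j=1}^N \omega(Z_j)\, \xi_j \seq \sum_{j=1}^N i_{Z_j}(\omega)\, \xi_j\,,
\end{equation*}
and then to iterate this identity in each of the $\mu$ cotangent factors of $w$. The (FFC) hypothesis is used precisely to guarantee that the $\xi_j \in \CIb(M; T^*M)$ and $Z_j \in \maW_b(M)$ are globally defined with bounded norms, so that both sides of the claimed identity belong to $L^p$.

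First, I would verify the identity on a simple tensor. For $w_x = \eta_1 \otimes \eta_2 \otimes \ldots \otimes \eta_\mu \otimes e \in T^{*\otimes \mu}_x M \otimes E_x$, successive contractions in the first available cotangent slot give
\begin{equation*}
    i_{Z_{k_\mu}} i_{Z_{k_{\mu-1}}} \cdots i_{Z_{k_1}}(w_x) \seq \eta_1(Z_{k_1})\, \eta_2(Z_{k_2}) \cdots \eta_\mu(Z_{k_\mu})\, e\,.
\end{equation*}
Summing against $\xi_{k_1} \otimes \ldots \otimes \xi_{k_\mu}$ and applying the Remark~\ref{rem.generation2} identity in each factor yields
\begin{equation*}
    \sum_{\vect{k}} \xi_{k_1} \otimes \cdots \otimes \xi_{k_\mu} \otimes i_{Z_{k_\mu}} \cdots i_{Z_{k_1}}(w_x) \seq \Big(\sum_{k_1} \eta_1(Z_{k_1}) \xi_{k_1}\Big) \otimes \cdots \otimes \Big(\sum_{k_\mu} \eta_\mu(Z_{k_\mu}) \xi_{k_\mu}\Big) \otimes e \seq w_x\,.
\end{equation*}
By linearity, the identity holds for every element of the finite-dimensional fiber $T^{*\otimes \mu}_x M \otimes E_x$, and hence it holds pointwise a.e.\ for any measurable section $w$.

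It only remains to check that the right-hand side genuinely defines an element of $L^p$. For each multi-index $\vect{k}$, the iterated contraction $i_{Z_{k_\mu}} \cdots i_{Z_{k_1}}(w)$ is a section of $E$ whose pointwise norm is bounded by $\|Z_{k_1}\|_\infty \cdots \|Z_{k_\mu}\|_\infty \|w\|_{T^{*\otimes \mu}M \otimes E}$, since the $Z_{k_j}$ lie in $\maW_b(M)$. Multiplying by the bounded forms $\xi_{k_j} \in \CIb(M; T^*M)$ gives a section of $T^{*\otimes \mu}M \otimes E$ whose $L^p$ norm is controlled by $\|w\|_{L^p}$ times a constant depending only on the chosen Fr\'echet system. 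Summing over the finitely many $\vect{k} \in \{1, \ldots, N\}^\mu$ then shows the right-hand side is an element of $L^p(M; T^{*\otimes \mu}M \otimes E)$, and equality in $L^p$ follows from the pointwise identity. There is no serious obstacle; the only point requiring a moment's attention is the bookkeeping that shows the iterated contractions $i_{Z_{k_\mu}} \cdots i_{Z_{k_1}}$ land in the correct slots so that the telescoping across the $\mu$ factors genuinely reconstructs $w$.
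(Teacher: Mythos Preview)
Your proof is correct and follows essentially the same approach as the paper: both reduce to the pointwise identity $\omega = \sum_j \omega(Z_j)\,\xi_j$ from Remark~\ref{rem.generation2} (equivalently Remark~\ref{rem.generation1}) and iterate it across the $\mu$ cotangent factors via the tensor product structure. Your write-up is in fact more thorough than the paper's, which simply cites the 1-form identity, the definition of $i_{Z_j}$, and the tensor-product formula $(\xi_i \otimes \xi_j)(X \otimes Y) = \xi_i(X)\xi_j(Y)$ without spelling out the simple-tensor computation or the $L^p$ bound.
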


\begin{proof}
The result follows from the relation $\omega(X) = \sum_{j=1}^N \xi_j(X)
\omega(Z_j)$ (see Remark \ref{rem.generation1}), the definition of $i_{Z_j}$,
which shows that $\omega (Z_j)=\langle \omega ,Z_j\rangle =i_{Z_j}(\omega )$, and the expression
$(\xi _i\otimes \xi _j)(X \otimes Y)  =\xi _i(X)\xi _j(Y)$
 of the tensor product of two $1$-forms $\xi_i(X)$ and $\xi_j(Y)$.
\end{proof}

The following result gives several alternative description of Sobolev
spaces $W_\nabla^{s, p}(M; E)$, $s \in \NN$, $1 \le p \le \infty$, in
terms of vector fields similar to \cite[Proposition 3.2]{GN17}, where
part of this result was proved for manifolds with bounded geometry.
Let us record the following easy lemma.
\begin{lemma}\label{lemma.finite}
Assume that $M$ satisfies the (FFC) condition. Then $w \in L^p(M;
T^{*\otimes \mu}M \otimes E)$ if, and only if, $a w \in L^p(M; E)$ for
all $a \in \CIb(M; \Hom(T^{*\otimes \mu}M \otimes E; E))$.
\end{lemma}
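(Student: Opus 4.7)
The plan is to prove the two implications separately, the forward direction being immediate and the converse reducing to the decomposition provided by Lemma~\ref{decomposition}. For the forward direction, if $w \in L^p(M; T^{*\otimes \mu}M \otimes E)$ and $a \in \CIb(M; \Hom(T^{*\otimes \mu}M \otimes E; E))$, then in particular $a \in L^\infty$, so the pointwise bound $\|a(x) w(x)\|_E \le \|a(x)\|_{\mathrm{op}} \|w(x)\|$ gives $\|aw\|_{L^p(M; E)} \le \|a\|_{L^\infty} \|w\|_{L^p}$, hence $aw \in L^p(M; E)$. (Alternatively, this is the case $\ell = 0$ of Corollary~\ref{prop-Sobolev}.)

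For the converse, the idea is to pick test sections $a$ that invert the decomposition in Lemma~\ref{decomposition}. Using the Fr\'echet generating system $\{Z_1, \ldots, Z_N\} \subset \maW_b(M)$ and its dual $\{\xi_1, \ldots, \xi_N\} \subset \CIb(M; T^*M)$ from Remark~\ref{rem.generation2}, for each multi-index $\vect{k} = (k_1, \ldots, k_\mu)$ with $1 \le k_j \le N$, I would set
\[
a_{\vect{k}} \ede i_{Z_{k_\mu}} \circ i_{Z_{k_{\mu-1}}} \circ \cdots \circ i_{Z_{k_1}} \in \CI(M; \Hom(T^{*\otimes \mu}M \otimes E; E))\,.
\]
Since each $Z_{k_j} \in \CIb(M; TM)$, the single contraction $i_{Z_{k_j}}$ has $\CIb$-coefficients, and an iteration of such contractions remains in $\CIb$ thanks to the multiplicative property of Proposition~\ref{prop-Sobolev} (applied with $p = q = \infty$). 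Therefore $a_{\vect{k}} \in \CIb(M; \Hom(T^{*\otimes \mu}M \otimes E; E))$, and the hypothesis yields $a_{\vect{k}} w \in L^p(M; E)$ for every $\vect{k}$.

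Finally, Lemma~\ref{decomposition} reassembles $w$ as the finite sum
\[
w \seq \sum_{\vect{k}} \xi_{k_1} \otimes \xi_{k_2} \otimes \cdots \otimes \xi_{k_\mu} \otimes (a_{\vect{k}} w)\,,
\]
where the sum runs over $\vect{k} \in \{1, \ldots, N\}^\mu$ (so at most $N^\mu$ terms). Since $\xi_{k_j} \in \CIb(M; T^*M) \subset L^\infty$, each summand lies in $L^p(M; T^{*\otimes \mu}M \otimes E)$ with norm bounded by $\prod_j \|\xi_{k_j}\|_{L^\infty} \cdot \|a_{\vect{k}} w\|_{L^p}$, and a finite sum of $L^p$-sections is again in $L^p$. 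Hence $w \in L^p(M; T^{*\otimes \mu}M \otimes E)$.

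The argument is essentially algebraic once Lemma~\ref{decomposition} and the $\CIb$-module structure are in hand, so there is no serious obstacle; the only point requiring care is verifying that the iterated contractions $a_{\vect{k}}$ really lie in $\CIb$, which follows directly from the fact that $\maW_b(M)$ and $\CIb(M; T^*M)$ are $\CIb(M)$-modules closed under the natural pairings.
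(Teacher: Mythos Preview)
Your proof is correct and follows essentially the same approach as the paper: the forward direction is the $\ell=0$ case of Corollary~\ref{prop-Sobolev}, and the converse uses precisely the iterated contractions $a_{\vect{k}} = i_{Z_{k_\mu}} \cdots i_{Z_{k_1}}$ together with the decomposition of Lemma~\ref{decomposition} to rebuild $w$ as a finite sum of $L^p$-sections. Your write-up is in fact slightly more explicit than the paper's in verifying that $a_{\vect{k}} \in \CIb$ and in bounding each summand.
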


\begin{proof}
If $w \in L^p(M; T^{*\otimes \mu}M \otimes E)$ and $a \in \CIb(M;
\Hom(T^{*\otimes \mu}M \otimes E; E))$, then we have already seen that
$aw \in L^p(M; E)$.

Conversely, let $Z_1, Z_2, \ldots, Z_N \in \maW_b(M) := W^{\infty,
  \infty}(M; TM)$ be a Fr\'echet system of generators of $\maW_b(M)$
as $\CIb$-module, as in Remark \ref{rem.generation2}. Let $\{\xi_1,
\xi_2, \ldots, \xi_\mu\}$ be the dual system, again as in that
remark. Let $\vect{k} = (k_1, k_2, \ldots, k_\mu)$, $1 \le k_1, k_2,
\ldots k_\mu \le N$.  We let $a$ range through the composition of
contractions $i_{Z_\mu}i_{Z_{\mu - 1}} \ldots i_{Z_1}$, which recovers
$a$. Then Lemma \ref{decomposition} implies that
\begin{equation*}
w \seq \sum_{\vect{k}} \xi_{k_1} \otimes \xi_{k_2} \otimes \ldots
\otimes \xi_{k_\mu} \otimes i_{Z_\mu}i_{Z_{\mu - 1}} \ldots
i_{Z_1}(w)\,.
\end{equation*}
If all $aw \in L^p(M; E)$, then by taking $a := \xi_{k_1} \otimes
  \xi_{k_2} \otimes \ldots \otimes \xi_{k_\mu} \otimes
  i_{Z_\mu}i_{Z_{\mu - 1}} \ldots i_{Z_1}(w)$, we get that all
\begin{equation*}
\xi_{k_1} \otimes \xi_{k_2} \otimes \ldots \otimes \xi_{k_\mu}
\otimes i_{Z_\mu}i_{Z_{\mu - 1}} \ldots i_{Z_1}(w) \in L^p(M;
T^{*\otimes \mu}M \otimes E) \,,
\end{equation*}
and hence $w \in L^p(M; T^{*\otimes \mu}M \otimes E)$.
\end{proof}

The following type of descriptions is often used in the setting
of weighted Sobolev spaces, see, for instance
\cite{ammannNistorWReg, BNZ3D1, BMNZ, CDN12, daugeBook,
Kondratiev67, KMR, NP}.

\begin{proposition}\label{prop.alternative}
Let $Z_1, Z_2, \ldots, Z_N \in \maW_b(M) := W^{\infty, \infty}(M; TM)$
be a Fr\'echet system of generators of $\maW_b(M)$ as $\CIb$-module,
as in Remark \ref{rem.generation2}, if $M$ satisfies the (FCC)
condition. Let $s \in \NN$ and $1 \le p \le \infty$. Then the
following spaces all coincide with $W_\nabla^{s, p}(M; E)$ under the
listed additional conditions:
\begin{enumerate}[(i)]
\item $W_\nabla^{s, p}(M; E) \seq \{ u \, \vert\ P u \in L^p(M; E)\,,
  \ \forall P \in \Diff_b^s(M; E, F)\}$.

\item $W_\nabla^{s, p}(M; E) \seq \{ u \, \vert\ P u \in L^p(M; E)\,,
  \ \forall P \in \Diff_b^s(M; E)\}$, provided that $M$ satisfies the
  (FFC) condition.

\item $W^{\ell, p}_{\nabla }(M; E) \seq \{\, u \, \vert
  \ \nabla^E_{Z_{k_1}} \nabla^E_{Z_{k_2}} \ldots \nabla^E_{Z_{k_j}} u
  \in L^p(M; E),\ j \leq \ell, \ 1 \le k_1, k_2, \ldots, k_j \leq N \,
  \}$, provided that $M$ satisfies the (FFC) condition.

\item $W^{\ell, p}_{\nabla }(M; E) \seq \{\, u \, \vert
  \ \nabla^E_{Z_{k_1}} \nabla^E_{Z_{k_2}} \ldots \nabla^E_{Z_{k_j}} u
  \in L^p(M; E),\ j \leq \ell, \ 1 \le k_1 \le k_2 \le \ldots \le k_j
  \leq N \, \}$ provided that $M$ satisfies the (FFC) condition and
  $E$ has totally bounded curvature.
\end{enumerate}
\end{proposition}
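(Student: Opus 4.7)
The plan is to prove the four parts in order, with each relying on the previous plus one additional ingredient. Parts (i) and (ii) are essentially a duality between differential operators and the components $\nabla^j u$, while (iii) and (iv) reduce to (ii) by the generation result of Proposition \ref{prop.inclusion.mixed2} (respectively Corollary \ref{cor.order2}). Throughout, the forward inclusion (``if $u \in W_\nabla^{\ell,p}$ then $Pu \in L^p$'') follows in every case from the continuity of $\CIb$-coefficient $\nabla$-differential operators, Lemma \ref{lemma.easy.cont}\eqref{item.easy.cont} with $k=0$, combined with Lemma \ref{lemma.WbM}(i) and Proposition \ref{prop.diff.algebra} to see that the operators in question lie in $\Diff_b^\ell$; so the work is in the reverse direction.

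For (i), the reverse inclusion is immediate: take $F := T^{*\otimes j}M \otimes E$ and $P := \nabla^j$ for each $0 \le j \le s$; this $P$ has identity coefficient, hence is in $\Diff_b^s(M;E,F)$, and the hypothesis yields $\nabla^j u \in L^p$, i.e.\ $u \in W_\nabla^{s,p}(M;E)$ by Definition \ref{def.Sobolev}. For (ii), the target bundle is forced to be $E$, so $\nabla^j$ itself is unavailable and this is where the (FFC) assumption enters through Lemma \ref{lemma.finite}. Given the hypothesis, I would fix $j \le s$ and, for arbitrary $a \in \CIb(M;\Hom(T^{*\otimes j}M \otimes E;E))$, note that $a\nabla^j \in \Diff_b^s(M;E)$ by Definition \ref{def.diff.op}, so $a\nabla^j u \in L^p(M;E)$; Lemma \ref{lemma.finite} then upgrades this to $\nabla^j u \in L^p(M;T^{*\otimes j}M \otimes E)$, finishing (ii).

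For (iii), the reverse inclusion reduces to (ii): given $u$ satisfying the vector-field condition, one has to check that $Pu \in L^p(M;E)$ for every $P \in \Diff_b^\ell(M;E)$. By Proposition \ref{prop.inclusion.mixed2}(iii), every such $P$ is a finite linear combination of operators $a\nabla_{Z_{k_1}}\cdots\nabla_{Z_{k_r}}$ with $r \le \ell$ and $a \in \CIb(M;\End(E))$; applying these to $u$ gives, by hypothesis, something in $L^p(M;E)$, and multiplication by $a \in \CIb$ preserves $L^p$ (a trivial instance of Corollary \ref{prop-Sobolev}), so $Pu \in L^p(M;E)$ as required. Part (iv) is the same argument but with Corollary \ref{cor.order2}(ii) in place of Proposition \ref{prop.inclusion.mixed2}(iii); the added hypothesis that $E$ has totally bounded curvature is precisely what is needed there to reorder the $\nabla_{Z_k}$ modulo $\CIb$-coefficient lower-order terms via the commutator formula \eqref{eq.def.R}, and then an induction on $r$ absorbs those lower-order terms into the induction hypothesis.

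The only real obstacle is the passage from ``$Pu \in L^p$ for all scalar-target $P$'' to ``$\nabla^j u$ has all its tensor components in $L^p$,'' which is precisely the content of Lemma \ref{lemma.finite}; once that lemma is in hand, the proof is a bookkeeping exercise chaining (i) $\Rightarrow$ (ii) $\Rightarrow$ (iii) $\Rightarrow$ (iv).
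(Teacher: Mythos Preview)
Your proposal is correct and follows essentially the same route as the paper: both prove (i) by taking $P=\nabla^j$, prove (ii) via Lemma~\ref{lemma.finite} applied to $w=\nabla^j u$, and reduce (iii) and (iv) to (ii) using the finite generation results Proposition~\ref{prop.inclusion.mixed2}(iii) and Corollary~\ref{cor.order2}(ii), respectively. Your write-up is in fact more explicit about the forward inclusions and the bookkeeping than the paper's sketch.
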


\begin{proof}
The first characterization of the Sobolev spaces $W_\nabla^{\mu, p}(M;
E)$ follows from the definition of $\nabla$-Sobolev spaces (Definition
\ref{def.Sobolev}) and $\nabla$-differential operators (Definition
\ref{def.diff.op}), since it is enough to take $P$ among the monomials
$\nabla^j$, $0 \le j \le \mu$. (Indeed, any $P \in \Diff_b^s(M; E, F)$
has the form $P = \sum_{j=0}^{\mu} a_j \nabla^j $ with $a_{j} \in
\CIb(M; \Hom(E; F))$.)

The second point is similar. Indeed, we have $w \in L^p(M; T^{*\otimes
  k}M \otimes E$ if, and only if, $a w \in L^p(M; T^{*\otimes k}M
\otimes E$ for all $a \in \CIb(M; \Hom(T^{*\otimes k}M \otimes E)$. By
applying this observation to $w := \nabla^j u$, $j \le \mu$, and using
Lemma \ref{lemma.finite}, the definitions of $\nabla$-Sobolev spaces
and $\nabla$-differential operators, we obtain the result.

The third and fourth points are also similar. They follow by combining
(i) with Lemma \ref{lemma.finite} and Proposition
\ref{prop.equality.mixed} (for (iii)), respectively
Corollary \ref{cor.order2} for (iv).
\end{proof}


\subsection{Bidifferential operators and Dirichlet forms}
Let $M_1$ and $M_2$ be two topological spaces and let
\begin{equation*}
  \pi_j : M_1 \times M_2 \to M_j
\end{equation*}
be the projection onto the $j$th component, $j = 1, 2$. For any two
real or complex vector bundles $E_j \to M_j$, we let
\begin{equation*}
    E_1 \boxtimes E_2 \ede \pi_1^*(E_1) \otimes \pi_2^*(E_2) \to M_1
    \times M_2
\end{equation*}
be the {\em external tensor product} of $E_1$ and $E_2$. It is a
vector bundle on $M_1 \times M_2$. More concretely, if $x_j \in M_j$
and $E_{j,x_j}$ is the fiber of $E_j$ above $x_j$, then the fiber of
$E_1 \boxtimes E_2$ above $(x_1, x_2)$ is $E_{1, x_1} \otimes E_{2,
  x_2}$. If $M_1 = M_2 = M$, in which case we shall always regard $M$
as being diagonally embedded in $M \times M$, then, of course,
\begin{equation*}
    E_1 \boxtimes E_2 \vert_{M} \seq E_1 \otimes E_2 \,.
\end{equation*}

\begin{remark}\label{rem.splitting}
If $M_j$ are smooth manifolds and $E_j \to M_j$ are smooth vector
bundles endowed with connections, $j = 1, 2$, then $E_1 \boxtimes
E_2$ is endowed with the canonically induced connection from
$\pi_j^*(E_j)$ (which acts trivially on the fiber $M_{3-j}$ of $\pi_j
: M_1 \times M_2 \to M_j$). Let us take a closer look at this induced
connection on $E_1 \boxtimes E_2$.  We first notice that we have a
canonical isomorphism
\begin{equation*}
    T(M_1 \times M_2) \simeq TM_1 \times TM_2 \simeq \pi_1^*(TM_1)
    \oplus \pi_2^*(TM_2)\,.
\end{equation*}
Let $p_j : T^*(M_1 \times M_2) \to \pi_j^*(T^*M_j)$ be the induced
projections and let
\begin{equation*}
  \nabla^{E_1 \boxtimes E_2} \seq \nabla_1 + \nabla_2 \ \mbox{ where }
  \ \nabla_j \ede (p_j \otimes id_{E_1 \boxtimes E_2}) \circ
  \nabla^{E_1 \boxtimes E_2}\,.
\end{equation*}
If $u_j \in \CIc(M_j; E_j)$, $j = 1, 2$, we let $v := u_1 \otimes u_2
\in \CIc(M_1 \times M_2 ; E_1 \boxtimes E_2)$, that is, $v(x_1, x_2)
\ede (u_1 \otimes u_2) (x_1, x_2) \ede u(x_1) \otimes u_2(x_2).$ We
then obtain
\begin{equation*}
    \nabla_1 v \seq \nabla u_1 \otimes u_2\,,\ \nabla_2 v \seq u_1
    \otimes \nabla u_2\,,\, \mbox{ and hence }\, \nabla_1 \nabla_2
    \seq \nabla_2 \nabla_1\,.
\end{equation*}
\end{remark}

Let $V$ be a complex vector space or vector bundle and let
$\overline{V}$ denote the conjugate space to $V$. More precisely,
$\overline{V} = V$ as additive groups, but with the external
multiplication ${\overline z}\, {\overline v}= \overline{z v}$, $z \in
\CC$, $v \in V$, where $\overline{v}$ denotes the image in
$\overline{V}$ of an element $v \in V$.


\begin{definition}\label{def.bidifferential}
Let $E, F \to M$ be two smooth vector bundles endowed with
connections. A {\em $\nabla$--bidifferential operator} on $(E, F)$ is a linear map
\begin{equation*}
\mfkb^{\nabla} : \CIc(M \times M ; E \boxtimes \overline{F}) \to \CIc(M)
\end{equation*}
of the form $\mfkb^{\nabla} v = P v\vert_{M}$, where $P : \CIc(M
\times M; E \boxtimes \overline{F}) \to \CIc(M \times M)$ is a
$\nabla$-differential operator with smooth
coefficients.

If we can choose $P$ to have $\CIb$-coefficients (that
is in $\Diff_b(M \times M; E \boxtimes \overline{F}, \CC)$), then we
say that $\mfkb^{\nabla}$ has $\CIb$-coefficients as well.
\end{definition}

Bidifferential operators appeared also in the framework of deformation
quantization. Let us obtain a more explicit form of the
$\nabla$-bidifferential operators.

\begin{remark}\label{rem.can.form}
We use the notation of Definition \ref{def.bidifferential} and let
$\pi_ j: M \times M \to M$, $j = 1, 2$, be the two projections.  Let
$\nabla^{E \boxtimes \overline{F}} \seq \nabla_1 + \nabla_2$ be the
decomposition of Remark \ref{rem.splitting}. Thus, if $u \in \CIc(M;
E)$, $w \in \CIc(M; F)$, and $v := u \otimes \overline{w} \in \CIc(M
\times M ; E \boxtimes \overline{F})$, then $\nabla_1 v = \nabla u
\otimes \overline{w}$, $\nabla_2 v = u \otimes \nabla
\overline{w}$. Therefore
\begin{multline*}
    P v \ede a \cdot \nabla^{tot}v \seq \sum_{j=0}^\mu a^{[j]}
    \nabla^j v \seq \sum_{i + j \le \mu } \tilde \mfka_{ij} \big(
    \nabla_1^i \nabla_2^j v \big )\\
    \seq \sum_{i + j \le \mu} \tilde \mfka_{ij} \big[ (\nabla^iu)
      \otimes (\nabla^j \overline{w}) \big] \in \CIc(M \times M) \,,
\end{multline*}
where each $\tilde \mfka_{ij} \in \CI\Big(M \times M; \big[ (T^{*
    \otimes i} M \otimes E) \boxtimes (T^{* \otimes j} M \otimes
  \overline{F})\big]^\prime \Big )$ is obtained in a canonical
(linear) way from $a \in \CI\big (M \times M; \maF_\mu^{M \times M}(E
\boxtimes \overline{F})^\prime \big)$. In particular, if $\tilde a \in
\CIb$, then all $\tilde a_{ij} \in \CIb$ as well and their bounds are
controlled by the bounds for $a$. Let us assume also that $F$ is
endowed with a Hermitian metric $(\,,\,)_F$, regarded as a bilinear
form on $F \otimes \overline{F}$. We let $(\,,\,)_{T^{*\otimes j}M
  \otimes F}$ be the corresponding hermitian form (i.e. bilinear form
on $T^{*\otimes j}M \otimes F \otimes T^{*\otimes j}M \otimes
\overline{F}$). Then there exist unique
\begin{equation*}
    \mfka_{ij} \in \CI \big(M; \Hom(T^{*\otimes i}M \otimes E;
    T^{*\otimes j}M \otimes F)\big)
\end{equation*}
such that $(\mfka_{ij} \xi, \eta) = \tilde a_{ij} \vert_M (\xi \otimes
\overline{\eta})$ and hence we have the following {\em canonical form}
for $\mfkb^\nabla$:
\begin{equation*}
    \mfkb^{\nabla} v(x) \seq \mfkb^{\nabla} (u \otimes
    \overline{w})(x) \seq \sum_{i + j \le \mu} \big (\mfka_{ij}(x)
    \nabla^iu(x), \nabla^j w(x) \big)_{T^{*\otimes j} M \otimes F}\,.
\end{equation*}
If $\mfka_{ij} = 0$ for $i > m$ or $j > m$, we shall say that
$\mfkb^\nabla$ has order $\le 2m$.
\end{remark}

\begin{lemma}\label{lemma.generation.bidiff}
Let $P \in \Diff^m(M; E, G)$ and $Q \in \Diff^m(M; F, G)$, where $G$
is a Hermitian vector bundle. Then $\mfkb(u \otimes \overline{w}):=
(Pu, Qw)_G$ is a $\nabla$-bidifferential operator $\CIc(M \times M ; E
\boxtimes \overline{F}) \to \CIc(M)$ of order $\le 2m$ with smooth
coefficients. If $P$ and $Q$ have $\CIb$-coefficients, then $\mfkb$
will also have $\CIb$-coefficients.
\end{lemma}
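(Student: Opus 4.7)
My plan is to exhibit a $\nabla$-differential operator $\tilde P$ on $M\times M$ satisfying $\tilde P v\big|_M = \mfkb(v)$, and then to verify that $\tilde P$ has smooth (resp.\ $\CIb$) coefficients precisely when $P$ and $Q$ do.

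First, I would write $P=\sum_{i=0}^{m}p_i\nabla^i$ and $Q=\sum_{j=0}^{m}q_j\nabla^j$ as in Definition~\ref{def.diff.op}, with smooth (resp.\ $\CIb$) bundle maps $p_i\colon T^{*\otimes i}M\otimes E\to G$ and $q_j\colon T^{*\otimes j}M\otimes F\to G$. The identities $\nabla_1^i(u\otimes\overline w)=(\nabla^i u)\otimes\overline w$ and $\nabla_2^j(u\otimes\overline w)=u\otimes\overline{\nabla^j w}$ coming from the splitting $\nabla^{E\boxtimes\overline F}=\nabla_1+\nabla_2$ of Remark~\ref{rem.splitting} strongly suggest building $\tilde P$ as a composition of two one-sided operators, one for $P$ in the first variable and one for $Q$ in the second.

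Accordingly, I would consider on $M\times M$ the two operators
\begin{equation*}
    \tilde P_1 \ede \sum_{i=0}^m (\pi_1^*p_i)\,\nabla_1^i \ \mbox{ and }\
    \tilde Q_2 \ede \sum_{j=0}^m (\pi_2^*\overline{q_j})\,\nabla_2^j\,,
\end{equation*}
where $\pi_k\colon M\times M\to M$ are the projections and $\overline{q_j}$ is the conjugate bundle map. Since the projections $T^*(M\times M)\to \pi_k^*(T^*M)$ are covariantly constant for the product Levi-Civita connection, each $\nabla_k^r$ is (up to a covariantly constant projection) a power of $\nabla$ on $M\times M$, and therefore a $\nabla$-differential operator there. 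Pullbacks by $\pi_k$ preserve smoothness and $\CIb$-bounds because derivatives in the other factor vanish. Hence $\tilde P_1,\tilde Q_2$ and their composition $\tilde Q_2\tilde P_1$, of order $\le 2m$, are $\nabla$-differential operators on $M\times M$ with smooth (resp.\ $\CIb$) coefficients, by Proposition~\ref{prop.diff.algebra}. A short computation gives
\begin{equation*}
    \tilde Q_2\tilde P_1(u\otimes\overline w)\seq (Pu)\otimes\overline{Qw}\in\CIc\big(M\times M;\,G\boxtimes\overline G\big)\,.
\end{equation*}

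Finally, I would absorb the Hermitian pairing $\tr_G\colon G\otimes\overline G\to\CC$ into a zeroth-order bundle map. Choose a smooth bundle map $\iota\colon G\boxtimes\overline G\to\CC$ on $M\times M$ whose restriction to the diagonal $\Delta\cong M$ coincides with $\tr_G$, and set $\tilde P\ede\iota\circ\tilde Q_2\tilde P_1$. Then
\begin{equation*}
    \tilde P(u\otimes\overline w)\big|_M \seq \iota\big((Pu)\otimes\overline{Qw}\big)\big|_M \seq (Pu,Qw)_G \seq \mfkb(u\otimes\overline w)\,,
\end{equation*}
so $\mfkb$ is a $\nabla$-bidifferential operator of order $\le 2m$ with smooth coefficients. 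Existence of such a smooth $\iota$ is standard: the diagonal is a closed submanifold of $M\times M$, so one extends the section using a tubular neighborhood and a partition of unity.

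The main obstacle is the $\CIb$ case, where $\iota$ itself must be $\CIb$ on $M\times M$. I would handle this by embedding $G$ isometrically into a trivial Hermitian bundle $M\times\CC^K$ via the polar-decomposition trick of Lemma~3.1 in \cite{GN17}, and then taking $\iota$ to be the restriction of the constant standard Hermitian pairing on $\CC^K\otimes\overline{\CC^K}$; the $\CIb$-uniformity of the embedding then yields the $\CIb$-bounds on $\iota$, completing the argument.
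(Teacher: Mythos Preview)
Your approach is considerably more detailed than the paper's, which dispatches the lemma in a single sentence: ``We can assume $P = a\nabla^i$ and $Q = b\nabla^j$, by linearity and by the definition of $\nabla$-differential operators.'' The intended continuation is that $(a\nabla^i u, b\nabla^j w)_G = (b^*a\,\nabla^i u, \nabla^j w)_{T^{*\otimes j}M\otimes F}$ is already in the canonical form of Remark~\ref{rem.can.form} with $\mfka_{ij} = b^*a$; the converse passage from canonical form back to an operator on $M\times M$ is taken as understood. Your explicit construction $\tilde P=\iota\circ\tilde Q_2\tilde P_1$ fills in precisely that converse, and in the smooth case your tubular-neighborhood extension of $\iota$ is correct.

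In the $\CIb$ case, however, your proposed fix does not go through as stated. Lemma~3.1 of \cite{GN17}, as invoked in the paragraph after Definition~\ref{def.FFC}, constructs a $\CIb$ embedding of the \emph{tangent bundle} of a manifold with \emph{bounded geometry} into a trivial bundle; it says nothing about an arbitrary Hermitian bundle $G$ on an arbitrary $M$, and the present lemma assumes neither (FFC) nor bounded geometry. Without some such hypothesis there is no reason the Hermitian pairing $\tr_G$ should admit a $\CIb$ extension $\iota$ off the diagonal. Note that the paper's own one-line proof does not address this point either: the monomial reduction only shows that the diagonal coefficients $\mfka_{ij}=b^*a$ lie in $\CIb$, which is exactly the reading of ``$\CIb$-coefficients'' adopted in the unnumbered remark following Remark~\ref{rem.vector.space}, rather than the a~priori stronger requirement in Definition~\ref{def.bidifferential} that you are trying to verify.
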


\begin{proof} We can assume $P = a\nabla^i$ and $Q = b \nabla^j$, by
  linearity and by the definition of $\nabla$-differential operators.
\end{proof}

Recall that $\dvol$ denotes the volume form on $M$ associated to the
metric.

\begin{definition}\label{def.can.form}
The sesquilinear form
\begin{equation*}
    B_\mfkb^{\nabla}(u, w) \ede \int_M \mfkb^{\nabla} (u \otimes
    \overline{w}) \dvol\
\end{equation*}
is called the {\em Dirichlet form} associated to $\mfkb^{\nabla}$. It
has $\CIb$-coefficients if $\mfkb^{\nabla}$ has and it has the same
order as $\mfkb^{\nabla}$. The induced map $P_\mfkb^\nabla : \CIc(M;
E) \to \CIc(M; F)^*$,
\begin{equation*}
    \langle P_\mfkb^\nabla u, \overline{w} \rangle \seq
    B_\mfkb^{\nabla}(u, w)\,, \quad u \in \CIc(M; E), w \in \CIc(M;
    F)\,,
\end{equation*}
is called {\em the $\nabla$-differential operator in divergence form}
associated to $\mfkb^\nabla$ (or to $B_\mfkb^\nabla$). If
$\mfka_{ij} = 0$ if $i > m$ or $j > m$, then we shall say that $
P_\mfkb^\nabla$ has order $\le 2m$.
\end{definition}

We shall see in the last part of this section that $P_\mfkb^\nabla$ is indeed a
$\nabla$-differential operator. We shall continue to use the notation
of Remark \ref{rem.can.form}. In particular, $\mfkb^{\nabla}$ will be
a $\nabla$-bidifferential operator.

\begin{lemma}\label{lemma.B.continuous}
Let $m \in \NN$ be such that $\mfka_{ij} \in \CIb(M; \Hom(T^{*\otimes
  i}M \otimes E; T^{*\otimes j}M \otimes F))$, for all $i$ and $j$ and
$a_{ij} = 0$ if $i > m$ or $j > m$. Then $B_\mfkb^{\nabla}$ extends to
a continuous, {\em sesquilinear} map
\begin{equation*}
    B_\mfkb^{\nabla} : H_\nabla^m(M; E) \times H_{\nabla}^m(M; F) \to
    \CC.
\end{equation*}
Similarly, $P_\mfkb^\nabla$ extends to a continuous map
\begin{equation*}
    P_\mfkb^{\nabla} : H_\nabla^m(M; E) \to H_{\nabla}^{m}(M; F)^*.
\end{equation*}
\end{lemma}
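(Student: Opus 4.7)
The plan is to reduce both continuity statements to a pointwise Cauchy--Schwarz bound combined with the multiplication property (Corollary \ref{prop-Sobolev}) of $\nabla$-Sobolev spaces, applied to each term in the canonical decomposition of $\mfkb^{\nabla}$ from Remark \ref{rem.can.form}.

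First I would fix $u \in \CIc(M; E)$ and $w \in \CIc(M; F)$ and start from the canonical form
\begin{equation*}
  B_{\mfkb}^{\nabla}(u, w) \seq \sum_{i,j \le m} \int_M \big(\mfka_{ij}(x)\nabla^i u(x), \nabla^j w(x)\big)_{T^{*\otimes j}M \otimes F} \dvol(x)\,,
\end{equation*}
where the sum is finite because $\mfka_{ij} = 0$ whenever $i > m$ or $j > m$. For each fixed pair $(i,j)$, the pointwise Cauchy--Schwarz inequality in the Hermitian bundle $T^{*\otimes j}M \otimes F$ bounds the integrand by $\|\mfka_{ij}(x)\nabla^i u(x)\|\,\|\nabla^j w(x)\|$. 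Applying Corollary \ref{prop-Sobolev} with $\ell = 0$, $q = 2$ (so the constant is $1$) to the section $\mfka_{ij} \in W^{0,\infty}_\nabla \bigl(M;\Hom(T^{*\otimes i}M\otimes E; T^{*\otimes j}M\otimes F)\bigr)$ gives $\|\mfka_{ij}\nabla^i u\|_{L^2(M)} \le \|\mfka_{ij}\|_{L^\infty}\|\nabla^i u\|_{L^2(M)}$.

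Next I would integrate using the Cauchy--Schwarz inequality in $L^2(M)$, obtaining
\begin{equation*}
  |B_\mfkb^\nabla(u,w)| \,\le\, \sum_{i,j\le m} \|\mfka_{ij}\|_{L^\infty(M)}\,\|\nabla^i u\|_{L^2(M)}\,\|\nabla^j w\|_{L^2(M)}\,.
\end{equation*}
Bounding each factor $\|\nabla^i u\|_{L^2}$ by $\|u\|_{H^m_\nabla(M;E)}$ and similarly for $w$, and setting $C_\mfkb \ede \sum_{i,j\le m}\|\mfka_{ij}\|_{L^\infty}$ (which is finite since all $\mfka_{ij}\in\CIb$), I get
\begin{equation*}
  |B_\mfkb^\nabla(u,w)| \,\le\, C_\mfkb\,\|u\|_{H^m_\nabla(M;E)}\,\|w\|_{H^m_\nabla(M;F)}\,.
\end{equation*}
Since $\CIc(M;E)$ and $\CIc(M;F)$ are dense in the respective Sobolev spaces (or at least in $H^m_{0,\nabla}$, which is what is needed in the definition of the negative-order dual), this bound immediately yields the claimed continuous sesquilinear extension $B_\mfkb^\nabla : H_\nabla^m(M;E)\times H_\nabla^m(M;F) \to \CC$.

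Finally, for the operator $P_\mfkb^\nabla$, I would observe that the identity $\langle P_\mfkb^\nabla u, \overline w\rangle = B_\mfkb^\nabla(u,w)$ together with the inequality just proved shows that for each fixed $u \in H^m_\nabla(M;E)$ the map $w \mapsto B_\mfkb^\nabla(u,w)$ is a continuous antilinear functional on $H^m_\nabla(M;F)$ of norm at most $C_\mfkb \|u\|_{H^m_\nabla(M;E)}$; this is exactly the statement that $P_\mfkb^\nabla$ extends to a bounded operator $H^m_\nabla(M;E) \to H^m_\nabla(M;F)^*$ with operator norm $\le C_\mfkb$. There is no real obstacle here: the only point requiring any care is keeping track that the canonical form from Remark \ref{rem.can.form} really gives a finite sum with $\CIb$-coefficients (which is where the hypothesis $\mfka_{ij}=0$ for $i>m$ or $j>m$ is essential, since otherwise one could not control $\|\nabla^i u\|_{L^2}$ by the $H^m$-norm).
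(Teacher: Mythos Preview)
The paper states this lemma without proof, so there is no approach to compare against. Your argument is correct in substance: the canonical form from Remark~\ref{rem.can.form} plus Cauchy--Schwarz (pointwise in the Hermitian bundle and then in $L^2(M)$) yields exactly the bound $|B_\mfkb^\nabla(u,w)| \le C_\mfkb\,\|u\|_{H^m_\nabla}\,\|w\|_{H^m_\nabla}$, and the second statement is then just the reformulation of a bounded sesquilinear form as a bounded linear map into the antidual.

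One small point deserves care. Your density step is both unnecessary and, as stated, slightly shaky: $\CIc(M;E)$ is in general \emph{not} dense in $H^m_\nabla(M;E)$ (it is only dense in $H^m_{0,\nabla}$, and the lemma concerns the full space $H^m_\nabla$). Fortunately you do not need density at all. For any $u \in H^m_\nabla(M;E)$ and $w \in H^m_\nabla(M;F)$ one has $\nabla^i u \in L^2$ and $\nabla^j w \in L^2$ for $i,j \le m$ directly from Definition~\ref{def.Sobolev}, so each integrand $(\mfka_{ij}\nabla^i u, \nabla^j w)$ lies in $L^1(M)$ and your estimate holds verbatim on the whole of $H^m_\nabla \times H^m_\nabla$. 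Thus the canonical-form integral itself furnishes the continuous extension, with no appeal to approximation.
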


\begin{remark}\label{rem.vector.space}
It is clear that all of the following {sets} are vector spaces:
\begin{enumerate}
  \item The set $\BiDiff_{\nabla}^{2m}(M; E, F)$ of
    $\nabla$-bidifferential operators $\mfkb^\nabla : \CIc(M \times M;
    E \boxtimes \overline{F}) \to \CIc(M)$ of order $\le 2m$ (with
    smooth coefficients).

  \item The set $\BiDiff_{b, \nabla}^{2m}(M; E, F)$ of
    $\nabla$-bidifferential operators $\mfkb^\nabla \in
    \BiDiff^{\mu}(M; E, F)$ with $\CIb$-coefficients.

  \item The set of Dirichlet forms $B_\mfkb^{\nabla}$ (with smooth
    coefficients) associated to $\mfkb^\nabla \in \BiDiff^{2m}(M; E,
    F)$.

  \item The set of Dirichlet forms $B_\mfkb^{\nabla}$ associated to
    $\mfkb^\nabla \in \BiDiff_b^{\mu}(M; E, F)$ (thus with
    $\CIb$-coefficients).

  \item The set of order $\le 2m$ differential operators
    $P_\mfkb^{\nabla}$ in divergence form (with smooth coefficients)
    associated to $\mfkb^\nabla \in \BiDiff^{\mu}(M; E, F)$.

  \item The set of order $\le 2m$ differential operators
    $P_\mfkb^{\nabla}$ in divergence form associated to $\mfkb^\nabla
    \in \BiDiff_b^{\mu}(M; E, F)$ (thus with $\CIb$-coefficients).
\end{enumerate}
\end{remark}

The coefficients $\mfka_{ij}$ in the canonical form for $\mfkb^\nabla$
(see Remark \ref{rem.splitting}) are not unique (except for $i + j \le
1$). So when we say that one of the above objects has
$\CIb$-coefficients, we mean that we can choose the coefficients
$\mfka_{ij}$ to be in $\CIb$.

\begin{proposition}\label{prop.mixed.bidiff}
A linear map $\mfkb^{mix} : \CIc(M \times M ; E \boxtimes
\overline{F}) \to \CIc(M)$ is a {\em $\nabla$-bidifferential
  operator of order $\le 2m$} if, and only if, it is a linear
combination of maps of the form $u \otimes \overline{w} \to (Pu,
Qw)_G$, where $P \in \Diff^m(M; E, G)$ and $Q \Diff^m(M; F, G)$.
If $\mfkb$ has $\CIb$-coefficients, then we can choose
$P$ and $Q$ to also have $\CIb$-coefficients.
\end{proposition}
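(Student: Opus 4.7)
The ``if'' direction is already given by Lemma \ref{lemma.generation.bidiff}, so the plan is to focus on the ``only if'' direction. The main tool is the canonical form for $\nabla$-bidifferential operators recorded in Remark \ref{rem.can.form}: any $\mfkb^\nabla$ of order $\le 2m$ can be written as
\begin{equation*}
    \mfkb^{\nabla}(u \otimes \overline{w})(x) \seq \sum_{i, j \le m} \bigl( \mfka_{ij}(x) \nabla^i u(x),\, \nabla^j w(x) \bigr)_{T^{*\otimes j} M \otimes F}\,,
\end{equation*}
with $\mfka_{ij} \in \CI\bigl(M; \Hom(T^{*\otimes i} M \otimes E;\, T^{*\otimes j} M \otimes F)\bigr)$, and with $\mfka_{ij} \in \CIb$ whenever $\mfkb^{\nabla}$ has $\CIb$-coefficients.

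With this reduction in hand, I would prove the desired decomposition term by term. For each pair $(i,j)$ with $i, j \le m$, set
\begin{equation*}
    G_{ij} \ede T^{*\otimes j} M \otimes F\,, \qquad P_{ij} \ede \mfka_{ij} \nabla^i\,, \qquad Q_{ij} \ede \nabla^j\,.
\end{equation*}
Then $P_{ij} \in \Diff^i(M; E, G_{ij}) \subset \Diff^m(M; E, G_{ij})$ and $Q_{ij} \in \Diff^j(M; F, G_{ij}) \subset \Diff^m(M; F, G_{ij})$ directly from Definition \ref{def.diff.op}, and by construction
\begin{equation*}
    \mfkb^{\nabla}(u \otimes \overline{w}) \seq \sum_{i,j \le m} \bigl( P_{ij} u,\, Q_{ij} w \bigr)_{G_{ij}}\,,
\end{equation*}
which is the desired presentation as a linear combination of the stated elementary maps (each summand uses its own auxiliary bundle $G_{ij}$, which is allowed by the statement).

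For the $\CIb$-refinement, the only coefficient to worry about is $\mfka_{ij}$ itself, since $Q_{ij} = \nabla^j$ has the identity as leading (and only) coefficient, hence trivially $\CIb$. If $\mfkb^{\nabla}$ has $\CIb$-coefficients we may, by Remark \ref{rem.can.form}, pick $\mfka_{ij} \in \CIb$, so $P_{ij} = \mfka_{ij} \nabla^i \in \Diff_b^m(M; E, G_{ij})$. The only subtle point in the plan is verifying that the identification between the canonical-form data $\tilde \mfka_{ij}$ of Remark \ref{rem.can.form} and the $\Hom$-valued coefficients $\mfka_{ij}$ transports the $\CIb$ property; this is handled by the linear, pointwise, metric-based identification described there, which preserves $\CIb$ since the Hermitian metric on $F$ is a smooth bundle map with bounded covariant derivatives (the metric is $\nabla$-parallel). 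Thus no serious obstacle arises, and the proof collapses to assembling the canonical form with the elementary observation that $\mfka_{ij} \nabla^i$ is a $\nabla$-differential operator of order $\le m$.
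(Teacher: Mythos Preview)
Your proposal is correct and follows the same approach the paper intends: the paper's own proof is simply ``This follows from definitions,'' and what you have written is precisely the unpacking of that sentence via Lemma~\ref{lemma.generation.bidiff} for one direction and the canonical form of Remark~\ref{rem.can.form} for the other.
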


\begin{proof}
This follows from definitions.
\end{proof}

\begin{proposition}\label{prop.ind.A2}
Let $A \in \CI(M; \End(E))$, $A^* = -A$, and $\widetilde \nabla =
\nabla + A$. Let also $p \in [1, \infty]$ and $\mu \in \NN$. We then
have the following:
\begin{enumerate}[(i)]

\item $\BiDiff_{\nabla}^\mu(M; E, F) = \BiDiff_{\widetilde
  \nabla}^\mu(M; E, F)$.

\item If $A \in \CIb(M; \End(E))$, then $\BiDiff_{b, \nabla}^\mu(M; E,
  F) = \BiDiff_{b, \widetilde \nabla}^\mu(M; E, F)$.
\end{enumerate}
\end{proposition}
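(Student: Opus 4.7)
The plan is to deduce both parts from the analogous statements for $\nabla$-differential operators---Proposition \ref{prop.ind.A} together with the remark preceding it---by invoking the structural characterization of Proposition \ref{prop.mixed.bidiff}, which reduces a statement about $\nabla$-bidifferential operators on $E \boxtimes \overline{F}$ to a statement about pairs of $\nabla$-differential operators on $M$.

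For part (i), given any $\mfkb \in \BiDiff_{\widetilde\nabla}^\mu(M; E, F)$, I would apply Proposition \ref{prop.mixed.bidiff} (with the connection $\widetilde\nabla$ on $E$) to write $\mfkb$ as a finite linear combination of operators of the form $(u \otimes \overline{w}) \mapsto (P_k u, Q_k w)_{G_k}$, where $P_k \in \Diff_{\widetilde\nabla}^{m_k}(M; E, G_k)$ and $Q_k \in \Diff^{m_k}(M; F, G_k)$, with $m_k \le m$ and $G_k$ auxiliary Hermitian bundles. Since $\widetilde\nabla - \nabla = A \in \CI(M; \End(E))$, the remark preceding Proposition \ref{prop.ind.A}, itself a consequence of the composition property of Proposition \ref{prop.diff.algebra}\eqref{item.comp.prop}, gives
\begin{equation*}
\Diff_{\widetilde\nabla}^{m_k}(M; E, G_k) \seq \Diff_\nabla^{m_k}(M; E, G_k)\,.
\end{equation*}
Hence each $P_k$ is also a $\nabla$-differential operator, and Proposition \ref{prop.mixed.bidiff} read in the reverse direction yields $\mfkb \in \BiDiff_\nabla^\mu(M; E, F)$. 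The opposite inclusion follows by symmetry, since $\nabla = \widetilde\nabla + (-A)$ with $-A \in \CI(M; \End(E))$.

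For part (ii), the same strategy works, with $\CIb$-coefficients inserted throughout. Under the hypothesis $A \in \CIb(M; \End(E))$, Proposition \ref{prop.ind.A}(i) gives
\begin{equation*}
\Diff_{b, \widetilde\nabla}^{m_k}(M; E, G_k) \seq \Diff_{b, \nabla}^{m_k}(M; E, G_k)\,.
\end{equation*}
Since Proposition \ref{prop.mixed.bidiff} also asserts that a bidifferential operator with $\CIb$-coefficients can be written as a linear combination of terms $(P_k u, Q_k w)_{G_k}$ in which $P_k$ and $Q_k$ themselves have $\CIb$-coefficients, the identical chain of equalities proves the desired identity of bounded bidifferential spaces.

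I do not anticipate a serious obstacle; the only minor point to verify is that reconstituting $\mfkb$ from the sum $\sum_k (P_k \cdot, Q_k \cdot)_{G_k}$ lands in the correct bidifferential class, but this is built into Proposition \ref{prop.mixed.bidiff}. An alternative route, if one wishes to avoid that reduction, is to run Proposition \ref{prop.ind.A} directly on $M \times M$ by interpreting $\widetilde\nabla - \nabla$ as the perturbation $\pi_1^*(A) \otimes 1$ of the induced connection on $E \boxtimes \overline{F}$ and then restricting to the diagonal; this bypasses Proposition \ref{prop.mixed.bidiff} but offers no real simplification.
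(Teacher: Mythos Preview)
Your argument is correct. The paper's own proof is a one-liner---``This follows with arguments similar to those for Proposition \ref{prop.inclusion.mixed1}''---so there is little to compare; your explicit reduction via Proposition \ref{prop.mixed.bidiff} to the differential-operator statement (Proposition \ref{prop.ind.A} and the remark preceding it) is a clean and legitimate way to supply the details, and the alternative you sketch on $M \times M$ with the perturbation $\pi_1^*(A) \otimes 1$ is equally valid.
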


\begin{proof}
This follows with arguments similar to those for Proposition \ref{prop.inclusion.mixed1}.
\end{proof}

Let us now introduce {\em mixed $\nabla$-bidifferential operators}.
Recall that $\Diff^m(M; E, G)$ denotes the set of mixed differential
operators $\CIc(M; F) \to \CIc(M; G)$.

\begin{definition}\label{def.mixed.bidiff}
A linear map $\mfkb^{mix} : \CIc(M \times M ; E \boxtimes
\overline{F}) \to \CIc(M)$ is a {\em mixed bidifferential
operator of order $\le 2m$} if
it is a linear combination of maps of the form $u \otimes \overline{w} \to (Pu,
Qw)_G$, where $P \in \widetilde \Diff^m(M; E, G)$ and $Q \in
\widetilde \Diff^m(M; F, G)$.  If $P$ and $Q$ have
$\CIb$-coefficients, then we shall say that $\mfkb$ will also have
$\CIb$-coefficients.
\end{definition}

Let us assume now (FFC) and derive then the equality of the space of
$\nabla$ bidifferential operators and that of mixed bidifferential
operators, in analogy with the corresponding result for differential
operators.

\begin{proposition}
\label{prop.equality.mixed2}
Let $m \in \NN$ and assume $M$ satisfies (FFC).
\begin{enumerate}[(i)]
\item We have $\widetilde \BiDiff^{2m}(M; E, F) = \BiDiff^{2m}(M; E,
  F)$ and, similarly, $\widetilde \BiDiff_b^{2m}(M; E, F) =
  \BiDiff_b^{2m}(M; E, F)$.

\item Let $Z_1, Z_2, \ldots, Z_N \in \maW_b(M)$ be Fr\'echet systems
of generators for $\maW_b(M)$ of Remark \ref{rem.generation2}, then
$\widetilde \Diff^{2m}(M; E, F)$ is linearly generated by
sesqui-linear maps of the form
\begin{equation*}
u \otimes \overline{w} \to (a \nabla_{X_1}\nabla_{X_2} \ldots
\nabla_{X_r}, b \nabla_{X_{r+1}}\nabla_{X_{r+2}} \ldots
\nabla_{X_{r+s}})_G\,,
\end{equation*}
where $r \le m$, $X_1, X_2, \ldots, X_{r+s} \in \{Z_1, Z_2, \ldots,
Z_N\}$, $a \in \CI(M; \Hom(E, G))$, $b \in \CI(M; \Hom(F, G))$, and
$G$ is an auxiliary vector bundle.

\item The analogous result holds for $\widetilde \BiDiff_b^{2m}(M; E,
  F)$ with $a \in \CIb(M; \Hom(E, G))$ and $b \in \CIb(M; \Hom(F,
  G))$.
\end{enumerate}
\end{proposition}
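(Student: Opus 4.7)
The approach is to reduce to the analogous statements already proven for $\nabla$- and mixed differential operators in Propositions \ref{prop.inclusion.mixed1} and \ref{prop.inclusion.mixed2}, and then to use Proposition \ref{prop.mixed.bidiff} to pass from linear to bilinear operators.

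For part (i), I would start from the characterization in Proposition \ref{prop.mixed.bidiff}, which expresses an arbitrary $\mfkb \in \BiDiff^{2m}(M; E, F)$ as a finite linear combination of maps $u \otimes \overline{w} \mapsto (Pu, Qw)_G$ with $P \in \Diff^m(M; E, G)$ and $Q \in \Diff^m(M; F, G)$ for auxiliary Hermitian vector bundles $G$. Definition \ref{def.mixed.bidiff} gives the analogous description for $\widetilde \BiDiff^{2m}(M; E, F)$ but with $P, Q$ in the mixed classes $\widetilde \Diff^m$. Since $\widetilde \Diff^m = \Diff^m$ by Proposition \ref{prop.inclusion.mixed1}(ii), the two sets of spanning sesquilinear maps coincide, hence $\widetilde \BiDiff^{2m} = \BiDiff^{2m}$. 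The bounded analog follows in the same manner, using the bounded-coefficient statement of Proposition \ref{prop.mixed.bidiff} together with the equality $\widetilde \Diff_b^m = \Diff_b^m$ from Proposition \ref{prop.inclusion.mixed2}(ii), for which (FFC) is essential.

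For part (ii), I would combine part (i) with the explicit generation result for $\Diff^m$. By part (i), any element of $\widetilde \BiDiff^{2m}(M; E, F)$ is a sum of terms $(Pu, Qw)_G$ as above. Proposition \ref{prop.inclusion.mixed1}(iii) then lets us further expand each such $P$ as a finite $\CI(M; \Hom(E, G))$-linear combination of $\nabla_{X_1} \cdots \nabla_{X_r}$ with $r \le m$ and $X_i \in \{Z_1, \ldots, Z_N\}$, and similarly for $Q$, with indices now labelled $X_{r+1}, \ldots, X_{r+s}$, $s \le m$, and coefficients in $\CI(M; \Hom(F, G))$. Substituting these expansions into $(Pu, Qw)_G$ and using sesquilinearity of the inner product produces exactly the generating sesquilinear expressions listed in the statement, with the prescribed ranges for $r$ and $s$.

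Part (iii) is obtained by the same argument in the bounded setting: one replaces Proposition \ref{prop.inclusion.mixed1}(iii) by Proposition \ref{prop.inclusion.mixed2}(iii), which under (FFC) yields the generation of $\Diff_b^m(M; E, G)$ by the same monomials $\nabla_{X_1} \cdots \nabla_{X_r}$ but with $\CIb$ coefficients; the ``bounded'' half of Proposition \ref{prop.mixed.bidiff} ensures that the outer coefficients in the bidifferential expression can likewise be chosen in $\CIb$. I do not foresee any serious obstacle in this proof, as everything amounts to assembly of earlier results. The only point requiring mild care is to keep track of which side of $(\,\cdot\,, \,\cdot\,)_G$ the coefficients attach to, so that $a \in \CI(M; \Hom(E, G))$ is paired with $u$ and $b \in \CI(M; \Hom(F, G))$ with $w$; this bookkeeping is dictated directly by Proposition \ref{prop.mixed.bidiff}.
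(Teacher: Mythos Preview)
Your proposal is correct and follows essentially the same route as the paper: reduce the bidifferential statements to the already-proven differential ones (Propositions \ref{prop.inclusion.mixed1} and \ref{prop.inclusion.mixed2}) via the characterization of bidifferential operators as linear combinations of $(Pu,Qw)_G$. The only cosmetic difference is that the paper cites Lemma \ref{lemma.generation.bidiff} rather than Proposition \ref{prop.mixed.bidiff}, but the latter is just the two-sided packaging of the former together with the canonical form of Remark \ref{rem.can.form}, so the arguments are the same in substance.
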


\begin{proof}
This follows from Proposition \ref{prop.inclusion.mixed1}
and Lemma \ref{lemma.generation.bidiff}. 
\end{proof}

\begin{proposition}\label{prop.order2}
Assume $M$ satisfies the (FFC) condition and let $(Z_j)$, $1 \le j \le
N$, be a Fr\'echet generating system for $\maW_b(M)$.
\begin{enumerate}[(i)]
\item $\BiDiff^{2m}(M; E, F)$ is linearly generated by sesquilinear
  maps of the form $u \otimes \overline{w} \to \big(a \nabla^E_{Z_{k_1}}
  \ldots \nabla^E_{Z_{k_r}} u, b \nabla^E_{Z_{j_1}} \ldots
  \nabla^E_{Z_{j_s}}\big)_G$, where $1 \le k_1 \le k_2 \le \ldots \le k_r
  \le N$, $1 \le j_1 \le j_2 \le \ldots \le j_s \le N$, $r, s \le m$,
  and $a \in \CI(M; \Hom(E; G))$ and $b \in \CI(M; \Hom(F;
  G))$.\smallskip

  \hspace*{-1.3cm} Let us assume also that $E \to M$ and $F \to M$
  have totally bounded curvature. Then
\smallskip

\item $\BiDiff_b^{2m}(M; E, F)$ is linearly generated by sesquilinear
  maps $\mfkb$ of the form $\mfkb(u \otimes \overline{w}) = \big(a
  \nabla^E_{Z_{k_1}} \ldots \nabla^E_{Z_{k_r}} u, b \nabla^E_{Z_{j_1}}
  \ldots \nabla^E_{Z_{j_s}}\big)_G$, where $1 \le k_1 \le k_2 \le \ldots
  \le k_r \le N$, $1 \le j_1 \le j_2 \le \ldots \le j_s\le N$, $r, s
  \le m$, and $a \in \CIb(M; \Hom(E; G))$ and $b \in \CIb(M; \Hom(F;
  G))$.
\end{enumerate}
\end{proposition}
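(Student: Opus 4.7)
The plan is to deduce this from Proposition \ref{prop.equality.mixed2} combined with the reordering arguments already used in Corollaries \ref{cor.order1} and \ref{cor.order2}. Indeed, Proposition \ref{prop.equality.mixed2} already writes every element of $\BiDiff^{2m}(M; E, F)$ as a linear combination of sesquilinear maps
\begin{equation*}
  u \otimes \overline{w} \, \mapsto \, \big(a \nabla_{X_1}\nabla_{X_2} \ldots \nabla_{X_r} u,\; b \nabla_{X_{r+1}}\ldots\nabla_{X_{r+s}} w \big)_G\,,
\end{equation*}
with $X_j \in \{Z_1, \ldots, Z_N\}$, $r, s \le m$, and $a, b$ smooth (respectively $\CIb$ in part (ii)). So only the ordering of the indices $k_1 \le \ldots \le k_r$ and $j_1 \le \ldots \le j_s$ remains to be arranged.

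For part (i), I would proceed by induction on $r+s$, treating each factor independently. The left-hand factor $P := a\nabla_{Z_{i_1}}\cdots\nabla_{Z_{i_r}} \in \Diff^r(M; E, G)$ can be rewritten as a sum of operators $a' \nabla_{Z_{k_1}}\cdots\nabla_{Z_{k_r}}$ with $k_1 \le \ldots \le k_r$, up to operators of order $\le r-1$, exactly as in the proof of Corollary \ref{cor.order1}. Concretely, any transposition of neighboring factors $\nabla_{Z_i}\nabla_{Z_j}$ produces the remainder $\nabla_{Z_j}\nabla_{Z_i} + \nabla_{[Z_i,Z_j]} + R^E(Z_i, Z_j)$; using $[Z_i, Z_j] = \sum_k L_{ij}^k Z_k$ with $L_{ij}^k \in \CI(M)$ (Lemma \ref{lemma.just.smooth}) and the smoothness of the curvature, the remainder is a $\nabla$-differential operator of order $\le r-1$ with smooth coefficients. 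The same procedure applies to the right-hand factor $Q$. By Lemma \ref{lemma.generation.bidiff}, each resulting term $(P' u, Q' w)_G$ is a $\nabla$-bidifferential operator, and the induction closes.

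For part (ii), the same argument applies, but the remainder terms must have $\CIb$-coefficients. This is where the two extra hypotheses enter: the assumption that $E$ and $F$ have totally bounded curvature ensures $R^E(Z_i, Z_j), R^F(Z_i, Z_j) \in \CIb$ (by Corollary \ref{cor.order2}(i) applied to $E$ and to $F$), while Lemma \ref{lemma.FFC+div} under (FFC) gives $L_{ij}^k \in \CIb(M)$. Hence the commutator $[\nabla_{Z_i}^E, \nabla_{Z_j}^E] - \nabla_{[Z_i,Z_j]}^E$ stays in $\Diff_b^0(M; E)$ and $\nabla_{[Z_i,Z_j]}^E$ is a $\CIb(M)$-linear combination of first-order operators $\nabla_{Z_k}^E$. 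The same is true for $F$, so each reordering step preserves the class $\BiDiff_b^{2m}$, and the induction on $r+s$ goes through as before.

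The only mildly delicate point, which I expect to be the main thing to verify carefully, is that when a commutator or curvature term is produced on one side of the inner product, the resulting expression is still of the sesquilinear form prescribed by Definition \ref{def.bidifferential}, i.e.\ that the extra factor of $R^E(Z_i, Z_j) \in \End(E)$ or $L_{ij}^k$ can be absorbed into the left (respectively right) coefficient $a$ or $b$ without coupling the two factors. Since these remainders act only on the $E$-variable (respectively $F$-variable), this absorption is immediate: $\big(c \cdot u, Q w\big)_G$ is again of the required form with new coefficient $ac$ on the left and unchanged right factor, and symmetrically on the right. With this observation, both (i) and (ii) follow.
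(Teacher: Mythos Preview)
Your proposal is correct and follows essentially the same approach as the paper, which simply cites Corollary~\ref{cor.order2} and Lemma~\ref{lemma.generation.bidiff}. The only difference is that you re-derive the reordering step by an explicit induction on $r+s$ with commutators, whereas the paper's one-line argument applies Corollary~\ref{cor.order1} (respectively Corollary~\ref{cor.order2}) directly to each factor $P \in \Diff^m(M;E,G)$ and $Q \in \Diff^m(M;F,G)$ separately, obtaining ordered monomials for $P$ and $Q$ and then distributing over the inner product; this avoids repeating the commutator bookkeeping since that work was already done in those corollaries.
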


\begin{proof}
This follows from Corollary \ref{cor.order2} and Lemma
\ref{lemma.generation.bidiff}.
\end{proof}

We conclude with the following proposition.

\begin{proposition}\label{prop.divergence.form}
Assume that $M$ satisfies the (FFC) condition and let $m\in \mathbb N$ and $\mfkb \in
\BiDiff^{2m}(M; E, F)$. Then the restriction $P$ of $P_\mfkb^\nabla$,
\begin{equation*}
    P_\mfkb^{\nabla} : H_\nabla^m(M; E) \to H_{\nabla}^{-m}(M; F)\,,
\end{equation*}
obtained from the restriction $H_{\nabla}^{m}(M; F)^m \to
H_{\nabla}^{-m}(M; F)$, is a $\nabla$-differential operator of
order $\le 2m$, that is $P \in \Diff_b^{2m}(M; E, F)$.
\end{proposition}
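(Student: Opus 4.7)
The plan is to start from the canonical form of $\mfkb^\nabla$ provided in Remark \ref{rem.can.form}: for all $u \in \CIc(M;E)$ and $w \in \CIc(M;F)$,
\begin{equation*}
    \mfkb^\nabla(u\otimes\overline{w})(x) \seq \sum_{i+j \le 2m}\big(\mfka_{ij}(x)\nabla^i u(x),\nabla^j w(x)\big)_{T^{*\otimes j}M\otimes F},
\end{equation*}
with $\mfka_{ij} = 0$ whenever $i>m$ or $j>m$, and with $\mfka_{ij}\in\CIb$ under the hypothesis that $\mfkb$ has $\CIb$-coefficients. Integrating against $\dvol$ and using the definition of the Dirichlet form, we obtain on the level of $\CIc$-sections
\begin{equation*}
    \langle P_\mfkb^\nabla u, \overline{w}\rangle \seq \sum_{i+j\le 2m} \int_M \big(\mfka_{ij}\nabla^i u,\nabla^j w\big)_{T^{*\otimes j}M\otimes F}\dvol\,.
\end{equation*}

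The next step is to move the operator $\nabla^j$ from $w$ to the left factor by taking formal $L^2$-adjoints. By Proposition \ref{mapping-properties}, the adjoint $\nabla^*$ belongs to $\Diff_b^1$; iterating and using the algebra property of Proposition \ref{prop.diff.algebra}\eqref{item.pda.iv}, $(\nabla^j)^* \in \Diff_b^j$. Since $\mfka_{ij}$ is a $\CIb$--zero order operator (under the $\CIb$-coefficient assumption), the composition
\begin{equation*}
    Q_{ij} \ede (\nabla^j)^* \circ \mfka_{ij} \circ \nabla^i
\end{equation*}
is, by the composition property of Proposition \ref{prop.diff.algebra}\eqref{item.comp.prop} together with Proposition \ref{prop-Sobolev}, a $\nabla$-differential operator with $\CIb$-coefficients of order $\le i+j\le 2m$. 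Summing over $i,j$ yields an operator $P\ede \sum_{i+j\le 2m} Q_{ij}\in \Diff_b^{2m}(M;E,F)$, and by construction $\langle Pu,\overline{w}\rangle_{L^2} = \langle P_\mfkb^\nabla u,\overline{w}\rangle$ for all $u,w\in\CIc$.

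Finally, I would check that this formal identity is actually the claimed map $H^m_\nabla(M;E)\to H^{-m}_\nabla(M;F)$: Corollary \ref{cor.neg.order} applied to $P\in\Diff_b^{2m}$ gives a continuous extension $P: H^m_\nabla(M;E) \to H^{m-2m}_\nabla(M;F) = H^{-m}_\nabla(M;F)$ (here the (FFC) hypothesis enters through Proposition \ref{mapping-properties}, on which Corollary \ref{cor.neg.order} is based). On the other hand, Lemma \ref{lemma.B.continuous} gives a continuous extension $P_\mfkb^\nabla:H^m_\nabla(M;E)\to H^m_\nabla(M;F)^*$. Since both extensions agree on the dense subspace $\CIc(M;E)$ and land in $H^{-m}_\nabla(M;F)$ (using the identification of $W^{-m,2}_\nabla$ with the dual of $W^{m,2}_{0,\nabla}$ and the equality $W^{m,2}_{0,\nabla}=H^m_\nabla$ under our completeness/bounded-geometry-type hypotheses, via the results of Hebey recalled in Section 2), they coincide after restriction to $H^{-m}$.

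The main obstacle I expect is the bookkeeping around the adjoints and the coefficient bounds: the coefficients $\mfka_{ij}$ in the canonical form are not unique when $i+j\ge 2$ (cf.\ Remark \ref{rem.easy}), so one has to be careful that the $\CIb$-hypothesis is inherited by \emph{some} choice, and that the integration-by-parts rearrangement does not destroy this property. This is handled by Proposition \ref{prop.diff.algebra} (which guarantees that composing $\CIb$-coefficient operators produces a $\CIb$-coefficient operator) together with Proposition \ref{mapping-properties} (which supplies $\nabla^*\in\Diff_b^1$). A secondary issue is the identification of the dual $H^m_\nabla(M;F)^*$ with $H^{-m}_\nabla(M;F)$, which requires the density of $\CIc$ in $H^m_\nabla$; this is where the (FFC) assumption, implying bounded-geometry-type properties, is really used.
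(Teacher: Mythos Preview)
Your proposal is correct and follows essentially the same route as the paper's proof: reduce by linearity and the canonical form of Remark~\ref{rem.can.form} to a single term $(\mfka_{ij}\nabla^i u,\nabla^j w)$, then observe that the corresponding operator is $(\nabla^j)^*\mfka_{ij}\nabla^i\in\Diff_b^{2m}$ by Proposition~\ref{mapping-properties}(ii) together with the composition property of Proposition~\ref{prop.diff.algebra}. The paper's proof is just the one-line version of exactly this.

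One minor point: your closing concern about needing $W^{m,2}_{0,\nabla}=H^m_\nabla$ (and hence completeness or bounded-geometry hypotheses beyond (FFC)) is unnecessary. The ``restriction'' $H^m_\nabla(M;F)^*\to H^{-m}_\nabla(M;F)$ in the statement is simply the dual of the inclusion $W^{m,2}_{0,\nabla}(M;F)\hookrightarrow H^m_\nabla(M;F)$, which exists without any density assumption; nothing in the argument requires that this restriction be injective.
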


\begin{proof} By linearity, we can assume that $\mfkb(u \otimes
  \overline{w}) = (a \nabla^i u, \nabla^j w)_{T^{*\otimes j}M \otimes
    F}$, where $a \in \CIb(M; \Hom(T^{*\otimes i}M \otimes E;
  T^{*\otimes j}M \otimes F)$. Then $P = (\nabla^j)^* a \nabla^i \in
  \Diff_b^{2m}(M; E, F)$ by Proposition \ref{mapping-properties}(ii).
\end{proof}

\begin{corollary}
\label{divergence-form-weight}
Assume that $M$ satisfies (FFC) and let $m\in \mathbb N$ and $\mfkb \in \BiDiff^{2m}(M; E, F)$
with respect to the metric $g$ of $M$. Let $\rho , f_0: M \to (0, \infty)$
be admissible weights with respect to the metric $g_0=\rho ^{-2}g$.
Then the restriction $P$ of $P_\mfkb^\nabla$ to weighted Sobolev spaces,
\begin{equation*}
P_\mfkb^{\nabla} : f_0 H_{\nabla^{LC}, \rho}^m(M; E) \to \frac{1}{f_0}H_{\nabla^{LC}, \rho}^{-m}(M; F)\,,
\end{equation*}
obtained from the restriction $f_0H_{\nabla^{LC}, \rho}^{m}(M; F)^m \to
\frac{1}{f_0}H_{\nabla^{LC}, \rho }^{-m}(M; F)$, is a $\nabla $-differential operator of
order $\le 2m$, that is $P \in \Diff_b^{2m}(M; E, F)$.
\end{corollary}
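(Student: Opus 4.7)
The first assertion, that $P \in \Diff_b^{2m}(M;E,F)$, is exactly Proposition~\ref{prop.divergence.form} (applied to the data $g$, $\nabla^{LC}$), so I would invoke it directly and focus the proof on the mapping property on weighted spaces. By the duality definition \eqref{eq.def.neg.weight.Sob} with $p = p' = 2$ and the correspondence $\langle P_\mfkb^{\nabla} u, \overline{w}\rangle = B_\mfkb^{\nabla}(u,w)$, showing the continuity
\begin{equation*}
P_\mfkb^{\nabla}\colon f_0 H_{\nabla^{LC},\rho}^m(M;E) \to \tfrac{1}{f_0}H_{\nabla^{LC},\rho}^{-m}(M;F)
\end{equation*}
is equivalent to showing that the Dirichlet form $B_\mfkb^{\nabla}(u,w) = \int_M \mfkb^{\nabla}(u\otimes\overline{w})\dvol$ extends to a continuous sesquilinear form on $f_0 H^m_{\nabla^{LC},\rho}(M;E) \times f_0 H^m_{0,\nabla^{LC},\rho}(M;F)$.

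To prove this continuity, I would first write $\mfkb^{\nabla}$ in its canonical form (Remark \ref{rem.can.form}),
\begin{equation*}
\mfkb^{\nabla}(u\otimes\overline{w})(x) = \sum_{i+j\le 2m}\bigl(\mfka_{ij}(x)\nabla^i u(x),\,\nabla^j w(x)\bigr)_{T^{*\otimes j}M\otimes F},
\end{equation*}
and note that the admissibility of $\rho$ together with \eqref{LC-0} allows the coefficients $\mfka_{ij}$ to be taken in $\CIb$ with respect to $g_0$. I would then transport the problem to the $g_0$-picture via the identification \eqref{weight-classic-1}, namely $f_0 H^m_{\nabla^{LC},\rho}(M;E) = f_0\rho^{-n/2}H^m(M,g_0;E)$, together with the analogous identification for $F$. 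Under this identification, and using $\dvol_g = \rho^{n}\dvol_{g_0}$, the weights $f_0$, $f_0$ and the factor $\rho^{n}$ assemble so as to cancel the two factors $\rho^{-n/2}$ coming from $u$ and $w$; what remains is precisely the Dirichlet form of a $\nabla_0$-bidifferential operator of order $\le 2m$ with $\CIb(M,g_0)$ coefficients, paired on $H^m(M,g_0;E)\times H^m(M,g_0;F)$.

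Continuity of this transported form is then standard: expand $B_\mfkb^{\nabla}$ termwise, apply Proposition~\ref{prop-Sobolev-weight} (which, via \eqref{weight-classic-1}, is nothing but the classical multiplication property for $W^{\ell,p}(M,g_0)$) to each summand $\int_M(\mfka_{ij}\nabla^i u,\nabla^j w)\dvol$, using Corollary~\ref{diff-oper-weight} to control $\nabla^i u$ and $\nabla^j w$ in the appropriate (unweighted) Sobolev spaces with respect to $g_0$. Summing the resulting estimates gives a bound of the form $|B_\mfkb^{\nabla}(u,w)| \le C\|u\|_{f_0H^m_{\nabla^{LC},\rho}}\|w\|_{f_0H^m_{\nabla^{LC},\rho}}$, which is the desired continuity.

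The main obstacle, and the only thing that requires care, is the bookkeeping of the weights: one must verify that the $\rho$-powers from \eqref{weight-classic-1}, from the volume factor $\rho^n$, and from the pairing defining $f_0^{-1}H^{-m}_{\nabla^{LC},\rho}$ via \eqref{eq.def.neg.weight.Sob} all combine correctly. Passing once to the $g_0$-picture, where admissibility of $\rho$ turns every weight-dependent object into a $\CIb(M,g_0)$ object, is what makes this bookkeeping transparent and reduces the weighted statement to the unweighted Proposition~\ref{prop.divergence.form}.
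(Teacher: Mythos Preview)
Your proposal is correct and reaches the same conclusion as the paper, but the paper's own proof is much more condensed: it simply says the result follows from Proposition~\ref{prop.divergence.form} and Corollary~\ref{diff-oper-weight}. That is, once Proposition~\ref{prop.divergence.form} yields $P \in \Diff_b^{2m}(M;E,F)$, the paper applies Corollary~\ref{diff-oper-weight} directly to the operator $P$ to obtain the weighted mapping property, without revisiting the Dirichlet form.

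Your route differs in that you work on the bilinear side: you use the duality definition~\eqref{eq.def.neg.weight.Sob} to reduce the mapping property to continuity of $B_\mfkb^{\nabla}$ on the weighted spaces, then pass to the $g_0$-picture via~\eqref{weight-classic-1} and estimate termwise using Proposition~\ref{prop-Sobolev-weight} and Corollary~\ref{diff-oper-weight}. This is more explicit about the weight bookkeeping (which you correctly flag as the only delicate point) and makes transparent why the factors $f_0$, $f_0^{-1}$, and $\rho^{n}$ from $\dvol_g = \rho^{n}\dvol_{g_0}$ fit together. The paper's approach is shorter because it treats $P$ as a black-box element of $\Diff_b^{2m}$ and invokes the already-proved weighted mapping property for such operators; your approach effectively re-derives that mapping property in this special case by going back to the variational formulation. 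Both are valid; yours is more self-contained, the paper's is more economical.
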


\begin{proof}
The result follows from Proposition \ref{prop.divergence.form} and
Corollary \ref{diff-oper-weight}.
\end{proof}

\def\cprime{$'$}

\end{document}